\tikzset{rndblock/.style={rounded corners,rectangle,draw,outer sep=0pt}}
\newcommand{\tframed}[2][]{\tikz[baseline=(h.base)]\node[rndblock,#1] (h) {#2};}
\newenvironment{Proof of}[1]{\textbf{Proof #1.}}{$\qquad \blacksquare$\par}
\DeclareMathOperator{\Aut}{Aut}
\DeclareMathOperator{\End}{End}
\DeclareMathOperator{\Ker}{Ker}
\DeclareMathOperator{\Inn}{Inn}
\newcommand{\B}{\mathcal B}
\newcommand{\X}{\widetilde X}
\newcommand{\TDelta}{\widetilde \Delta}
\newcommand{\p}{\varphi}
\newcommand{\al}{\alpha}
\newcommand{\K}{\mathcal{K}}
\newcommand{\C}{\mathbb C}
\newcommand{\R}{\mathbb R}
\newcommand{\Z}{\mathbb Z}
\newcommand{\N}{\mathbb N}
\newcommand{\T}{\mathbb T}
\newcommand{\supp}{\textrm{supp}\,}
\newtheorem{thm}{Theorem}[section]
\newtheorem{lem}[thm]{Lemma}
\newtheorem{prop}[thm]{Proposition}
\newtheorem{cor}[thm]{Corollary}
\theoremstyle{definition}
\newtheorem{defn}[thm]{Definition}
\newtheorem{ex}[thm]{Example}
\newtheorem{rem}[thm]{Remark}
\author[B. K. Kwa\'sniewski]{Bartosz Kosma  Kwa\'sniewski} 
 \address{Institute of Mathematics,  University  of Bia\l ystok\\
ul. K. Cio\l kowskiego 1M, 15-245 Bia\l ystok, Poland}
 \email{bartoszk@math.uwb.edu.pl}
\author[A. V. Lebedev]{Andrei Lebedev}   
 \address{Belorussian State University, Nesavisimosti av., 4, Minsk, Belarus;
Institute of Mathematics,  University  of Bia\l ystok\\
ul. K. Cio\l kowskiego 1M, 15-245 Bia\l ystok, Poland}
 \email{lebedev@bsu.by}
\title[Variational principles for spectral radius of
 weighted endomorphisms]{Variational principles for spectral radius of
 weighted endomorphisms of $C(X,D)$} 
\keywords{variational principle, spectral radius, endomorphism, Lyapunov exponent, cocycle}
\subjclass[2010]{47B48, 37A99 (primary), 37H15, 47A10 (secondary)} 
\thanks{The research leading to these
results has received funding from the European Union's
Seventh Framework Programme (FP7/2007-2013) under grant agreement number 621724. As well as  Polish  National Science Centre  grant number  DEC-2011/01/D/ST1/04112}
\begin{document}

\begin{abstract} 
We give formulas for the spectral radius of weighted endomorphisms $a\alpha: C(X,D)\to C(X,D)$, $a\in C(X,D)$, where $X$ is a compact Hausdorff space and $D$ is a unital Banach algebra.
Under the assumption that 
$\alpha$ generates a partial dynamical system $(X,\p)$, we  establish two kinds of variational principles for 
$r(a\alpha)$: using linear extensions of $(X,\p)$ and using Lyapunov exponents associated with ergodic measures for $(X,\p)$. 
This requires considering (twisted) cocycles over $(X,\p)$ with values in an arbitrary Banach algebra $D$, 
and thus our analysis can not be reduced to any of  mutliplicative ergodic theorems known so far.

The established variational principles apply not only to weighted endomorphisms but also to  a vast class of operators acting on Banach spaces that we call abstract weighted shifts
associated with $\alpha: C(X,D)\to C(X,D)$. 
 In particular,  they are far reaching generalizations of   formulas obtained by Kitover, Lebedev, Latushkin, Stepin and others.
They  are  most efficient  when $D=\B(F)$, for a Banach space $F$, and endomorphisms of $\B(F)$ induced by $\alpha$ are inner isometric. As a by product we obtain a  dynamical variational principle for an arbitrary  operator $b\in \B(F)$ and that
it's spectral radius is always a Lyapunov exponent in some direction $v\in F$, when $F$ is reflexive.

\end{abstract}

\maketitle
   
\setcounter{tocdepth}{1}
 
 \section*{Introduction }
  
	Let $\alpha:A\to A$ be an endomorphism of a Banach algebra $A$.
	The study of spectra of weighted endomorphisms $a\alpha:A\to A$,
	$a\in A$,   has a long tradition and is interesting in its own right, see, for instance,
	\cite{Kitover}, \cite{Kamowitz0}, \cite{Kamowitz}, \cite{Jamison},  where usually 
	the case when $A$ is commutative and/or $a\alpha$ is compact is considered. 
	Our interest in weighted endomorphisms stems from their relationship  with weighted composition operators. 
Spectral properties of such operators  play a crucial role in numerous problems in mathematical physics, ergodic theory, stochastic processes,  information theory, 		the theory of solvability  of functional differential equations, wavelet analysis  etc. We refer, for example,  to the books and survey articles \cite{Walters}, \cite{LatSt}, \cite{Lasota},  \cite{Anton_Lebed}, \cite{KrLit}, \cite{Anton},  \cite{ChLat}, \cite{AnBakhLeb}.
		
 In the case when the shift (the underlying map) is \emph{reversible},	
the theory of weighted composition operators was axiomatized in \cite{Anton_Lebed}.
Namely, let $A$ be a Banach subalgebra of the algebra $\B(E)$ of bounded linear operators acting on 
a Banach space $E$ and let $T\in \B(E)$ be an invertible isometry such that $TA T^{-1}=A$. 
Then   operators of the form $aT$, $a\in A$, are called   \emph{(abstract) weighted shift operators} with weight in the algebra $A$.
 Within this setting   the formula $\al (a) := TaT^{-1}$, $a\in A$, defines an automorphism $\al : A \to A$  of $A$. It turns out that all the fundamental spectral data concerning $aT$, $a\in A$, can be efficiently phrased and analyzed  in terms of  the noncommutative dynamical system $(A,\alpha)$, see \cite{Anton_Lebed}, \cite{Anton},  \cite{ChLat}, \cite{AnBakhLeb}. In particular, the spectral radii  of the weighted shift $aT\in \B(E)$ and the \emph{weighted automorphism}
 $a\alpha\in \B(A)$ coincide. 
When $A$ is a commutative uniform unital Banach algebra (i.e. the Gelfand transform $A\ni a \to \widehat{a} \in C(X)$ is an isometry) the spectral  radius is given by the following variational principle  (see   \cite{Kitover}, \cite{Lebedev79},  and 
\cite[4]{Anton_Lebed}  or \cite[5]{Anton}):
\begin{equation}
\label{e-varp-com-first}
 r(aT) =r(a\alpha)= \max_{\mu\in {\rm Inv} (X,\p)} \, \exp \int_{X} \ln|\widehat{a}(x)|\, d\mu= \max_{\mu\in {\rm Erg} (X,\p)} \, \exp \int_{X} \ln|\widehat{a}(x)|\, d\mu,
\end{equation}
where  $\varphi : X\to X$ is the homeomorphism dual to $\al$, ${\rm Inv}(X,\varphi)$ is the set of $\varphi$-invariant Borel  probability measures 
and  ${\rm Erg} (X,\p)\subseteq {\rm Inv}(X,\varphi)$ is the set of  $\p$-ergodic  measures. This covers, for instance, the situation when 
$E=L^p(X)$ or $E=C(X)$, $A$ is the algebra of operators of multiplication by functions in $C(X)$ and $T$ is an operator of composition  with a
 measure preserving homeomorphism $\p$.

We note the  resemblance of \eqref{e-varp-com-first} to the  Ruelle-Walters variational principle  \cite{Ruelle0, Ruelle89,  Walters}. The latter   
express the topological
pressure $P(\varphi, \ln|a|)$ for a continuous map $\varphi:X\to X$ and potential $\ln| a|:X\to \R\cup\{-\infty\}$, $a\in C(X)$, as 
\begin{equation}
\label{varp-topolo-pressure}
P(\varphi, \ln|a|)=\sup_{\mu\in {\rm Inv}(X,\varphi)} \left(\int_{{X}} \ln |a|\, d\mu +h_\mu(\varphi)\right),
\end{equation}
where   $h_\mu(\varphi)$ is Kolmogorov-Sinai entropy for $(X,\varphi,\mu)$. 
It covers as a particular case the Dinaburg-Goodman variational principle $h_{top}(\varphi)=\sup_{\mu\in {\rm Inv}(X,\varphi)} h_\mu(\varphi)$,
where  $h_{top}(\varphi)$ is the topological entropy of $\varphi$. Thus 
if it happens that $\varphi$ is a map of topological  entropy zero,
then \eqref{e-varp-com-first} and \eqref{varp-topolo-pressure} imply that 
the spectral exponent of $aT$ is equal to the topological pressure for $\varphi$ with potential $\ln| a|$:
$
\ln r(aT)=\ln r(a\alpha)=P(\varphi, \ln|a|).
$
We recall that  one of the main mathematical tools behind  thermodynamical formalism 
is that, in the case of topological Markov chains \cite{Ruelle0, Bowen, LatSt88}
 or, more generally, expanding local homeomorphisms \cite{Ruelle89}, topological pressure is equal to the spectral exponent of a transfer operator. 
For general irreversible dynamics that admits transfer operators, cf. \cite{kwa3}, their spectral analysis  leads to variational principles containing $t$-entropy \cite{AnBakhLeb11}.
For a comprehensive  account on  history and relationships  between the spectral analysis of weighted shift and transfer operators we refer to \cite{AnBakhLeb}.

In all the aforementioned variational principles  the algebra $A$ of weights is commutative.  
Latushkin and Stepin \cite{LatSt1} analyzed the case when $A=C(X,\B(H))$
 for a separable Hilbert space $H$ and  $\alpha:A\to A$ is given by a composition with a homeomorphism $\p:X\to X$. This 
models situation of weighted composition operators acting on vector-valued spaces $E=L^p(X, H)$.
 Under the assumption that $a\in C(X,\B(H))$ takes values in compact operators $\K(H)\subseteq \B(H)$, they proved
(see  also \cite{Anton_Lebed}  or \cite{ChLat})  that
\begin{equation}
\label{e-varp-com-second}
 \ln r(aT) =\ln r(a\alpha) = \sup_{\mu\in {\rm Erg} (X,\p)} \, \lambda_\mu,
\end{equation}
where $\lambda_\mu$ is the maximal Lyapunov exponent appearing in Ruelle's version of Multiplicative Ergodic Theorem \cite{Ruelle}
applied to the dynamical measure system $(X,\mu, \p)$ and the cocycle coming from $a:X\to \B(H)$.

The aim of the present paper is to   give a  detailed picture of the corresponding variational principles in a general \emph{irreversible}  situation and for more general \emph{non-commutative} algebras of weights. More specifically, we introduce and  initiate a study of abstract \emph{weighted shifts} $aT$, $a\in A$, \emph{associated with an endomorphism} $\alpha:A\to A$ (Definition~\ref{ban-weight-shift}). In our setting   $T\in\B(E)$ is a partial isometry on a Banach space $E$, as defined by~Mbekhta~\cite{Mbekhta}, and $\alpha(a)=TaS$, $a\in A$,  where $S\in\B(E)$ is a partial isometry  adjoint to $T$.
We note that any contractive endomorphism on a unital Banach algebra can represented in this form (Proposition~\ref{rep-delta-B}). Moreover, since  in this article  we focus on spectral radius,  our analysis 
boils down to the study of spectral radius of  the weighted endomorphism $a\alpha:A\to A$, as we always have $r(aT)=r(a\alpha)$ (see Proposition~\ref{first radius}). A number of concrete examples  of abstract weighted shifts associated with endomorphisms were considered in \cite{kwa-phd}, see also  \cite{kwa-leb08}, \cite{kwa-logist}. We plan to investigate their spectral properties in a forthcoming paper. 

In the present article we have established a number of variational principles (VPs in short). A general scheme of  relationships between them  is presented on Figure \ref{scheme_VPs}. 
\\

\begin{figure}[htb]
\begin{center} \begin{picture}(240,196)(5,-8)
\small
\put(0,183){\Ovalbox{\begin{minipage}{9cm}
\begin{center} \text{VP for $\lim\sup$ of empirical averages Theorem \ref{Var_princ_for_ergodic_sums}}\end{center}
\end{minipage}} }

\put(120,174){ \rotatebox{-90}{$\Longrightarrow$}} 
\put(-14.5,142){\footnotesize$\overbrace{\qquad\qquad \qquad\qquad\text{ VPs for spectral exponent }\qquad\qquad\qquad\qquad}^{}$}
 \put(-32,120){\Ovalbox{\begin{minipage}{11cm}\begin{center}
$\begin{array}{c| c}\,\,\text{VP using linear extension }\quad\,\, & \text{VP using Lyapunov exponents}\\ 
 \text{Theorem \ref{thm:varp-principle_for_cocycles}} & \text{Theorem  \ref{thm:variational_principle_Lyapunov}, Corollary \ref{cor:variational_principle_Lyapunov}}
\end{array}$
\end{center}
\end{minipage}} 
}

\put(59,105){ $\xymatrix{ \,\,   \ar@{=>}[dd]   \\    \,\,\,  \\ \,\, }$ }  

\put(159,105){ $\xymatrix{ \,\,   \ar@{=>}[dd]   \\    \,\,\,  \\ \,\, }$ }

\put(13,77){\scriptsize\tframed[fill=white!45]{$\begin{array}{c} r(aT)=r(a\alpha)=\lim_{n\to \infty}\|a\cdot\alpha(a)\cdot ...\cdot \alpha^n(a)\|^{1/n}
\\[2pt] \text{Proposition~\ref{first radius}}\end{array}$  }}

\put(9,35){$\overbrace{\qquad\quad\text{ VPs for weighted endomorphisms }\quad\qquad}^{}$}

 \put(17,5){\Ovalbox{\begin{minipage}{7.5cm}\begin{center}
$\begin{array}{l| l} A=C(X,\B(F))\quad   &\quad A=C(X,D) 
\\
\alpha_x\text{ inner isometric}\quad  &\quad \alpha_x \text{ arbitrary }\qquad
 \\
 \text{Theorems \ref{thm:varp-principle-TS},  \ref{spectral radius main theorem} } &\quad \text{Theorem \ref{thm:varp-principle}}
\end{array}$
\end{center}
\end{minipage}} 
}

  \end{picture} \end{center}
  \caption{Relationship between variational principles\label{scheme_VPs}}
 \end{figure}
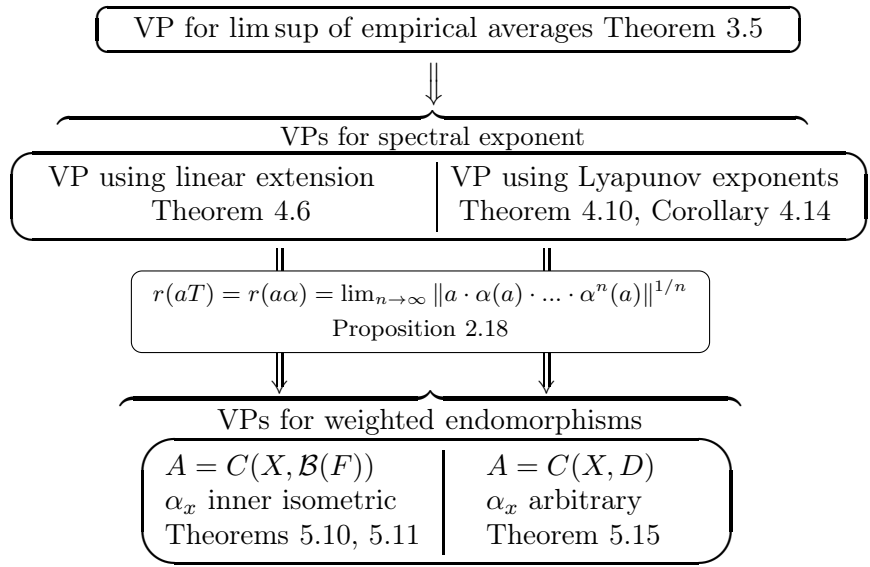
We start with preliminary Sections~\ref{s-end} and~\ref{s-weigh-sh}
where we discuss the necessary  objects and results concerning 
endomorphisms (i.e. irreversible and non-commutative dynamics) and weighted shift operators associated with endomorphisms. 
In contrast to reversible dynamics associated with automorphisms,
natural maps associated with endomorphisms are  \emph{partial  mappings}, i.e. continuous maps $\p:\Delta\to X$ 
defined on a subset $\Delta\subseteq X$ of a compact space $X$.\footnote{ 
This seemingly annoying technical detail turns out to be a friend in disguise, as when considering 'linear extensions' we need to deal with  partial maps anyway, and  we believe 
that keeping track of the domains is beneficial here.} In section \ref{erg-m-part}, we show that natural  ergodic measures  $\p:\Delta\to X$ are the usual measures for the restriction $\p:\Delta_\infty \to \Delta_\infty$ to an \emph{essential domain} of $\p$.
 Our main technical tool is what we call \emph{variational principle for $\lim\sup$ of empirical averages} over $(X,\varphi)$ (Theorem \ref{Var_princ_for_ergodic_sums}). It implies that for empirical averages the operations $\lim$ and $\sup$ do commute (Corollary~\ref{cor:ergodic_sums}). Moreover, it  readily  gives a generalization of formula \eqref{e-varp-com-first} to the case of weighted endomorphisms of a commutative uniform algebra $A$ (Theorem \ref{promień spektralny dla sumy}).

In order to deal with noncommutative Banach algebras, in Section~\ref{varp-gen-delta}, we study  Lypaunov exponents associated with an operator valued function $a:\Delta\to \B(F)$ and a partial dynamical system $(X,\p)$.
 We introduce \emph{spectral exponent} $\lambda(a,\p)$ which is equal to $\ln r(a\alpha)$ when $a\in C(X,\B(F))$ and $\alpha:C(X,\B(F))\to C(X,\B(F))$ is given by a composition with $\p$ (Definition \ref{def:spectral_exponent}). We construct a \emph{continuous linear extension} $(\X,\widetilde{\p})$ of $(X,\p)$ where $\X=X\times [B]$ and $[B]$ is a quotient of a unit ball in $F^*$.
Variational principle for $\lim\sup$ of empirical averages applied to $(\X,\widetilde{\p})$ implies  a generalization of \eqref{e-varp-com-first}  that expresses the spectral exponent $\lambda(a,\p)$ 
 in terms of  maximum of  integrals taken over the  extended system $(\X,\widetilde{\p})$ 
(Theorem \ref{thm:varp-principle_for_cocycles}).  
By projecting measures from $(\X,\widetilde{\p})$ to $(X,\p)$, the aforementioned generalization of  \eqref{e-varp-com-first}  implies a generalization of \eqref{e-varp-com-second}, 
which says that  the spectral exponent is the maximum of measure exponents  and it realizes as  Lypanuov exponent in a concrete direction when passing to dual space
(Theorem  \ref{thm:variational_principle_Lyapunov} and Corollary \ref{cor:variational_principle_Lyapunov}). 
 This result has a flavor of a variational principle for topological pressure. We note that  we achieved it without appealing to any of Mutliplicative Ergodic Theorems, cf. Remark \ref{rem:MET}. 
 
Finally, in Section \ref{sec:spectral_radius} we apply the aforementioned results to weighted endomorphisms  of $A=C(X,D)$ where $D$ is a unital Banach algebra. 
We   assume that the endomorphism satisfies $\alpha(C(X)\otimes 1)\subseteq \alpha(1)C(X)\otimes 1$, which  is equivalent to assuming that $\alpha$ is of the form
$$
\alpha(a)(x)=
\begin{cases}
\alpha_x\big(a(\p(x))\big),& x \in \Delta,\\
0,& x\notin \Delta,
\end{cases}
 \qquad  a\in C(X,D),
$$
where $(X,\p)$ is a partial dynamical system and $\{\alpha_x\}_{x\in \Delta}$ a continuous field of endomorphisms of \(D\). 
Thus not only the shift $\p$ but also the 'twist' $\{\alpha_x\}_{x\in \Delta}$ is involved. 
We show that the logarithm of spectral radius $r(a\alpha)$ is equal to a spectral exponent of a cocycle with values in  $\B(D)$.
This leads to analogues  of  \eqref{e-varp-com-first}, \eqref{e-varp-com-second} developed for general case (Theorem \ref{thm:varp-principle}).
The obtained formulas can be significantly improved when $D=\B(F)$ and all the endomorphisms $\{\alpha_x\}_{x\in \Delta}$  are isometric and inner. 
Then for any family  $\{T_x\}_{x\in \Delta}\subseteq \B(F)$ that implements endomorphisms $\{\alpha_x\}_{x\in \Delta}$, we may 
formulate generalizations of  \eqref{e-varp-com-first}, \eqref{e-varp-com-second} using cocycles associated to the function 
\begin{equation}\label{eq:discontinuous map}
\Delta \ni x \to  a(x)T_x\in \B(F),
\end{equation}
 see Theorems \ref{thm:varp-principle-TS} and  \ref{spectral radius main theorem}. One may view these last theorems as main results of the paper. 
When $\dim(F)<\infty$, they can be applied to any contractive endomorphism of $C(X,\B(F))$ 
(Remark \ref{finite-VP}). In essence, all the VPs in the paper can be deduced from these theorems. 
One of the main difficulties in proving them is that, in general, there are cohomological obstructions implying
 that \emph{the map \eqref{eq:discontinuous map} is discontinuous} 
(Remark \ref{rem:cohomological remark}). In fact, the special form of continuous linear extension $(\X,\widetilde{\p})$,
we constructed in Section~\ref{varp-gen-delta}, is dictated by the need of overcoming this difficulty.
En passant, we  mention that any operator $a\in \B(F)$ may be treated as an element in $C(\{x\},\B(F))$. Then our generalization  of \eqref{e-varp-com-first} gives a
'Dynamical Variational Principle'
 for the spectral radius of an \emph{arbitrary operator} $a\in B(F)$ (Theorem \ref{arb-VP}), and  our generalization  of \eqref{e-varp-com-second}  gives an  improvement of the classical Gelfand's formula (Corollary~\ref{cor-Gelf}).

Section \ref{sec:final} contains a brief  summary of  our result and their potential applications.


 \section{Endomorphisms of Banach algebras}
\label{s-end}

A general (irreversible) dynamics on non-commutative structures  (algebras) is naturally given by means of endomorphisms. In this  section we discuss the corresponding objects and facts that will be used in our further analysis.

\subsection{Endomorphisms of commutative algebras and partial maps}\label{endomorfizmy_algebr_Banacha1}

Let  $A$ be a  \emph{commutative} Banach algebra with an identity.\label{algebra A poczatek} Recall that the space  $X$ of non-zero linear multiplicative functionals on  $A$ equipped with  $^*$-weak topology (induced from the dual space $A^*$) is a compact Hausdorff. It is  called the  \emph{maximal ideals space} of $A$, or the  (Gelfand) \emph{spectrum} of $A$.  The \emph{Gelfand transform} is the homomorphism: 
$$
A\ni a \to \widehat{a} \in C(X),\qquad 
\widehat{a}(x) :=x(a),
$$
where  $C(X)$ is the algebra of all complex valued continuous functions on  $X$.
 Recall that  $A$ is called  
 \emph{semisimple} if the Gelfand transform is a monomorphism, that is when the  \emph{radical}  $R(A):=\bigcap_{x\in X} \Ker x$  of $A$ is zero. We say that    $A$ is  \emph{regular} if  it  is semisimple and the functions \(\widehat{a}\), \(a\in A\),
separate points from closed sets in \(X\); that is for each point   $x\in X$ and 
	 a closed subset  $F\subseteq X$ such that \(x\notin F\) there exists an element  $a\in A$ such that 
 $
 \widehat{a}(x)\neq 0$ and $\widehat{a}(F)=0$. 
\smallskip


Let  $\alpha:A \to A$ be an endomorphism. Then for every  $x\in X$ the functional   $x\circ \alpha$ is linear and multiplicative.  
Thus the dual operator to  $\alpha$ defines the map $\p$ from  $X$  to the set  $X\cup\{ 0\}$ 
(the functional  $x\circ \alpha$ may be zero). Note that the restriction of this map to $\Delta=\p^{-1}(X)$ 
takes values in  $X$ and therefore it can be 
considered a partial map on $X$.  
\begin{prop}\label{definicja alfa indukowanego} 
For any endomorphism   $\alpha:A\to A$ of a commutative Banach algebra $A$  
there is a uniquely determined continuous map  $\p:\Delta\to X$ defined on a clopen (closed and open) subset   $\Delta\subseteq X$ such that   
\begin{equation}\label{czesciowe odwz i endomor}
\widehat{\alpha(a)}(x)=\begin{cases}
\widehat{a}(\p(x)), & x\in \Delta\\
0, & x\notin \Delta
\end{cases},\qquad a\in A.
\end{equation}
Moreover,  $\Delta=X$ iff  $\alpha$ preserves the identity of algebra  $A$.
 \end{prop}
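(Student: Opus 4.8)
The plan is to read the pair $(\Delta,\p)$ directly off the dual of $\alpha$. For each $x\in X$ the composition $x\circ\alpha$ is linear and satisfies $(x\circ\alpha)(ab)=x(\alpha(a))\,x(\alpha(b))=(x\circ\alpha)(a)\,(x\circ\alpha)(b)$, so it is multiplicative and hence lies in $X\cup\{0\}$. I would therefore set
\[
\Delta:=\{x\in X: x\circ\alpha\neq 0\},\qquad \p(x):=x\circ\alpha\quad(x\in\Delta),
\]
so that \eqref{czesciowe odwz i endomor} holds by inspection: for $x\in\Delta$ we get $\widehat{\alpha(a)}(x)=x(\alpha(a))=(x\circ\alpha)(a)=\widehat{a}(\p(x))$, while for $x\notin\Delta$ the functional $x\circ\alpha$ vanishes and $\widehat{\alpha(a)}(x)=0$. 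What remains is to show that $\Delta$ is clopen, that $\p$ is continuous, that $(\Delta,\p)$ is unique, and to characterize the case $\Delta=X$.

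The structural key is that $p:=\alpha(1)$ is idempotent, $p^2=\alpha(1)^2=\alpha(1)=p$, so $\widehat{p}\in C(X)$ takes only the values $0$ and $1$. Since every element of $X$ sends the identity to $1$, for $x\in\Delta$ we have $\widehat{p}(x)=\p(x)(1)=1$, and for $x\notin\Delta$ we have $\widehat{p}(x)=(x\circ\alpha)(1)=0$; hence $\Delta=\widehat{p}^{\,-1}(\{1\})$, which is simultaneously open and closed because $\widehat{p}$ is a continuous $\{0,1\}$-valued function. For continuity of $\p$ I would invoke that the Gelfand topology on $X$ is the initial topology for the family $\{\widehat{a}:a\in A\}$, so $\p:\Delta\to X$ is continuous exactly because each composition $\widehat{a}\circ\p$ is the restriction to $\Delta$ of the continuous function $\widehat{\alpha(a)}$.

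Uniqueness would follow from \eqref{czesciowe odwz i endomor} together with the fact that the functions $\widehat{a}$, $a\in A$, separate the points of $X$. If $(\Delta',\p')$ is another such pair, then testing \eqref{czesciowe odwz i endomor} at $a=1$ shows $\Delta'=\{x:\widehat{\alpha(1)}(x)\neq 0\}=\Delta$, and for $x\in\Delta$ the equalities $\widehat{a}(\p'(x))=\widehat{\alpha(a)}(x)=\widehat{a}(\p(x))$ for all $a$ force $\p'(x)=\p(x)$.

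For the identity statement, the easy direction is clear: if $\alpha(1)=1$ then $\widehat{p}\equiv\widehat{1}\equiv 1$, whence $\Delta=X$. The converse is the point I expect to be the main obstacle, since one must not assume $A$ \emph{semisimple}. Here $\Delta=X$ means $\widehat{p}\equiv 1$, i.e.\ $\widehat{1-p}\equiv 0$, so $1-p\in R(A)=\bigcap_{x\in X}\Ker x$. The element $1-p$ is again idempotent, and the crucial remark is that an idempotent $f$ in the radical must vanish: since $f^n=f$ for every $n$, its spectral radius equals $\lim_n\|f^n\|^{1/n}=\lim_n\|f\|^{1/n}$, which is strictly positive unless $f=0$, while radical elements are quasinilpotent (for a commutative Banach algebra $R(A)$ consists exactly of the elements of zero spectral radius). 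Thus $1-p=0$, that is $\alpha(1)=1$, and it is precisely this idempotent-in-the-radical argument that yields the equivalence for an arbitrary unital commutative Banach algebra.
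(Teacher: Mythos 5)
Your proposal is correct, and while it shares the paper's overall skeleton (define $\p$ as the dual of $\alpha$, get uniqueness from point separation, read clopen-ness off the idempotent $\widehat{\alpha(1)}$), it diverges genuinely in the two places where the paper leans on external machinery. For continuity of $\p$, the paper worries that $\alpha$ need not be continuous and therefore passes to the quotient $A/R(A)$, identifies its spectrum with $X$, and invokes automatic continuity of homomorphisms into semisimple commutative Banach algebras (\.Zelazko, Thm.~13.2) to transfer weak-$*$ continuity to the dual map. You instead observe that the Gelfand topology on $X$ is the initial topology for the family $\{\widehat{a}\}_{a\in A}$, so continuity of $\p$ is equivalent to continuity of each $\widehat{a}\circ\p=\widehat{\alpha(a)}\big|_\Delta$, which holds because $\alpha(a)$ is an honest element of $A$ whatever the continuity status of $\alpha$. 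This is more elementary and self-contained, and it shows the detour through $A/R(A)$ is unnecessary. For the equivalence $\Delta=X\iff\alpha(1)=1$, the paper cites \.Zelazko, whereas you supply the argument: $1-\alpha(1)$ is an idempotent in the radical, and a nonzero idempotent has spectral radius $1$ while radical elements of a commutative Banach algebra are quasinilpotent, so $1-\alpha(1)=0$. Both of your substitutions are sound; what the paper's route buys is brevity by citation, and what yours buys is a proof readable without the semisimplicity apparatus.
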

 \begin{proof} 
Uniqueness  of  $\p:\Delta\to X$ follows from  that the functions  $\widehat{a}$, $a\in A$,
 separate points of  $X$. 
Since   $\widehat{\alpha(1)}$ is the characteristic function of the set  $\Delta$, it follows that $\Delta$
is clopen and it is equal to $X$ if and only if $\alpha(1)=1$, cf.  \cite[20]{Zelazko}.  Thus we have only to verify the continuity of  $\p$ 
(note that we do not presuppose the continuity of  $\alpha$). If  $A$ is a semisimple,  then    $\alpha$ is automatically  continuous \cite[Corollary 13.4]{Zelazko}. In  general  the spectrum of quotient algebra  $A/ R(A)$, can be naturally identified with  $X$. Therefore the dual map to the homomorphism 
 $\widetilde{\alpha}: A \to A/R(A)$  given by 
 $
 \widetilde{\alpha}(a):= a + R(A)$, $a\in A$,
coincides with  the dual map to  $\alpha$. So the continuity of   $\widetilde{\alpha}$, \cite[Theorem~13.2]{Zelazko}, implies the continuity of  $\p$.   
\end{proof}
\begin{defn}\label{odwzorowania czesciowe label} 
We will call the map  $\p:\Delta\to X$ satisfying  \eqref{czesciowe odwz i endomor} 
the \emph{partial map dual to the endomorphism} $\alpha$. 
In general, by a \emph{partial dynamical system} we mean  a pair  $(X,\p)$  where $X$ is compact Hausdorff space
 and $\p:\Delta\to X$ is a continuous map defined on an open set  \(\Delta \subseteq X\).
\end{defn}
   
\subsection{Endomorphisms of $C(X,D)$}
Let \(A:=C(X,D)\) be the Banach algebra of continuous functions defined on a compact Hausdorff space  $X$ and taking values in   a   Banach algebra $D$. 
Any endomorphism $\alpha:A\to A$   gives rise to a
  family of endomorphisms $\{\alpha_x\}_{x\in X}$ of $D$ where 
\begin{equation}\label{pole generation}
\alpha_x(a):=\alpha(1\otimes a)(x), \qquad a\in D,\,\,  x\in X.
\end{equation}
 Moreover, since $\alpha(1\otimes a)\in C(X,D)$,  the mapping $X\ni x\mapsto \alpha_x(a) \in D$ is continuous for every  $a \in D$. 
 We will call $\{\alpha_x\}_{x\in X}$ the \emph{continuous field of endomorphisms of \(D\)  generated by the endomorphism $\alpha$}.
More generally, by a \emph{continuous field of endomorphism} of $D$ on $X$ we  mean any  family  $\{\alpha_x\}_{x\in X} \subseteq \End (D)$ such that  $X\ni x\mapsto \alpha_x(a) \in D$ is continuous for every  $a \in D$. 
Note that for any continuous field $\{\alpha_x\}_{x\in X} \subseteq \End (D)$ 
 the set 
\begin{equation}\label{pole generation domain}
\Delta :=\{x\in X: \alpha_x\neq 0\}
\end{equation}
is open in $X$. If, in addition, \(D\) is unital, then \(A\) is unital and \(\Delta\)   is clopen, as  we have \(\Delta=\{x\in X:\
\|\alpha(1)(x)\|=1\}\). 
\begin{prop}\label{tensor podstawowe}
Let $D$ be a Banach algebra  and let $A=C(X,D)$.  For any continuous partial map $\p:\Delta \rightarrow X$ 
 (defined on a clopen set $\Delta\subseteq X$) and any continuous field of non-zero endomorphisms $\{\alpha_x\}_{x\in \Delta}$,   the formula
 \begin{equation}\label{form of endomorphism}
\alpha(a)(x)=
\begin{cases}
\alpha_x\big(a(\p(x))\big),& x \in \Delta,\\
0,& x\notin \Delta,
\end{cases}
 \qquad  a\in A,
\end{equation}
defines an   endomorphism $\alpha$ of the algebra $A$. Formula \eqref{form of endomorphism} determines both the field of endomorphisms $\{\alpha_x\}_{x\in \Delta}$ and  the partial dynamical system
$(X,\p)$ uniquely.

Moreover, if $D$ is unital  then an arbitrary  endomorphism $\alpha$ of $A$ is of the form  \eqref{form of endomorphism} if and only if 
\begin{equation}\label{prawie zachowuje centrum}
\alpha(C(X)\otimes 1)\subseteq (C(X)\otimes 1) \cdot \alpha(1),
\end{equation}
that is,  when $\alpha$ "almost invariates" the algebra $C(X)\otimes 1$.
\end{prop}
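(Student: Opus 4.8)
The plan is to handle the three assertions in turn, the converse in the last one being the substantial point.

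For the first assertion I would fix $a\in A=C(X,D)$ and check that the function defined by \eqref{form of endomorphism} lies in $C(X,D)$. On the clopen complement of $\Delta$ it is identically $0$, so by clopenness it suffices to prove continuity on $\Delta$ of $x\mapsto \alpha_x\big(a(\p(x))\big)$. Fixing $x_0\in\Delta$, I would split $\alpha_x\big(a(\p(x))\big)-\alpha_{x_0}\big(a(\p(x_0))\big)$ into $\alpha_x\big(a(\p(x))-a(\p(x_0))\big)$ plus $\big(\alpha_x-\alpha_{x_0}\big)\big(a(\p(x_0))\big)$; the second term tends to $0$ by continuity of the field at the fixed element $a(\p(x_0))$, while the first is controlled once $\sup\|\alpha_x\|$ is finite on a neighbourhood of $x_0$. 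This local uniform bound is the one genuinely analytic point, and it follows from the uniform boundedness principle, since pointwise continuity of $x\mapsto\alpha_x(b)$ makes $\{\alpha_x\}$ pointwise bounded on compacta. Granting this, $\alpha(a)\in C(X,D)$; linearity together with the fibrewise identity $\alpha_x\big((ab)(\p(x))\big)=\alpha_x\big(a(\p(x))\big)\,\alpha_x\big(b(\p(x))\big)$ then show that $\alpha$ is a homomorphism, with $\|\alpha\|\le\sup_{x\in\Delta}\|\alpha_x\|$.

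For uniqueness I would recover both pieces of data from $\alpha$ directly. Evaluating \eqref{form of endomorphism} on a constant function $1\otimes a$ gives $\alpha(1\otimes a)(x)=\alpha_x(a)$ for $x\in\Delta$, which is exactly \eqref{pole generation}; hence the field is forced and $\Delta=\{x:\alpha_x\neq0\}$ as in \eqref{pole generation domain}. To recover $\p$, I would evaluate on $f\otimes a$, obtaining $\alpha(f\otimes a)(x)=f(\p(x))\,\alpha_x(a)$ for $x\in\Delta$. Choosing, for a given $x\in\Delta$, an element $a$ with $\alpha_x(a)\neq0$ (possible since $\alpha_x\neq0$), the scalar $f(\p(x))$ is determined for every $f\in C(X)$, and as $C(X)$ separates the points of the compact Hausdorff space $X$ this pins down $\p(x)$ uniquely.

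For the last assertion assume $D$ unital. The implication ``$\Rightarrow$'' is a direct computation: for $\alpha$ of the form \eqref{form of endomorphism} one has $\alpha(f\otimes1)(x)=f(\p(x))\,\alpha(1)(x)$ on $\Delta$ and $0$ off it, so $\alpha(f\otimes1)=(g\otimes1)\,\alpha(1)$ with $g:=(f\circ\p)\cdot\chi_\Delta\in C(X)$ (continuous because $\Delta$ is clopen), which is \eqref{prawie zachowuje centrum}. The core is the converse. Here $\{\alpha_x\}$ is always defined by \eqref{pole generation} and is a continuous field of endomorphisms of $D$ with clopen $\Delta$; what \eqref{prawie zachowuje centrum} buys is the shift $\p$. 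For each $f\in C(X)$ the hypothesis furnishes $g\in C(X)$ with $\alpha(f\otimes1)=(g\otimes1)\alpha(1)$; since $\alpha(1)(x)$ is a non-zero idempotent for $x\in\Delta$ (as $\alpha(1)^2=\alpha(1)$), the values $g(x)$, $x\in\Delta$, are uniquely determined, so setting $\beta(f):=g\cdot\chi_\Delta$ gives a well-defined map $\beta:C(X)\to C(X)$. Using that $\alpha(1)$ is idempotent and is the unit of $\alpha(A)$, and that $C(X)\otimes1$ is central in $A$, I would check that $\beta$ is an algebra homomorphism with $\beta(1)=\chi_\Delta$; then Proposition~\ref{definicja alfa indukowanego}, applied to the commutative algebra $C(X)$ whose spectrum is $X$, produces a continuous partial map $\p:\Delta\to X$ with $\beta(f)(x)=f(\p(x))$ on $\Delta$. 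Finally, since $f\otimes d=(f\otimes1)(1\otimes d)$, a fibrewise computation gives $\alpha(f\otimes d)(x)=f(\p(x))\,\alpha(1)(x)\,\alpha_x(d)=f(\p(x))\,\alpha_x(d)=\alpha_x\big((f\otimes d)(\p(x))\big)$ on $\Delta$, so $\alpha$ agrees with the endomorphism defined by \eqref{form of endomorphism} on the subalgebra $C(X)\otimes D$; as the latter is dense in $C(X,D)$ and both maps are continuous, they coincide. The main obstacle I anticipate is the bookkeeping around $\alpha(1)$: extracting the scalar cocycle $\beta(f)$ continuously and canonically from the idempotent-valued relation $\alpha(f\otimes1)=(\beta(f)\otimes1)\alpha(1)$, and confirming that the resulting $\beta$ is genuinely multiplicative rather than merely linear, is where unitality of $D$ and clopenness of $\Delta$ are essential.
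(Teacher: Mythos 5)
Your proposal is correct and follows essentially the same route as the paper: well-definedness of $\alpha(a)$ via joint continuity of $(x,a)\mapsto\alpha_x(a)$, recovery of the field and of $\p$ from values on elementary tensors for uniqueness, and, for the converse, extraction of a scalar endomorphism of $C(X)$ from the relation $\alpha(f\otimes 1)=(g\otimes 1)\alpha(1)$ followed by an appeal to Proposition~\ref{definicja alfa indukowanego} and a density argument on $C(X)\otimes D$. Your explicit invocation of the uniform boundedness principle just fills in a detail the paper compresses into the phrase ``Lipschitz property of bounded linear maps.''
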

\begin{proof}
We adopt the proof of 
\cite[Proposition 3.5]{kwa-pure}. It is obvious that the map given by \eqref{form of endomorphism} is multiplicative and linear. 
It is well defined as using the Lipschitz property of bounded linear maps we get that the map $X \times A\ni (x,a)\longmapsto  \alpha_{x}(a) \in D $ is continuous for every continuous field of endomorphisms.  
If \(\alpha\) is given by \eqref{form of endomorphism}, then both the field of endomorphisms and the set \(\Delta\) are determined by \(\alpha\) via formulae  \eqref{pole generation},  \eqref{pole generation domain}.
Now if we assume that \(\alpha\) satisfies \eqref{form of endomorphism} with \(\p\) replaced with
a different map \(\p':\Delta \to X\), then there is \(x\in \Delta\) such that \(\p(x)\neq \p'(x)\).
Take \(a\in C(X)\) such that \(a(\p(x))=1\) and  \(a(\p'(x))=0\), and let  \(b\in D\setminus \ker\alpha_x\).  On one hand we get
\(\alpha(a\otimes b)(x)=a(\p(x))\alpha_x(b)=\alpha_x(b)\neq 0\), while
  on the other \(\alpha(a\otimes b)(x)=a(\p'(x))\alpha_x(b)= 0\), a contradiction.  

Assume that $D$ is unital and $\alpha$ satisfies \eqref{prawie zachowuje centrum}. Then  for every $a\in C(X)$ there exists $a'\in C(X)$ such that  
 $$
 \al(a\otimes 1)(x)= a'(x) \al_x(1),\qquad x\in X.
 $$
Clearly, the function  $a'$ is uniquely determined  by $a$ on the  set 
$\Delta:=\{x\in X: \al(A)(x)\neq 0\}=\{x\in X: \al_x(1)\neq 0\}$. Since  $\|\alpha_x(1)\|\in \{0\}\cup [1,+\infty)$  (as a norm of an idempotent),  the set $\Delta$ is open and compact. Now it is straightforward to see that  the formula
   $
   \Phi(a)=a'|_\Delta
   $ 
defines an endomorphism $\Phi:C(X)\to C(\Delta)\subseteq C(X)$ whose range is $C(\Delta)$. 
 Hence $\Phi$ generates a partial map \(\p:\Delta \to X\), see Proposition \ref{definicja alfa indukowanego}.
For every \(a\in C(X)\) and \(b\in D\) we have 
\[\alpha(a\otimes b)(x)=\alpha(a\otimes 1) \alpha(1\otimes b)(x)=\Phi(a)(x)\alpha_x(b)=a(\p(x))\alpha_x(b)=\alpha_x(a\otimes b(\p(x))).
\]
Thus \(\alpha\) satisfies \eqref{prawie zachowuje centrum}, by continuity and linearity.
\end{proof}
 
In the nomenclature of \cite[Definition 3.3]{kwa-pure},  an endomorphism $\alpha : C(X,D) \to  C(X,D)$ satisfying  \eqref{form of endomorphism} is said to be induced by a morphism $(\p,\{\al_{x}\}_{x\in \Delta})$ of the corresponding bundle of algebras.
In accordance with Definition \ref{odwzorowania czesciowe label} we adopt the following: 
\begin{defn}\label{def:endomorphism_generate_partial_dynamical_system}
If $\alpha:C(X,D)\to C(X,D)$ is an endomorphism of the form  \eqref{form of endomorphism} we say that $\alpha$   \emph{generates the partial dynamical system $(X,\p)$}. 
\end{defn}
We will show that when $D=B(F)$ for a Banach space $F$ and the endomorphisms in the associated field are inner, then
$\alpha$ generates a partial dynamical system (see Proposition \ref{zapowiedziane twierdzenie} below). In particular, if $\dim(F)<\infty$, then every endomorphism of $C(X,\B(F))$
generates a partial dynamical system. In the infinite dimensional case,  even when $F=H$ is a Hilbert space, there are $*$-endomorphisms of  $C(X,\B(H))$ that do not generate partial dynamical systems in the sense of Definition \ref{def:endomorphism_generate_partial_dynamical_system}:
\begin{ex}
\label{endomor nie bedacy postaci}
Let $H$ be a Hilbert space and let $V_1$,{\dots}, $V_n \subseteq \B(H)$, $n>1$, be isometries with   orthogonal ranges:
 $ V^*_i V_i=1$,  $V_i^*V_j=0$, $i\neq j$. Let $X=\{x_1,x_2,{\dots},x_n\}$ be discrete space and consider  $\alpha:C(X,\B(H))\to C(X,\B(H))$ given by 
 $$
 \alpha(a)(x)=  \sum_{i=1}^n  V_i a(x_i)V_i^*, \qquad a \in C(X,\B(H)),\, x\in X.
$$
One readily sees that $\alpha$ is a  $*$-endomorphism  of $C(X,\B(H))$ that does not satisfy   \eqref{prawie zachowuje centrum}. 
\end{ex}

\subsection{Endomorphisms of $C(X,\B(F))$ generating  inner fields of endomorphisms}
\label{endom-facts}
Throughout this subsection we fix a Banach space $F$.
\begin{defn}
\label{d-inner}
An endomorphism $\alpha:\B(F)\to \B(F)$ is \emph{inner} if there  are  $T,S\in \B(F)$ such that 
$
TaS=\alpha(a) \text{ for all }a\in \B(F).
$
\end{defn}
\begin{prop}\label{prop:on_inner_endo} Suppose that  $\alpha:\B(F)\to \B(F)$ is an inner endomorphism and $T,S\in \B(F)$ are such that 
$
TaS=\alpha(a)$  for all $a\in \B(F)$.
\begin{enumerate}
\item $\alpha$ is injective if and only if $ST=1$;
\item if $\alpha$ is contractive then $\alpha$ is isometric if and only if $ST=1$ and  up to normalization $T$ is an isometry 
(i.e. for $\lambda=\|T\|^{-1}$,  we have that $\lambda T$ is  an isometry, and  $\alpha (\cdot ) = \lambda T (\cdot )\lambda^{-1}S$).
\end{enumerate}
\end{prop}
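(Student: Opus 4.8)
The plan is to exploit the endomorphism property $\alpha(ab)=\alpha(a)\alpha(b)$, which in terms of $T,S$ reads $TabS=Ta(ST)bS$ for all $a,b\in\B(F)$, or equivalently
\[
Ta(1-ST)bS=0,\qquad a,b\in\B(F).
\]
For part (1), the "only if" direction follows by feeding $a(1-ST)b$ into $\alpha$: since $\alpha\big(a(1-ST)b\big)=Ta(1-ST)bS=0$, injectivity forces $a(1-ST)b=0$ for all $a,b$, and taking $a=b=1$ gives $ST=1$. For the "if" direction I would record the recovery identity that $ST=1$ produces, namely $a=(ST)a(ST)=S(TaS)T=S\alpha(a)T$ for every $a$; then $\alpha(a)=0$ immediately yields $a=0$. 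This identity is the workhorse for part (2) as well.

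For part (2) I first note that an isometric $\alpha$ is injective, so part (1) already supplies $ST=1$. The heart of the matter is to test the isometry and contractivity conditions on rank-one operators. Writing $\xi\otimes f$ for the operator $x\mapsto f(x)\xi$ (with $\xi\in F$, $f\in F^*$), one computes $T(\xi\otimes f)S=(T\xi)\otimes(S^*f)$, whence $\|\alpha(\xi\otimes f)\|=\|T\xi\|\,\|S^*f\|$ while $\|\xi\otimes f\|=\|\xi\|\,\|f\|$. If $\alpha$ is isometric these are equal for all $\xi,f$, and separating the variables forces $\|T\xi\|=c\|\xi\|$ for a constant $c>0$ (and correspondingly $\|S^*f\|=c^{-1}\|f\|$). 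Thus $c=\|T\|$, so $\lambda T$ with $\lambda=\|T\|^{-1}$ is an isometry, and rewriting $\alpha(\cdot)=\lambda T(\cdot)\lambda^{-1}S$ gives the stated form.

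Conversely, assume $\alpha$ is contractive, $ST=1$ and $\lambda T$ is an isometry. Evaluating contractivity on rank-ones gives $\|T\xi\|\,\|S^*f\|\le\|\xi\|\,\|f\|$; since $\|T\xi\|=\|T\|\,\|\xi\|$, this yields $\|S^*f\|\le\|T\|^{-1}\|f\|$, that is $\|S\|\le\lambda$ and hence $\|S\|\,\|T\|\le 1$. Now the recovery identity $a=S\alpha(a)T$ gives $\|a\|\le\|S\|\,\|T\|\,\|\alpha(a)\|\le\|\alpha(a)\|$, which together with contractivity $\|\alpha(a)\|\le\|a\|$ forces $\|\alpha(a)\|=\|a\|$. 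I expect this reverse direction to be the main obstacle: it is the only place where contractivity genuinely enters, and the subtle point is that the bound $\|S\|\,\|T\|\le 1$ cannot be read off from $\lambda T$ being an isometry and $ST=1$ alone but must be extracted from contractivity (again via rank-one operators), after which the recovery identity closes the argument.
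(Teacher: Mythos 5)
Your proof is correct, but it diverges from the paper's in two places worth noting. For the forward direction of (1), the paper argues by contraposition: assuming $ST\neq 1$ it shows $ST$ is not a scalar, picks $x$ with $x, STx$ linearly independent, and uses Hahn--Banach to build a nonzero rank-one idempotent $\Theta_{x,f}$ with $\alpha(\Theta_{x,f})=\Theta_{Tx,S^*f}=0$. Your route is purely algebraic: multiplicativity gives $Ta(1-ST)bS=0$ for all $a,b$, so injectivity applied to $a(1-ST)b$ with $a=b=1$ yields $ST=1$ directly. This is shorter and avoids Hahn--Banach entirely; the paper's construction has the side benefit of exhibiting an explicit nonzero element of $\ker\alpha$. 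For the forward direction of (2) both arguments are essentially the same rank-one computation ($\|\alpha(\Theta_{\xi,f})\|=\|T\xi\|\,\|S^*f\|$, separate variables). For the converse of (2) the paper normalizes so that $T$ is an isometry and then computes $\|a\|=\|Ta\|=\|TaST\|=\|\alpha(a)T\|\le\|\alpha(a)\|\le\|a\|$, which cleverly never requires any bound on $\|S\|$; you instead extract $\|S\|=\|S^*\|\le\|T\|^{-1}$ from contractivity tested on rank-ones and then close via the recovery identity $a=S\alpha(a)T$. Both are valid; your diagnosis that $\|S\|\,\|T\|\le 1$ cannot follow from $ST=1$ and the isometry of $\lambda T$ alone is accurate, but the paper shows that this bound is in fact dispensable, since inserting $ST=1$ inside the norm sidesteps $S$ altogether.
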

\begin{proof}
For $x\in F$ and $f\in  F^*$ we denote by $\Theta_{x,f}\in \B(F)$ the corresponding rank one operator, given by the 
formula $\Theta_{x,f}(y)=f(y)x$, $y\in F$.

(1). If $ST=1$, then $S\alpha(a)T=STa ST=a$ for all $a\in \B(F)$, and hence $\alpha$ is injective.

Conversely, assume that $ST\neq 1$. If $\alpha(1)=0$, then $\alpha$ is not injecitve. Thus we may assume that $\alpha(1)\neq 0$. Then 
$ST \neq \gamma 1$, for every $\gamma \in \C$. Indeed, if $ST = \gamma 1$, then
$$
 \alpha (1)=\alpha (1)^2=TSTS=  \alpha (ST) =\alpha (\gamma 1)    =  \gamma \alpha(1)
$$
 implies that $\gamma =1$, which  contradicts  $ST \neq \gamma 1$. Thus there exists  $x\in F$ such that $STx$ and $x$ are linearly
independent. By Hahn-Banach theorem there exists  $f\in F^*$  such that 
$f(x)= 1$ and $f(STx)=0$. Then on one hand $\Theta_{x,f}$ is a  projection onto the space spanned by $x\neq 0 $. 
On the other hand, $\alpha(\Theta_{x,f})=\Theta_{Tx,S^*f}$  is an idempotent  (since  $\Theta_{x,f}$ is an idempotent) and 
its  range is contained in the space spanned by $Tx$. However, $\Theta_{Tx,S^*f} Tx=f(STx) Tx=0$. Hence $\alpha(\Theta_{x,f})=0$ and therfore $\alpha$ fails to be injective.

(2). Let $\alpha$ be contractive. If $ST=1$ and $T$ is an isometry, 
then for every $a\in \B(F)$ we have 
$$
\|a\|=\|Ta\|=\|T a ST\|= \|\alpha(a)T\|\le\|\alpha(a)\|\le \| a\|\,.
$$
 Hence $\alpha$ is isometric. 
 
 Conversely, 
assume that $\alpha$ is isometric.  Then $ST=1$ by part (1). Let us  take any functional $f\in F^*$   of norm $1$. 
Note that $S^*f\neq 0$ since  $S$ is surjective. Now for each $h\in F$ we get  
$$
\|Th\|=\|\Theta_{Th,f}\|= \frac{\|\Theta_{Th,S^*f}\| }{\|S^*f\|}=\frac{\|\alpha(\Theta_{h,f})\|}{\|S^*f\|}
=\frac{\|\Theta_{h,f}\|}{\|S^*f\|}=\frac{\|h\|}{\|S^*f\|}.
$$ 
Thus replacing $S,T$ with $\frac{1}{\|S^*f\|}S$, $\|S^*f\|T$, we may assume that $T$ is an isometry.
\end{proof}
\begin{prop}\label{prop:inner_injective_endo} For any endomorphism  $\alpha:\B(F)\to \B(F)$ the following statements are equivalent:
\begin{enumerate}
\item $\alpha$ is injective and inner;
\item $\alpha(\B(F))=\alpha(1)\B(F)\alpha(1)$ and there is an injective $T\in \B(F)$ such that 
$$
Ta=\alpha(a)T, \qquad \text{ for every }a\in \B(F).
$$
\end{enumerate}
Moreover, if  (2) holds then there exists a unique $S\in \B(F)$ such that $\alpha(a)=TaS$ for all $a\in \B(F)$.  If $\alpha$ is isometric, we may choose $T$ to be isometry and then  $\|S\|=1$.
\end{prop}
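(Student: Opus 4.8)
The plan is to prove the two implications separately and then dispatch the uniqueness and isometric clauses, throughout writing $e:=\alpha(1)$, which is an idempotent since $\alpha$ is a homomorphism.

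\textbf{Implication (1)$\Rightarrow$(2).} Assume $\alpha$ is injective and inner, say $\alpha(a)=TaS$. Proposition~\ref{prop:on_inner_endo}(1) then forces $ST=1$, so $S$ is a left inverse of $T$ and $T$ is automatically injective. The same $T$ does the intertwining, since $\alpha(a)T=TaST=Ta$. For the range identity I would argue both inclusions from idempotency of $e$: the forward inclusion is $\alpha(a)=\alpha(1\cdot a\cdot 1)=e\,\alpha(a)\,e\in e\B(F)e$, while the reverse follows by noting that for $b\in\B(F)$ one has $e\,b\,e=(TS)b(TS)=T(SbT)S=\alpha(SbT)\in\alpha(\B(F))$.

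\textbf{Implication (2)$\Rightarrow$(1), the heart of the matter.} Here $eF=\ker(1-e)$ is a closed, hence Banach, complemented subspace, and the hypothesis reads $\alpha(\B(F))=e\B(F)e\cong\B(eF)$. Injectivity of $\alpha$ is immediate: $\alpha(a)=0$ gives $Ta=\alpha(a)T=0$, and injectivity of $T$ yields $a=0$. Evaluating $Ta=\alpha(a)T$ at $a=1$ gives $T=eT$, so the range of $T$ lies inside $eF$. The crucial step is to upgrade this to $T$ mapping \emph{onto} $eF$: the range of $T$ is invariant under every $\alpha(a)$, because $\alpha(a)T(F)=Ta(F)\subseteq T(F)$; but under the identification $e\B(F)e\cong\B(eF)$ the algebra $\alpha(\B(F))$ is the full operator algebra on $eF$, which acts irreducibly (a nonzero invariant subspace, hit by suitable rank-one operators $\Theta_{u,g}$, must contain every $u\in eF$). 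Hence the nonzero subspace $T(F)$ equals $eF$, so $T\colon F\to eF$ is a continuous bijection of Banach spaces, and the open mapping theorem supplies a bounded inverse $T^{-1}\colon eF\to F$. I then set $S:=T^{-1}e\in\B(F)$ and verify mechanically that $ST=T^{-1}eT=T^{-1}T=1$, that $TS=TT^{-1}e=e$, and finally that $TaS=\alpha(a)TS=\alpha(a)e=\alpha(a)$, so $\alpha$ is inner.

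\textbf{Uniqueness and the isometric case.} If $\alpha(a)=TaS=TaS'$ for all $a$, then already at $a=1$ we get $T(S-S')=0$, whence $S=S'$ by injectivity of $T$. When $\alpha$ is isometric it is in particular contractive, so Proposition~\ref{prop:on_inner_endo}(2) applies with $ST=1$: replacing $T$ by $\|T\|^{-1}T$ turns $T$ into an isometry, and this rescaling preserves both injectivity and the intertwining relation $Ta=\alpha(a)T$ (a scalar commutes with it) while keeping $ST=1$. With $T$ an isometry we have $TS=\alpha(1)$ and $\|S\|=\|TS\|=\|\alpha(1)\|=1$, the last equality because an isometric $\alpha$ satisfies $\|\alpha(1)\|=\|1\|=1$.

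The one genuinely nontrivial point is the surjectivity of $T$ onto $eF$ in (2)$\Rightarrow$(1): it is exactly here that the range condition $\alpha(\B(F))=\alpha(1)\B(F)\alpha(1)$ is converted, via irreducibility of the corner algebra on $eF$ together with the open mapping theorem, into invertibility of $T$ as a map $F\to eF$ and thus into the existence of $S$. Everything else is algebraic bookkeeping with the idempotent $\alpha(1)$ and the relation $Ta=\alpha(a)T$.
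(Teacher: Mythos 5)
Your proof is correct and follows essentially the same route as the paper: both arguments hinge on showing $TF=\alpha(1)F$ from the range condition (the paper via the set identity $TF=T\B(F)F=\alpha(\B(F))TF=\alpha(1)\B(F)\alpha(1)TF=\alpha(1)F$, you via invariance of $TF$ under the corner algebra and its irreducibility on $\alpha(1)F$ --- the same transitivity fact $\B(F)h=F$ in different clothing), followed by the inverse mapping theorem and the definition $S:=T^{-1}\alpha(1)$. Your uniqueness argument (evaluate $TaS=TaS'$ at $a=1$ and use injectivity of $T$) is a slight shortcut compared to the paper's, which instead pins down $S$ on $\alpha(1)F$ and shows $S=S\alpha(1)$; both are fine, as is your alternative computation $\|S\|=\|TS\|=\|\alpha(1)\|=1$ in the isometric case.
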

\begin{proof}
Assume (1) and let $T,S\in B(F)$ be such that 
$
TaS=\alpha(a)$  for all $a\in \B(F)$. Then $\alpha(SaT)=TS a TS =\alpha(1)a\alpha(1)$, 
which implies $\alpha(\B(F))=\alpha(1)\B(F)\alpha(1)$ (because we always have 
$\alpha (B(F))  \subseteq \alpha (1) B(F) \alpha (1)$). 
By Proposition \ref{prop:on_inner_endo}(1) we have $ST=1$. Hence $T$ is injective and for every $a\in \B(F)$  we get $Ta=Ta ST=\alpha(a)T$.
Thus (1)$\Rightarrow$(2).

Now assume (2).  Note that for any $h\in F\setminus \{0\}$  we have $\B(F)h=F$. Hence
$$
TF=T\B(F)F=\alpha(\B(F))TF=\alpha(1)\B(F)\alpha(1) TF= \alpha(1)F.
$$
Since $\alpha(1)\in \B(F)$ is an idempotent, this implies that $TF$ is a closed complemented subspace of $F$. In particular,
$T:F\to \alpha(1)F$ is a bounded invertible operator.
Defining 
\begin{equation}
\label{e-inner-2}
Sh:=T^{-1} (\alpha(1) h), \qquad h \in F,
\end{equation}
 we get $S\in \B(F)$ such that $TS =\alpha(1)$, and therefore equality $Ta=\alpha(a)T$, $ a\in \B(F)$ implies that
$
TaS=\alpha(a)TS = \alpha (a) \alpha (1) =\alpha (a). 
$  
Hence $\alpha$ is inner. Clearly, we have $ST=1$ and threfore $\alpha$ is injective by Proposition \ref{prop:on_inner_endo}(1).
Thus (2)$\Rightarrow$(1).

Suppose now that equivalent conditions (1), (2) hold. Let $T$ be as in (2) and let $S\in \B(F)$ be any operator such that 
$
TaS=\alpha(a)$  for all $a\in \B(F)$.  Since $TS \alpha(1)=\alpha(1)\alpha(1)=\alpha(1)$, we conclude that 
$S|_{\alpha(1)F}=T^{-1}$ (recall that $T: F \to \alpha (1)F$
is a bijection). Note also that
$ST=1$ by Proposition \ref{prop:on_inner_endo}(1). Hence $
S = STS = S\alpha(1)$.  Therefore 
$S$ has to be of the form \eqref{e-inner-2}.

If $\alpha$ is isometric,  we may choose $T$ to be isometry by Proposition \ref{prop:on_inner_endo}(2). Then $\|Sh\|=\|T^{-1} (\alpha(1) h)\|=\|\alpha(1) h\|\leq \|h\|$ 
and $\|Sh\|=\|h\|$ if $h\in \alpha(1)F$. Thus $\|S\|= 1$. 
\end{proof}
\begin{ex}[endomorphisms of $M_n(\C)$]\label{ex:endomorphisms of matrices} By Skolem–Noether theorem  endomorphisms of $M_n(\C)$ are necessarily  inner automorphisms: for  every non-zero endomorphism $\alpha:M_n(\C)\to M_n(\C)$ there is an invertible $T\in M_n(\C)$ such that $\alpha(a)=TaT^{-1}$. This well known fact could also be recovered using Proposition \ref{prop:inner_injective_endo}.
\end{ex}
\begin{ex}[$*$-endomorphisms of $\B(H)$]\label{ex:endomorphisms_of_B(H)}
Let $H$ be a separable Hilbert space. Let 
  $\alpha:\B(H)\to \B(H)$ be a $*$-endomorphism.  It is well known, cf. \cite{Laca},  \cite{Brat_Jorg_Price}, that  $\alpha$ is necessarily injective. Moreover,	there is a number $n=1,2,{\dots}, \infty$  called  \emph{multiplicity index} or \emph{Powers index} for $\alpha$,
and a family $\{V_i\}_{i=1}^n$ of isometries   with orthogonal ranges
 such that
$$
 \alpha(a)= \sum_{i=1}^{n} V_i a V_i^*, \qquad a \in \B(H),
$$
where in the case $n=\infty$ the sum is weakly convergent. It follows that  a $*$-endomorphism of $B(H)$ is inner if and only if its multiplicity index is $1$ 
(the only if part follows from the last part of Lemma \ref{prop:on_inner_endo} and Proposition \ref{t-part-isom} below).
\end{ex}
Endomorphism in Example \ref{endomor nie bedacy postaci}  generates a  field of endomorphisms of $\B(H)$  with Powers index $n>1$, and thus  they are not inner. This agrees with the following:
 \begin{prop}
 \label{zapowiedziane twierdzenie}
Let  $\alpha:C(X,\B(F))\to C(X,\B(F))$ be an endomorphism and let 
$\{\alpha_x\}_{x\in \Delta}$  be the field of  (non-zero) endomorphisms of
 $\B(F)$ generated by $\alpha$. If the endomorphisms  $\{\alpha_x\}_{x\in \Delta}$  are inner, then $\alpha$ generates a  partial dynamical system.
 More specifically,  $\{\alpha_x\}_{x\in \Delta}$ are inner if and only if $\alpha$ is of the form 
 $$
\alpha(a)(x)=
\begin{cases}
T_x a(\p(x))S_x,& x \in \Delta,\\
0,& x\notin \Delta,
\end{cases}
 \qquad  a \in C(X,\B(F)),
$$ 
where $\p:\Delta \rightarrow X$ is a continuous partial map and  $\{T_x, S_x\}_{x\in \Delta}\subseteq \B(F)$. 
 \end{prop}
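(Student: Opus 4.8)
The plan is to reduce the substantive (forward) implication to verifying the near-invariance condition \eqref{prawie zachowuje centrum}, after which Proposition~\ref{tensor podstawowe} finishes the job. The reverse implication is immediate: if $\alpha$ has the displayed form, then $\alpha_x(b)=\alpha(1\otimes b)(x)=T_x\,b\,S_x$ for $x\in\Delta$, so each $\alpha_x$ is inner. Conversely, assume every $\alpha_x$, $x\in\Delta$, is inner and fix $T_x,S_x\in\B(F)$ with $\alpha_x(b)=T_x b S_x$. Writing $P_x:=\alpha(1)(x)=T_xS_x$, note that $P_x$ is idempotent (because $\alpha(1)$ is) and that $P_x\neq 0$ precisely on $\Delta$, since the homomorphism identity $\alpha_x(b)=\alpha_x(b)P_x$ forces $\alpha_x=0$ whenever $P_x=0$.

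Fix $f\in C(X)$ and put $c:=\alpha(f\otimes 1)$. The key point is that $f\otimes 1$ is central in $C(X,\B(F))$, so it commutes with every $1\otimes b$; applying $\alpha$ and evaluating at $x\in\Delta$ gives $c(x)\,\alpha_x(b)=\alpha_x(b)\,c(x)$ for all $b\in\B(F)$, while $(f\otimes 1)\cdot 1=f\otimes 1=1\cdot(f\otimes 1)$ yields $c(x)=P_xc(x)P_x$. I would then feed in rank-one operators: with $\Theta_{u,\phi}$ as in the proof of Proposition~\ref{prop:on_inner_endo}, one computes $\alpha_x(\Theta_{u,\phi})=T_x\Theta_{u,\phi}S_x=\Theta_{T_xu,\,S_x^*\phi}$, so the commutation relation becomes $\Theta_{c(x)T_xu,\,S_x^*\phi}=\Theta_{T_xu,\,c(x)^*S_x^*\phi}$ for all $u,\phi$. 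Comparing the ranges of these rank-one operators (choosing $\phi$ with $S_x^*\phi\neq 0$, possible as $S_x\neq 0$) shows that $c(x)(T_xu)\in\C\,T_xu$ for every $u$; that is, every vector of $\operatorname{ran}T_x\supseteq\operatorname{ran}P_x$ is an eigenvector of $c(x)$.

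From here the conclusion $c(x)=g(x)P_x$ for a scalar $g(x)$ follows by elementary linear algebra. If $\dim\operatorname{ran}P_x\ge 2$, the standard fact that a subspace consisting entirely of eigenvectors forces a single eigenvalue gives $c(x)|_{\operatorname{ran}P_x}=g(x)\,\mathrm{id}$, and combined with $c(x)=P_xc(x)P_x$ this yields $c(x)=g(x)P_x$; if $\dim\operatorname{ran}P_x=1$, the corner $P_x\B(F)P_x$ is one-dimensional and $c(x)=P_xc(x)P_x\in\C P_x$ automatically. It then remains to check that $x\mapsto g(x)$ is continuous on $\Delta$ and extends to $C(X)$: since $x\mapsto c(x)$ and $x\mapsto P_x$ are continuous, near any $x_0\in\Delta$ one fixes $u_0,\phi_0$ with $\phi_0(P_{x_0}u_0)\neq 0$ and writes $g(x)=\phi_0(c(x)u_0)/\phi_0(P_xu_0)$, which is continuous; as $\Delta$ is clopen, $g$ extends by $0$ to an element of $C(X)$. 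Then $\alpha(f\otimes 1)=(g\otimes 1)\alpha(1)$, which is exactly \eqref{prawie zachowuje centrum}, and Proposition~\ref{tensor podstawowe} produces the partial map $\p:\Delta\to X$ together with $\alpha(a)(x)=\alpha_x(a(\p(x)))=T_x\,a(\p(x))\,S_x$ on $\Delta$. The main obstacle is precisely the middle step — proving $c(x)\in\C P_x$ \emph{without} assuming $\alpha_x$ injective — and the rank-one/eigenvector argument is what handles the possible non-injectivity (and the degenerate one-dimensional corner) uniformly.
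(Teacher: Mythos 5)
Your proposal is correct, and its skeleton is the same as the paper's: reduce the forward implication to the near-invariance condition \eqref{prawie zachowuje centrum} and then invoke Proposition~\ref{tensor podstawowe}. The only real divergence is in the one technical step, namely showing that $c(x):=\alpha(f\otimes 1)(x)$ is a scalar multiple of $P_x=\alpha_x(1)$. The paper gets this by observing that $\alpha_x(S_xaT_x)=P_xaP_x$, so $\alpha_x$ maps \emph{onto} the full corner $P_x\B(F)P_x\cong\B(F_x)$ with $F_x=P_xF$; since $c(x)$ lies in that corner and commutes with everything in it, it lies in the center of $\B(F_x)$, which is $\C P_x$. You instead test the commutation relation against rank-one operators and run an eigenvector argument on $\operatorname{ran}T_x\supseteq\operatorname{ran}P_x$, which is more hands-on but reaches the same conclusion and, as you note, never needs injectivity of $\alpha_x$ (the paper's route does not need it either, since surjectivity onto the corner is automatic from innerness). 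Your continuity argument for $g$ via $g(x)=\phi_0(c(x)u_0)/\phi_0(P_xu_0)$ is a slightly more explicit version of the paper's one-line remark and is fine. In short: same strategy, with a locally different (but equally valid) proof of the centrality step.
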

 \begin{proof} By Proposition  \ref{tensor podstawowe}, it suffices
 to show  that  \(\alpha\)  satisfies condition  \eqref{prawie zachowuje centrum}.
To this end, let $a\in C(X)$. We need to show that there is $a'\in C(X)$ such that
$$
\alpha(a\otimes 1)(x)=a'(x)\alpha_x(1), \qquad x \in X.
$$ 
Since we assume $\{\alpha_x\}_{x\in \Delta}$   are inner, there are operators 
$\{T_x, S_x\}_{x\in \Delta}\subseteq \B(F)$ such that $\alpha_x(a)=T_x aS_x$ for $a\in \B(F)$ and $x\in \Delta$. In particular,
we have  $\alpha_x(S_xaT_x)=\alpha_x(1) a\alpha_x(1)$. Thus putting $F_x := \alpha_x(1) F$ for each $x\in \Delta$ we may identify $\B(F_x)$
with $\alpha_x(1)\B(F)\alpha_x(1)$. Then we  get 
$
\alpha(1\otimes \B(F))(x)=\B(F_x) \subseteq \B(F)$. This implies that  $\alpha(a\otimes 1)(x)$ lies in the center $Z(\B(F_x))=\{\lambda\cdot  \alpha_x(1): \lambda \in \C\}$  of the algebra $\B(F_x)$.
Indeed,   for all $b\in \B(F)$ we have
$$
\alpha(a\otimes 1)(x) \alpha(1\otimes b)(x)=\alpha(a\otimes b)(x)= \alpha(1\otimes b)(x) \alpha(a\otimes 1)(x).
$$  
 Hence for each $x\in \Delta$ there is a number  $a'(x)\in \C$ such that
 $$
 \alpha(a\otimes 1)(x)=a'(x)\alpha_x(1).
 $$ 
The function $a':\Delta\to  \C$ obtained in this way is continuous this is forced by the continuity of the maps 
$\Delta \ni x \to  \alpha(a\otimes 1)(x)=a'(x)\alpha_x(1)$ and $\Delta\ni x \to \alpha_x(1)$). Putting $a'\equiv 0$ outside the clopen 
set \(\Delta\), we get the desired function  $a'\in C(X)$.
 \end{proof}
\begin{cor}
\label{cor-M-n}
A mapping  $\alpha: C(X,M_n(\C))\to C(X,M_n(\C))$ is an  endomorphism if and only if it is of the form
$$
\alpha(a)(x)=
\begin{cases}
T_x a(\p(x))T_x^{-1},& x \in \Delta,\\
0,& x\notin \Delta,
\end{cases}
 \qquad  a \in C(X,M_n(\C)),
$$ 
where $\p:\Delta \rightarrow X$ is a continuous partial mapping of $X$  and  $\{T_x\}_{x\in \Delta}\subseteq M_n(\C)$ is a family of invertible  matrices.
\end{cor}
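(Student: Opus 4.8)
The plan is to read this off as the $n\times n$ matrix specialization ($F=\C^n$, $\B(F)=M_n(\C)$) of the two structural results already proved, namely Proposition~\ref{tensor podstawowe} and Proposition~\ref{zapowiedziane twierdzenie}, with the extra input supplied by the Skolem--Noether theorem recorded in Example~\ref{ex:endomorphisms of matrices}: \emph{every} non-zero endomorphism of $M_n(\C)$ is an inner automorphism. Thus the only thing special about the matrix case is that the field of endomorphisms generated by $\alpha$ is automatically inner, and moreover inner via invertible elements with $S_x=T_x^{-1}$.

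For the forward implication, suppose $\alpha\colon C(X,M_n(\C))\to C(X,M_n(\C))$ is an endomorphism. I would first form the continuous field $\{\alpha_x\}_{x\in X}$ of endomorphisms of $M_n(\C)$ generated by $\alpha$ via \eqref{pole generation}, together with the clopen set $\Delta=\{x:\alpha_x\neq 0\}$. For each $x\in\Delta$ the map $\alpha_x$ is a non-zero endomorphism of $M_n(\C)$, so by Example~\ref{ex:endomorphisms of matrices} it is an inner \emph{automorphism}: there is an invertible $T_x\in M_n(\C)$ with $\alpha_x(b)=T_xbT_x^{-1}$. Hence the field $\{\alpha_x\}_{x\in\Delta}$ is inner, and Proposition~\ref{zapowiedziane twierdzenie} applies, yielding a continuous partial map $\p\colon\Delta\to X$ and operators $\{T_x,S_x\}_{x\in\Delta}$ with $\alpha(a)(x)=T_xa(\p(x))S_x$ on $\Delta$ and $0$ off $\Delta$. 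It remains to check that $S_x$ may be taken to be $T_x^{-1}$: since each $\alpha_x$ is an automorphism it is injective, so $S_xT_x=1$ by Proposition~\ref{prop:on_inner_endo}(1), while $T_xS_x=\alpha_x(1)=1$ because $\alpha_x$ is unital; together these give $S_x=T_x^{-1}$, which is the asserted form.

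For the converse I would argue that the displayed formula always defines an endomorphism. Each pointwise map $\alpha_x=T_x(\cdot)T_x^{-1}$ is a (non-zero) inner automorphism of $M_n(\C)$, and $\p\colon\Delta\to X$ is a continuous partial map on a clopen set, so the only hypothesis of Proposition~\ref{tensor podstawowe} left to verify is that $\{\alpha_x\}_{x\in\Delta}$ is a \emph{continuous} field; but this is exactly the requirement that $x\mapsto T_xa(\p(x))T_x^{-1}$ be continuous for every $a$, i.e. that the formula define a self-map of $C(X,M_n(\C))$ at all. Granting this, Proposition~\ref{tensor podstawowe} immediately gives that $\alpha$ is an endomorphism.

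The computations are routine once the right results are invoked; the only point that needs genuine care is the identification $S_x=T_x^{-1}$ in the forward direction, for which one must use both halves of the invertibility encoded in an automorphism (injectivity through Proposition~\ref{prop:on_inner_endo}(1), and unitality for $T_xS_x=1$). I would also emphasize, as a conceptual caveat rather than a gap, that although each $T_x$ is invertible, the assignment $x\mapsto T_x$ need \emph{not} be continuous---only the conjugation $x\mapsto T_x(\cdot)T_x^{-1}$ is---reflecting the cohomological obstruction to lifting a continuous family of automorphisms to a continuous family of implementing matrices alluded to in the introduction.
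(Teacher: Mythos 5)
Your proof is correct and follows essentially the same route as the paper, whose entire proof of Corollary~\ref{cor-M-n} is ``Combine Proposition~\ref{zapowiedziane twierdzenie} and Example~\ref{ex:endomorphisms of matrices}.'' You simply make explicit the details the paper leaves implicit (the identification $S_x=T_x^{-1}$ via Proposition~\ref{prop:on_inner_endo}(1) and unitality, and the converse via Proposition~\ref{tensor podstawowe}), all of which are accurate.
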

\begin{proof}
Combine Proposition \ref{zapowiedziane twierdzenie} and  Example \ref{ex:endomorphisms of matrices}.
\end{proof}
 There is no a priori given universal way of choosing the operators $\{T_x, S_x\}_{x\in \Delta}\subseteq \B(F)$ in  Proposition \ref{zapowiedziane twierdzenie}. Even when considering $\B(F)$ with a weak operator topology, the mapping $X\ni x \to T_x\in \B(F)$ may be discontinuous or even unmeasurable:
\begin{ex}
\label{ex-unmeas-1}
Let $X=[0,1]$  and $\alpha_x=id$, for $x\in X$. Suppose  $V$ is a  Vitali set (an unmeasurable subset of $[0,1]$). Put
$
T_x:=1$, $S_x := 1$ for $x\in V$, and  $T_x:=-1$, $S_x:= -1$ for $x\notin V$. Then  $T:X\to \B(\C)$ is an unmeasurable field of isometries  generating continuous field of automorphisms $\alpha_x = T_x (\cdot )S_x$.
\end{ex}
In the isometric case, the choice of $\{T_x, S_x\}$ is unique up to constants in $\T$ and  we may   choose  $x \to T_x$ to be   continuous locally. Thus, the obstacles to 
continuity of $x \to T_x$ may be identified by means of cohomological objects:
\begin{lem}\label{lem:on_choice_of_S_x_T_x}
Suppose that $\{\alpha_x\}_{x\in \Delta}$ is a continuous field of  inner isometric endomorphisms of $\B(F)$. Let    $\{T_x, S_x\}_{x\in \Delta}$ be such that  
$\alpha_x(a)=T_x a S_x$, for $a\in \B(F)$,
and $T_x$ is an isometry, for $x\in \Delta$.
\begin{enumerate}
\item The operators $\{T_x, S_x\}_{x\in \Delta}$ are  determined  up to constants in $\T$.
Namely, if  $\{T_x', S_x'\}_{x\in \Delta}\subseteq \B(F)$ where  $T_x'$ are isometries, then $\alpha_x(a)=T_x' a S_x'$ for all $a\in \B(F)$ if and only if
there is $\lambda\in \T$ such that $T_x=\lambda T_x'$ and $S_x=\overline{\lambda} S_x'$.
\item For any $x_0\in \Delta$ there is an open  neighbourhood $U\subseteq \Delta$ of $x_0$ and numbers $\{\lambda_x\}_{x\in U}\subseteq \T$  
 such that  the map $U\ni x\mapsto \lambda_x T_xh\in F$ is continuous for every $h\in F$.
\end{enumerate}
\end{lem}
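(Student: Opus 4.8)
I will treat the two parts separately. For part (1) the ``if'' direction is an immediate computation: if $T_x=\lambda T_x'$ and $S_x=\overline\lambda S_x'$ with $\lambda\in\T$, then $T_x a S_x=|\lambda|^2 T_x' a S_x'=T_x' a S_x'$. For the ``only if'' direction I fix $x\in\Delta$ and use that, since $\alpha_x$ is isometric and $T_x,T_x'$ are isometries, Proposition \ref{prop:on_inner_endo}(2) gives $S_x T_x=1$ and $S_x' T_x'=1$. Multiplying the identity $T_x a S_x=T_x' a S_x'$ on the left by $S_x'$ and on the right by $T_x'$ collapses it, via these relations, to
$$(S_x' T_x)\, a\, (S_x T_x')=a,\qquad a\in\B(F).$$
Writing $P:=S_x'T_x$ and $Q:=S_xT_x'$, the key step is to show both are scalars. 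Evaluating $PaQ=a$ on rank-one operators $\Theta_{u,f}$ (with $f(u)\neq0$) yields $\Theta_{Pu,Q^*f}=\Theta_{u,f}$, so $Pu\in\C u$ for every nonzero $u$; the standard fact that an operator having every vector as an eigenvector is scalar gives $P=\lambda 1$, and then taking $a=1$ gives $Q=\lambda^{-1}1$ with $\lambda\neq0$.

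To finish part (1) I convert the relations $S_x'T_x=\lambda 1$ and $S_xT_x'=\lambda^{-1}1$ into the asserted phase identities using $T_x'S_x'=\alpha_x(1)=T_xS_x$ together with $S_xT_x=S_x'T_x'=1$: left-multiplying the first by $T_x'$ gives $\alpha_x(1)T_x=\lambda T_x'$, i.e. $T_x=\lambda T_x'$, while right-multiplying the second by $S_x'$ gives $S_x=\lambda^{-1}S_x'$. Finally, since $T_x$ and $T_x'$ are isometries, $T_x=\lambda T_x'$ forces $|\lambda|=1$, so $\lambda\in\T$ and $\lambda^{-1}=\overline\lambda$. (Note that $\lambda$ here depends on the fixed point $x$.)

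For part (2) I fix $x_0\in\Delta$ and extract the relevant information from continuity of the field tested on a single rank-one operator. Since $S_{x_0}$ is surjective (because $S_{x_0}T_{x_0}=1$), I can choose $w_0\in F$ and $g\in F^*$ with $c(x_0):=g(S_{x_0}w_0)\neq0$. For each $h\in F$ I set
$$V_h(x):=\alpha_x(\Theta_{h,g})(w_0)=\Theta_{T_x h,\,S_x^*g}(w_0)=g(S_x w_0)\,T_x h=c(x)\,T_x h,$$
where $c(x):=g(S_x w_0)$. Because $x\mapsto\alpha_x(\Theta_{h,g})$ is norm-continuous (continuity of the field) and evaluation at $w_0$ is continuous, each $V_h$ is a continuous $F$-valued function on $\Delta$. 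The crucial observation is that although the scalar $c(x)$ need not be continuous (since, by part (1), $x\mapsto S_x$ is only defined up to a phase), its modulus is: fixing a unit vector $u_0$ and using $\|T_x u_0\|=1$ we get $|c(x)|=\|V_{u_0}(x)\|$, which is continuous and nonzero at $x_0$, hence nonzero on an open neighbourhood $U\subseteq\Delta$ of $x_0$.

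On $U$ I then define $\lambda_x:=c(x)/|c(x)|\in\T$, and for every $h$ compute
$$\lambda_x T_x h=\frac{c(x)}{|c(x)|}\cdot\frac{V_h(x)}{c(x)}=\frac{V_h(x)}{|c(x)|},$$
a quotient of a continuous $F$-valued function by a continuous nonvanishing scalar, hence continuous on $U$; this is exactly the claimed local continuity. The main obstacle is precisely this phase indeterminacy: one cannot normalise $T_x$ by dividing out $c(x)$, because $c(x)$ is not accessible as a continuous function. The point is that only the continuous modulus $|c(x)|$ is needed, and dividing the continuous product $c(x)T_x h$ by $|c(x)|$ leaves a continuous representative $\lambda_x T_x h$ with the residual phase absorbed into $\lambda_x$.
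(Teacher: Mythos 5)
Your proof is correct and follows essentially the same route as the paper's: in part (1) both arguments reduce to showing that a mixed product such as $S_x'T_x$ is a scalar multiple of the identity (the paper checks directly that $S_xT_x'$ is central, you test the identity $PaQ=a$ on rank-one operators), and in part (2) both extract the phase $\lambda_x=g(S_xw_0)/|g(S_xw_0)|$ by pushing a rank-one operator through the continuous field. Your part (2) is in fact slightly more careful than the paper's, since you use surjectivity of $S_{x_0}$ to guarantee that the neighbourhood $U$ actually contains $x_0$.
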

\begin{proof} (1). By Proposition \ref{prop:on_inner_endo}(1) we have $S_x T_x=S_x'T_x'=1$. Thus for every  $a\in \B(F)$ we get
$$
S_x T_x'a=  S_x T_x'a S_x' T_x'= S_x \alpha(a) T_x'=S_x T_x a S_x T_x'=aS_x T_x'.
$$
Hence $S_x T_x'$ belongs to the center of $\B(F)$ and therefore is a multiple  of the identity operator. That is
$S_x T_x'=\lambda 1$ for $\lambda\in \C$. This implies that
$T_x'=\lambda T_x$ (because both $T_x$, $T_x'$ are  isometries onto $\alpha_x(1)F$, and $S_x:\alpha_x(1)F\to F$ is an inverse to $T_x$).
Since $\|T_x\|=\|T_x'\|=1$  we get $\lambda\in \T$.

(2).  Take  $x_0\in X$ and $h_0\in F$. Without loss of generality we may assume $\|h_0\|=1$. Let $f\in F^*$, be such that $\|f\|=f(h_0)=1$. 
Then $\Theta_{h_0,f}$ is a norm one projection onto the space spanned by $h_0$.  Since the map $\Delta\ni x\longmapsto  \alpha_x(\Theta_{h_0,f})\in \B(F)$
is continuous the set
$$
U:=\{x\in \Delta: \alpha_x(\Theta_{h_0,f})h_0 \neq 0 \}=\{x\in \Delta:f(S_xh_0) \neq 0 \}
$$
is open. 
For every $x\in U$ 	we put 
$$
h_x:=\frac{\alpha_x(\Theta_{h_0,f})h_0}{\| \alpha_x(\Theta_{h_0,f})h_0\|}= \frac{f(S_xh_0)}{| f(S_xh_0)|}T_x h_0 
$$ so that we have $\|h_x\|=1$. 
Note that $U\ni x \longmapsto h_x \in F$ is continuous. Also  the map
$\Delta\times \B(F)\ni (x,a) \longmapsto \alpha_x(a) \in \B(F)$ is continous, because $\|\alpha_x\|=1$ for $x\in \Delta$.
Hence for every $h\in F$ the map
$$
U\ni x\longmapsto \widetilde{T}_xh := \alpha_x(\Theta_{h,f})h_x= \frac{\alpha_x(\Theta_{h,f})h_0}{\| \alpha_x(\Theta_{h_0,f})h_0\|}= \frac{f(S_xh_0)}{| f(S_xh_0)|}T_x h
$$
 is continuous. Thus putting $\lambda_x:= \frac{f(S_xh_0)}{| f(S_xh_0)|}$ we get the assertion.
\end{proof}
\begin{rem}\label{rem:cohomological remark} Let $\{\alpha_x\}_{x\in X}$ be a continuous field of $*$-automorphisms of the algebra compact operators $\K(H)$ on 
 an infinite dimensional (separable) Hilbert space $H$. Then a similar fact to Lemma \ref{lem:on_choice_of_S_x_T_x} holds, cf. \cite[Proposition 1.6]{RaeWill}. 
That is,  $\{\alpha_x\}_{x\in X}$ is implemented by a field of unitary operators $\{U_x\}_{x\in X}$, which  locally can be chosen to be continuous.
Therefore, $\{\alpha_x\}_{x\in X}$ extends uniquely to  a continuous field of $*$-automorphisms  $\B(H)$, and we may identify such fields with $C(X)$-linear automorphisms of $A:=C(X,\B(H))$. The latter form a group that we denote 
by $\Aut_{C(X)}(A)$. Let $\Inn(A)$ be the group of inner automorphisms of $A$. Clearly, $\Inn(A)$ is a  subgroup of $\Aut_{C(X)}(A)$, and  $\alpha\in\Aut_{C(X)}(A)$ can be written in the form $\alpha(a)(x)=U_x aU_x^*$, $a\in A$, for a continuous map
$X\ni x \mapsto U_x\in \B(H)$ if  and only if $\alpha \in \Inn(A)$. By \cite[Theorem 5.42]{RaeWill}  we have 
$$
\Aut_{C(X)}(A)/\Inn(A)\cong H^2(X,\Z),
$$
where  $H^2(X,\Z)$ is the second $\check{\text{C}}$ech cohomology group of $X$ with integer coefficients.
Thus whenever $H^2(X,\Z)$ is non-trivial, so for instance when $X$ is a two-dimensional sphere or a torus, 
\emph{there is always a continuous field of (inner) automorphisms $\{\alpha_x\}_{x\in X}$ of $\B(H)$ such that 
every field of operators $\{T_x\}_{x\in X}\in \B(H)$ that implements $\{\alpha_x\}_{x\in X}$ is discontinuous} (then $T_x$ is necessarily a unitary and $S_x=T_x^*$).

\end{rem}
\section{Abstract weighted shift operators and weighted endomorphisms}
\label{s-weigh-sh}

In this section we introduce abstract weighted shift operators associated with endomorphisms. To this end, we use the notion of partial isometry acting on Banach spaces in the sense of Mbekhta. We show that spectral radii of the corresponding weighted partial isometries and   weighted endomorphisms are equal. Thus the results of the present paper can be readily applied to a vast class of operators acting on Banach spaces. 

\subsection{Partial isometries on Banach spaces and endomorphisms}\label{czasciowe izometryje}

Recall that an operator  $T\in \B(H)$ acting on a Hilbert space  $H$  is  a  \emph{partial isometry} if it is an isometry on the orthogonal complement of its kernel. Then   $(\Ker T)^\bot$ is called the  \emph{initial subspace} and $T H$ the \emph{final subspace} of  $T$. Partial isometries on Hilbert spaces have a number of various well known characterisations. For instance,    $T\in \B(H)$ is a partial isometry iff one of the following equivalent conditions hold: 
\begin{itemize}
\item[i)] $T^*T$ is an orthogonal projection  (onto initial subspace),
\item[ii)] $TT^*$ is an orthogonal projection  (onto the final subspace),
\item[iii)] $TT^*T =T$,
\item[iv)] $T^*TT^* =T^*$.
\end{itemize}
We recall one more  characterisation of partial isometries which leads to a generalization of this notion to the realm of Banach spaces. 
\begin{prop}[\cite{Mbekhta} 3.1, 3.3]
\label{t-part-isom}
Let $H$ be a Hilbert space. An operator  $T\in \B(H)$  is a partial  isometry if and only if  $T$ is a contraction and there exists a contraction  $S\in \B(H)$ which is a generalized inverse to  $T$, that is 
  $
  TST=T$ and  $STS=S$
 (then  we necessarily have $S=T^*$). 
\end{prop}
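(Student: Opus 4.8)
The plan is to treat the two implications separately, with the forward direction being immediate from the four listed characterizations and the backward direction carrying essentially all the weight.

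For the ``only if'' direction, suppose $T$ is a partial isometry. Since $T$ is isometric on $(\Ker T)^\bot$ and vanishes on $\Ker T$, it is automatically a contraction. I would then simply set $S:=T^*$. The listed identities (iii) $TT^*T=T$ and (iv) $T^*TT^*=T^*$ say precisely that $TST=T$ and $STS=S$, so $T^*$ is a contractive generalized inverse (contractive because $\|T^*\|=\|T\|\le 1$). This settles one direction and, in passing, the parenthetical assertion $S=T^*$.

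For the ``if'' direction, assume $T$ and $S$ are contractions with $TST=T$ and $STS=S$. The first step is to observe that $P:=ST$ and $Q:=TS$ are idempotents, since $P^2=S(TST)=ST=P$ and $Q^2=T(STS)=TS=Q$. The key step, and the only place where the Hilbert-space geometry is genuinely used, is that $\|P\|\le\|S\|\,\|T\|\le 1$ and likewise $\|Q\|\le 1$, so that $P$ and $Q$ are idempotents of norm at most one, hence \emph{orthogonal} projections. I would here invoke (with a one-line proof) the standard lemma that a norm-$\le 1$ idempotent $P$ on a Hilbert space is self-adjoint: for $u\in PH$ and $v\in\Ker P$ one has $\|u\|=\|P(u+tv)\|\le\|u+tv\|$ for every scalar $t$, and expanding $\|u+tv\|^2$ and letting $t\to 0$ with suitable phase forces $\langle v,u\rangle=0$, i.e. $PH\perp\Ker P$.

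Next I would identify these projections and finish. From $TST=T$ one checks $\Ker P=\Ker T$ (if $Tx=0$ then $Px=STx=0$; conversely $Px=STx=0$ gives $Tx=TSTx=T(STx)=0$), so $P=ST$ is the orthogonal projection onto $(\Ker T)^\bot$. For $x\in(\Ker T)^\bot=PH$ we then have $STx=x$, whence $\|x\|=\|STx\|\le\|Tx\|\le\|x\|$, forcing $\|Tx\|=\|x\|$; thus $T$ is isometric on $(\Ker T)^\bot$, i.e. a partial isometry. For the final claim $S=T^*$, note that $T^*T$ is now also the orthogonal projection onto $(\Ker T)^\bot$, so $ST=P=T^*T$; symmetrically $Q=TS$ is the orthogonal projection onto the final space ($\operatorname{ran} T$ is closed for a partial isometry, $TST=T$ gives $TH\subseteq QH$, and $Q=TS$ gives $QH\subseteq TH$), which equals $TT^*$, so $TS=TT^*$. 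Combining $STS=S$ with these identities and (iv), $S=S(TS)=S\,TT^*=(ST)T^*=(T^*T)T^*=T^*TT^*=T^*$. I expect the main obstacle to be precisely the passage from ``idempotent'' to ``orthogonal projection'' via contractivity, since this is where the inner-product structure is indispensable and where both contractivity hypotheses are consumed.
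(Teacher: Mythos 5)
Your argument is correct and complete: the forward direction is immediate from the identities $TT^*T=T$, $T^*TT^*=T^*$, and the backward direction correctly hinges on the one genuinely Hilbert-space fact that the contractive idempotents $ST$ and $TS$ must be orthogonal projections, after which the identifications $ST=T^*T$, $TS=TT^*$ and the computation $S=STS=T^*TT^*=T^*$ also settle the uniqueness claim in the parenthesis. The paper offers no proof of its own here (the proposition is quoted from Mbekhta, 3.1 and 3.3), so there is nothing to compare against; your route is the standard one for this result.
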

\begin{defn}[\cite{Mbekhta}]\label{definicja czesciowej izometrii}  
Let  $T$ be an operator  on a Banach space $E$. We say that $T$ is a  \emph{partial isometry} if it is a contraction and there is a contraction $S\in \B(E)$  such that  
$$
TST=T,\qquad STS=S.
$$
Contractions   $T$ and $S$ satisfying  the above relations will be called mutually \emph{adjoint partial isometries}.
\end{defn}
\begin{rem}
\begin{itemize}
\item[i)] A partial isometry on a Banach space can have more than one partial isometry as adjoint, see Example~\ref{operator przesuniecia na l_p} below.
\item[ii)] Not every isometry on a Banach space is a partial isometry. On the other hand, there are  spaces (that are not Hilbert spaces) where all isometries are partial isometries.  For example,  $L^p$, $1\leq p < \infty$ are such Banach spaces, cf.  \cite{Mbekhta}. 
\end{itemize}
\end{rem}
The following proposition is  a slightly extended version of \cite[Proposition~4.2]{Mbekhta}. 
It  gives a useful description of   adjoints to a partial isometry on Banach space. 
\begin{prop}\label{description of partial isometries} 
Let  $T\in \B(E)$. The following conditions are equivalent: 
\begin{itemize}
 \item[i)] $T$ is a partial isometry,
  \item[ii)] \begin{itemize}
 \item[a)] the kernel   $\Ker T$ of  operator  $T$ possesses a complement  $M$ such that restriction of  $T$ on $M$ is an isometry,
 \item[b)] there exists a contractive projection  $P\in \B(E)$  onto the range of operator  $T$. 
  \end{itemize}
\end{itemize}
If conditions  $i)$, $ii)$ are satisfied then  relations 
$$
STE=M,\qquad TS=P
$$
establish a bijective correspondence between partial isometries    $S$ adjoint to  $T$ and pairs  $(M,P)$, where  $M$ is a complement to  $\Ker T$ and $P$ is a contractive  projection onto  $TE$. 
\end{prop}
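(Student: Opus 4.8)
The plan is to set up a single dictionary that proves the equivalence of \(i)\) and \(ii)\) and establishes the claimed bijection simultaneously. To a partial isometry \(S\) adjoint to \(T\) I associate the pair \((M,P):=(STE,TS)\); conversely, to an admissible pair \((M,P)\) I associate the operator \(S:=(T|_M)^{-1}P\). Showing that these two assignments are mutually inverse delivers everything at once, so the bulk of the work is a sequence of short operator identities organized around \(TST=T\) and \(STS=S\).

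For \(i)\Rightarrow ii)\), given a contraction \(S\) with \(TST=T\) and \(STS=S\), I would put \(P:=TS\) and check \(P^2=TSTS=TS=P\), \(\|P\|\le\|T\|\|S\|\le 1\), and \(PE=TE\): the inclusion \(PE\subseteq TE\) is clear, while \(P(Tx)=TSTx=Tx\) shows \(P\) fixes \(TE\), giving \(TE\subseteq PE\). This is \(ii)b)\). For \(ii)a)\) I set \(M:=STE\) and first note \(M=SE\) (since \(Sx=(STS)x=(ST)(Sx)\in STE\)). The splitting \(x=(x-STx)+STx\) with \(T(x-STx)=Tx-Tx=0\) yields \(E=\Ker T+M\), while \(z\in M\cap\Ker T\), say \(z=STx\), forces \(Tz=TSTx=Tx=0\) and hence \(z=S(Tx)=0\); so \(M\) is a complement of \(\Ker T\) (in fact topological, as \(ST\) is a bounded idempotent with range \(M\) and kernel \(\Ker T\)). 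The one genuinely delicate point is the isometry of \(T|_M\): for \(z=STx\in M\) one has \(Tz=Tx\), so the chain \(\|z\|=\|STx\|\le\|Tx\|=\|Tz\|\le\|z\|\) (the first inequality using that \(S\) is a contraction, the second that \(T\) is) collapses to \(\|Tz\|=\|z\|\).

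For \(ii)\Rightarrow i)\), I would observe that \(TE=TM\) (as \(T\) kills \(\Ker T\)), so \(T|_M\colon M\to TE\) is a bijective isometry whose inverse is automatically an isometry; crucially this requires \emph{no} closedness or completeness hypothesis on \(M\) or \(TE\), which is exactly what makes \(S:=(T|_M)^{-1}P\) well defined on all of \(E\). Contractivity is immediate from \(\|Sx\|=\|(T|_M)^{-1}Px\|=\|Px\|\le\|x\|\). One then checks \(TS=P\) (because \(T(T|_M)^{-1}\) is the identity on \(TE\supseteq PE\)), that \(STx=(T|_M)^{-1}Tx\) is the \(M\)-component of \(x\) so that \(TSTx=Tx\), and that \(STSx=S(Px)=(T|_M)^{-1}Px=Sx\) by idempotency of \(P\); this gives the two defining relations and shows that this \(S\) returns the pair \((STE,TS)=(M,P)\).

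The remaining task is bijectivity of \(S\mapsto(STE,TS)\), which is where the pieces must be assembled with care. Surjectivity is precisely the construction above. For injectivity the key observation is that every adjoint \(S\) with \(STE=M\) and \(TS=P\) is forced to equal \((T|_M)^{-1}P\): from \(STS=S\) we get \(Sx\in SE=STE=M\) and \(T(Sx)=TSx=Px\in TE\), so \(Sx\) is the unique preimage in \(M\) of \(Px\) under the injective map \(T|_M\). Thus \(S\) is completely determined by \((M,P)\). The main obstacle is organizational rather than deep: one must marshal the identities \(M=SE=STE\), \(P=TS\), the isometry of \(T|_M\), and the unconditional boundedness of \((T|_M)^{-1}\) so that they interlock; once these are in hand, the equivalence, the explicit formulas, and the bijection all follow by bookkeeping.
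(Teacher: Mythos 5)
The paper itself does not prove this proposition: it is presented as a slightly extended version of \cite[Proposition 4.2]{Mbekhta} with the proof omitted, so there is no in-text argument to compare yours against. Your scheme --- sending an adjoint $S$ to the pair $(STE,TS)$, sending an admissible pair $(M,P)$ to $S:=(T|_M)^{-1}P$, and checking the two assignments are mutually inverse --- is the natural one, and the identities you use all check out: $M=SE=STE$; the splitting $x=(x-STx)+STx$ together with $z=STx,\ Tz=0\Rightarrow z=S(Tx)=0$ showing $E=\Ker T\oplus M$; the squeeze $\|STx\|\le\|Tx\|=\|T(STx)\|\le\|STx\|$ giving that $T|_M$ is isometric; $TS=P$ because $P$ fixes its range $TE$ pointwise; and, for injectivity, the observation that $Sx$ is forced to be the unique $T|_M$-preimage of $Px$ in $M$. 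You are also right that no closedness or completeness is needed to invert $T|_M$ (it is an isometric bijection onto $TE$), and your reading of the correspondence as being with pairs $(M,P)$ as in ii), i.e.\ with $T|_M$ isometric, is the only sensible one.

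One point needs flagging. Definition \ref{definicja czesciowej izometrii} requires a partial isometry to be a \emph{contraction}, and in your proof of ii)$\Rightarrow$i) you construct a contractive generalized inverse $S$ but never verify that $T$ itself is contractive. Condition ii) as literally stated does not imply this: on Euclidean $\R^2$ take $T(x,y)=(x+y,0)$, $M=\R\times\{0\}$ and $P(x,y)=(x,0)$; then ii)a) and ii)b) hold, your $S$ is the orthogonal projection onto $M$ and satisfies $TST=T$, $STS=S$, $\|S\|\le 1$, yet $\|T\|=\sqrt{2}$, so $T$ is not a partial isometry in the sense of the paper's definition. The underlying reason is that $\|Tx\|$ equals the norm of the $M$-component of $x$, so contractivity of $T$ is equivalent to contractivity of the projection onto $M$ along $\Ker T$, which ii) does not grant. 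This is really a defect of the statement as transcribed --- Mbekhta's original result carries the standing hypothesis that $T$ is a contraction --- but your proof should either invoke that hypothesis explicitly or note that it cannot be dispensed with; once it is added, your argument is complete.
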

\begin{ex}\label{operator przesuniecia na l_p} Let us consider the classical unilateral left shift operator  $T_{\N}$ acting on the space  $E=\ell^p(\N)$, $ p\in [1, \infty]$ or  $E=c_0(\N)$:
$$
T_{\N}(x(1),x(2),x(3){\dots})=(x(2),x(3),{\dots}).
$$
Clearly,  $T_\N$ is a partial isometry in the sense of Definition  \ref{definicja czesciowej izometrii}. The only projection onto  $T_\N E=E$ is the identity operator.  If   $E=\ell^p(\N)$ with  $p<\infty$, then the only complement to the subspace  $\Ker T_\N$ on which the operator  $T_\N$ is an isometry is the subspace 
$
M=\{x\in E: x(1)=0\}.
$
Therefore in this case  the only partial isometry adjoint to  $T_\N$ is the classical right shift 
$$
S_{\N}(x(1),x(2),{\dots})=(0,x(1),x(2),{\dots}).
$$
In the case when  $E=\ell^\infty(\N)$ or $E=c_0(\N)$, the situation changes. Indeed, then complements to the kernel  $T_\N$ on which the operator  $T_\N$ is an isometry can be indexed by elements of the unit ball of the dual space  $E^*$: 
$$
M_f=\{x\in l^\infty(\N): x(1)=f(x(2),x(3),{\dots})\},\qquad f \in E^*, \,\, \|f\|\leq 1.
$$
Hence  all the partial isometries  adjoint to  $T_{\N}:E \to E$ are of the form 
$$
S_f x = (f(x),x(1),x(2),{\dots}),\qquad f \in E^*. \,\, \|f\|\leq 1.  
$$
Thus, if  $E=c_0(\N)$, then partial isometries adjoint to $T_{\N}$ are indexed by  probability measures on  $\N$, while if $E=\ell^\infty(\N)$, then they are indexed by all normalized finitely additive measures on  $\N$.
\end{ex}

Let us fix a pair of mutually adjoint partial isometries  $T$, $S\in \B(E)$ acting on a Banach space  $E$. This pair naturally defines the following two mappings on  $\B(E)$: 
$$
\alpha(a):=Ta S, \qquad \alpha_*(a):=Sa T.
$$
It is straightforward to see that the mappings  $\alpha,\, \alpha_*:\B(E)\to \B(E)$ are mutually adjoint partial isometries on the Banach space  $\B(E)$.  We will discuss now some natural criteria for multiplicativity of the partial isometry $\alpha$ on subsets of  $\B(E)$. For a subset  $M\subseteq \B(E)$ we denote by  $M'$  its \emph{commutant}, that is  $M':=\{a\in \B(E): b a = ab \ \ \textrm{for each} \ \ b\in M\}$. 
 In the Hilbert space case  there is  a number of conditions equivalent to multiplicativity of $\alpha$ on $M$:
\begin{prop}\label{prop:characterizations of multiplicativity}
Let $T\in \B(H)$ be a partial isometry on a Hilbert space $H$ and put $\alpha(a):=TaT^*$ for $a\in \B(H)$. Let $M\subseteq \B(H)$ be a self-adjoint set, that is $M^*=M$.
The following conditions are equivalent:
\begin{itemize}
 \item[i)] $T^*T \in M'$,
  \item[ii)]  $Ta=\alpha(a)T$  for every   $a\in M$
	\item[iii)]   $aT^*=T^* \alpha(a)$  for every   $a\in M$.
      \item[iv)] $\alpha(ab)=\alpha(a)\alpha(b)$ for every  $a,b\in M$
 \end{itemize}
\end{prop}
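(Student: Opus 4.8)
The plan is to set $p:=T^*T$, the orthogonal projection onto the initial space $(\Ker T)^\perp$, and to exploit the partial isometry identities $TT^*T=T$ and $T^*TT^*=T^*$, which I would rewrite as $Tp=T$ and $pT^*=T^*$. Observe that condition (i) says precisely that $pa=ap$ for every $a\in M$. I would organise the argument as the cycle (i)$\Rightarrow$(ii)$\Rightarrow$(iv)$\Rightarrow$(i), and then handle (iii) separately by an adjoint symmetry.

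For (i)$\Rightarrow$(ii): since $\alpha(a)T=TaT^*T=Tap$ and $p\in M'$ gives $ap=pa$, we get $Tap=Tpa=(Tp)a=Ta$, whence $Ta=\alpha(a)T$. For (ii)$\Rightarrow$(iv) a one-line computation suffices: $\alpha(a)\alpha(b)=\alpha(a)TbT^*=(Ta)bT^*=TabT^*=\alpha(ab)$, the middle equality being (ii). Finally, for (ii)$\Leftrightarrow$(iii) I would note that $\alpha(a^*)^*=(Ta^*T^*)^*=\alpha(a)$, so taking adjoints in the relation $Ta^*=\alpha(a^*)T$ turns it into $aT^*=T^*\alpha(a)$; since $M=M^*$, the two families of conditions range over the same set and are therefore equivalent.

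The crux is (iv)$\Rightarrow$(i), and this is where both the hypothesis $M=M^*$ and the Hilbert-space structure become indispensable. Expanding (iv) via $\alpha(ab)=TabT^*$ and $\alpha(a)\alpha(b)=Ta\,p\,bT^*$ yields only the sandwiched relation $Ta(1-p)bT^*=0$ for all $a,b\in M$. To upgrade this to full commutation I would substitute $b=a^*$ (legitimate precisely because $M$ is self-adjoint) and use that $1-p$ is a self-adjoint idempotent, so that the resulting operator equals $Ta(1-p)(1-p)a^*T^*=XX^*$ with $X:=Ta(1-p)$. The Hilbert-space identity $\|X\|^2=\|XX^*\|$ then forces $X=0$, i.e. $Ta=Tap$ for every $a\in M$. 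Left-multiplying by $T^*$ gives $pa=pap$; applying this to $a^*\in M$ and taking adjoints gives $ap=pap$; combining the two yields $pa=ap$, which is (i).

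I expect the only genuine obstacle to be this last implication: the naive expansion of multiplicativity delivers merely $Ta(1-p)bT^*=0$, which is strictly weaker than commutation, and recovering $pa=ap$ essentially requires the positivity trick $XX^*=0\Rightarrow X=0$ together with a second appeal to $M=M^*$. The other three implications are routine manipulations of the partial isometry identities, and I would keep them brief.
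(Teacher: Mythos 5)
Your proof is correct and follows essentially the same route as the paper: the easy implications are the same routine manipulations of the partial isometry identities, and your key step for (iv) is the identical positivity trick, since your operator $X=Ta(1-p)$ is exactly the paper's $Ta-\alpha(a)T$ and both arguments amount to showing $XX^*=0$ using $a^*\in M$. The only cosmetic difference is that you draw the arrow (iv)$\Rightarrow$(i) directly, whereas the paper concludes (iv)$\Rightarrow$(ii) and then invokes its Banach-space Proposition \ref{relations commutation} to recover (i).
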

\begin{proof}
ii) and iii) are equivalent, as one is  the adjoint of the other. That i) is equivalent to ii) and iii), and that they imply iv)
is easy and follows from Proposition \ref{relations commutation} below, see also \cite[Proposition 2.2]{Leb-Odz}.
To see that iv) implies ii) let $a\in M$. Then
\begin{align*}
\|Ta-\alpha(a)T\|^2&=\|Taa^*T- \alpha(a) Ta^*T^*- TaT^*\alpha(a)^* +\alpha(a)TT^*\alpha(a)^*\|
\\
&=\|\alpha(aa^*)- \alpha(a) \alpha(a^*)- \alpha(a)\alpha(a^*) +\alpha(a)\alpha(a^*)\|=0,
\end{align*}
because $a^*\in M$ and therefore $\alpha(aa^*)=\alpha(a)\alpha(a^*)$.
\end{proof}\begin{rem}
If  $M$  contains the identity operator $1\in \B(H)$, the condition iv) in Proposition \ref{prop:characterizations of multiplicativity}
implies that $T$ is necessarily a partial isometry. Indeed, if $T\in \B(H)$ is any operator such that  $\alpha(\cdot):=T(\cdot )T^*$ satisfies this condition with $a=b=1$, then 
$
TT^*=\alpha(1)=\alpha(1\cdot 1)=\alpha(1)\alpha(1)=(TT^*)^2
$ is an orthogonal projection, and hence $T$ a partial isometry.
\end{rem}
\begin{cor}
\label{conditions-endom-Hilbert}
Let  $A\subseteq \B(H)$ be a  $*$-subalgebra and $T\in \B(H)$ a partial isometry. 
The map $\alpha(a):=TaT^*$, $a\in \B(H)$, restricts to an endomorphism of $A$ if and only if
$$
TA T^* \subseteq A, \qquad T^*T \in A'.
$$
\end{cor}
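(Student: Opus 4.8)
The plan is to unpack the phrase ``$\alpha$ restricts to an endomorphism of $A$'' into two separate requirements and then to recognize each of them as one of the conditions already shown equivalent in Proposition \ref{prop:characterizations of multiplicativity}. Since $\alpha(a)=TaT^*$ is by construction linear on all of $\B(H)$, its restriction $\alpha|_A$ is an algebra endomorphism of $A$ precisely when two things hold: (a) $\alpha$ maps $A$ into itself, i.e. $\alpha(A)=TAT^*\subseteq A$, and (b) $\alpha$ is multiplicative on $A$, i.e. $\alpha(ab)=\alpha(a)\alpha(b)$ for all $a,b\in A$. The first of these is literally the inclusion $TAT^*\subseteq A$ appearing in the statement, so there is nothing to prove for that half.

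For the second requirement I would invoke Proposition \ref{prop:characterizations of multiplicativity} with $M=A$. This is legitimate because a $*$-subalgebra is in particular a self-adjoint set, $A^*=A$, which is exactly the standing hypothesis of that proposition; moreover, on a Hilbert space the partial isometry $S$ adjoint to $T$ is forced to equal $T^*$ (Proposition \ref{t-part-isom}), so the map $\alpha$ considered here is the very same $\alpha(\cdot)=T(\cdot)T^*$ treated there. The equivalence of conditions i) and iv) of that proposition then yields that $\alpha$ is multiplicative on $A$ if and only if $T^*T\in A'$.

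Combining the two observations gives both implications at once: $\alpha|_A$ is an endomorphism of $A$ iff $\alpha(A)\subseteq A$ and $\alpha$ is multiplicative on $A$, which by the above is iff $TAT^*\subseteq A$ and $T^*T\in A'$. Concretely, for the forward direction one reads off $TAT^*\subseteq A$ from $\alpha(A)\subseteq A$ and gets $T^*T\in A'$ from multiplicativity via iv)$\Rightarrow$i); for the converse, $TAT^*\subseteq A$ secures that $\alpha$ lands in $A$, while i)$\Rightarrow$iv) restores multiplicativity, and linearity is automatic.

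As for the main obstacle, there is essentially none: all the substantive content, namely the equivalence between the commutation relation $T^*T\in A'$ and the multiplicativity of $\alpha$, was already carried out in Proposition \ref{prop:characterizations of multiplicativity}. The only points that deserve a moment's care before quoting it are verifying that its hypotheses are met here — the self-adjointness $A^*=A$ and the identification $S=T^*$ — and cleanly separating the ``maps into $A$'' clause from the ``multiplicative'' clause, the former being recorded directly as $TAT^*\subseteq A$.
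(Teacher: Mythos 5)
Your proposal is correct and follows exactly the paper's own argument: split "restricts to an endomorphism" into the invariance condition $TAT^*\subseteq A$ (which is immediate) and multiplicativity on $A$, and settle the latter by Proposition \ref{prop:characterizations of multiplicativity} (equivalence of i) and iv)) applied to the self-adjoint set $M=A$. Nothing is missing.
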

\begin{proof}
The map $\alpha$ preserves $A$ if and only if $TA T^* \subseteq A$. It is multiplicative on $A$ if and only if  $T^*T \in A'$
by Proposition \ref{prop:characterizations of multiplicativity}.
\end{proof}

In the general Banach space case only some  implications in the above equivalences remain valid:

 \begin{prop}\label{relations commutation}
Let $T\in \B(E)$ be a partial isometry with an adjoint $S\in \B(E)$, and let $M\subseteq \B(H)$. Put $\alpha(a):=TaS$ for $a\in \B(E)$. The following conditions are equivalent: 
 \begin{itemize}
 \item[i)] $ST \in M'$,
  \item[ii)]  $Ta=\alpha(a)T$  and   $aS=S \alpha(a)$  for every   $a\in M$.
   \end{itemize}
Each of these equivalent conditions imply that $\alpha(ab)=\alpha(a)\alpha(b)$,  $a,b\in M$.
 \end{prop}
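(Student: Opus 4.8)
The plan is to prove both the equivalence and the multiplicativity claim directly from the two defining relations $TST=T$ and $STS=S$ of a pair of mutually adjoint partial isometries (Definition~\ref{definicja czesciowej izometrii}), together with the observation that the condition $ST\in M'$ means precisely that $STa=aST$ for every $a\in M$. The point to keep in mind throughout is that no Hilbert-space structure is available: in general $S$ is not the unique adjoint of $T$ and $S\neq T^{*}$, so every manipulation must rest only on these algebraic identities and never on an adjointness property special to Hilbert space.

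For the implication $(i)\Rightarrow(ii)$ I would start from $Ta=TSTa$ (using $TST=T$) and push the commutation through: since $a\in M$ and $ST\in M'$ give $STa=aST$, we obtain $Ta=T(STa)=T(aST)=TaST=\alpha(a)T$. Symmetrically, beginning from $aS=aSTS$ (using $STS=S$) and substituting $aST=STa$ yields $aS=(aST)S=(STa)S=STaS=S\alpha(a)$. This establishes both equalities in $(ii)$.

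For $(ii)\Rightarrow(i)$ I would left-multiply the first identity $Ta=TaST$ by $S$ to get $STa=STaST$, and right-multiply the second identity $aS=STaS$ by $T$ to get $aST=STaST$. Comparing the two right-hand sides gives $STa=aST$, that is $ST\in M'$. Note that \emph{both} equalities of $(ii)$ are genuinely used here; this is the place where the Banach setting differs from the Hilbert one, where the single commutation $Ta=\alpha(a)T$ already sufficed in Proposition~\ref{prop:characterizations of multiplicativity}.

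Finally, for multiplicativity I would invoke $(i)$: for $a,b\in M$, write $\alpha(a)\alpha(b)=(TaS)(TbS)=Ta(ST)bS$ and move $ST$ past $b$ using $(ST)b=b(ST)$, so that $\alpha(a)\alpha(b)=Tab(ST)S=Tab(STS)=TabS=\alpha(ab)$, the last step again using $STS=S$. I do not expect a serious obstacle in the computations themselves; the only thing to watch is to invoke exclusively the relations $TST=T$, $STS=S$ and the commutation condition. The genuine conceptual content, reflecting the remark that only \emph{some} of the Hilbert-space equivalences survive, is that here multiplicativity is a consequence of $ST\in M'$ but not, in general, equivalent to it.
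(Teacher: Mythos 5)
Your proof is correct and follows essentially the same route as the paper: (i)$\Rightarrow$(ii) via $TST=T$ and $STS=S$ plus the commutation, (ii)$\Rightarrow$(i) by combining both identities to get $STa=aST$ (the paper writes this more compactly as $STa=S\alpha(a)T=aST$, which is the same computation), and multiplicativity by inserting $ST$ and commuting it past the weight (you absorb via $STS=S$ where the paper uses $TST=T$; the two are interchangeable here). No gaps.
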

\begin{proof} Assuming i), for any  $a\in M$, we have $
Ta=TST a =T a ST =\alpha(a)T$ and $aS=aSTS=STaS= S \alpha(a)$.  Hence i)$\Rightarrow$ii). 
Conversely, using ii), for any  $a\in M$, we get  $
STa=S\alpha(a)T = a ST$. Thus ii)$\Rightarrow$i).  Moreover,  the definition of a partial isometry along with  i) imply that for  $a,b \in M$ we have
$
\alpha(ab)=Ta bS= TST ab S=  T a ST bT=\alpha(a)\alpha(b).
$
\end{proof}
\begin{cor}
\label{conditions-endom}
Let  $A\subseteq \B(E)$ be an algebra, and $T$ and $S$ be  mutually adjoint partial isometries such that 
\begin{equation}\label{relasio de la demokrasio}
TA S \subseteq A, \qquad ST \in A'.
\end{equation}
Then the mapping  $\alpha: A \to A$ is an endomorphism of   $A$. 
\end{cor}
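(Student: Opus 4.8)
The plan is to verify directly the three defining properties of an endomorphism for the map $\alpha(a):=TaS$: that it carries $A$ into itself, that it is linear, and that it is multiplicative on $A$. The first two are immediate from the hypotheses, and the third is a direct specialization of Proposition \ref{relations commutation}, so the argument amounts to matching the data of the corollary to that proposition.

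First I would observe that linearity is automatic, since for the fixed operators $T,S\in\B(E)$ the assignment $a\mapsto TaS$ is linear in $a$. Next, the range condition $\alpha(A)\subseteq A$ is precisely the first inclusion in \eqref{relasio de la demokrasio}, namely $TAS\subseteq A$; here there is nothing to prove beyond reading off the hypothesis. Thus $\alpha$ is already a well-defined linear self-map of $A$, and only multiplicativity remains.

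For multiplicativity I would apply Proposition \ref{relations commutation} with $M=A$. The second condition in \eqref{relasio de la demokrasio}, $ST\in A'$, is exactly hypothesis i) of that proposition, and its conclusion then gives $\alpha(ab)=\alpha(a)\alpha(b)$ for all $a,b\in A$. Together with linearity and $\alpha(A)\subseteq A$, this shows that $\alpha$ is an endomorphism of $A$. I do not expect any genuine obstacle here: the substantive work, in which the partial-isometry relations $TST=T$ and $STS=S$ are used to collapse $TabS$ into $\alpha(a)\alpha(b)$, has already been carried out in Proposition \ref{relations commutation}, and the corollary is its direct consequence once the hypotheses are identified.
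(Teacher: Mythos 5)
Your proposal is correct and matches the paper's (implicit) argument exactly: the corollary is stated without proof precisely because, as you observe, $TAS\subseteq A$ gives that $\alpha$ is a well-defined linear self-map of $A$, and $ST\in A'$ is hypothesis i) of Proposition \ref{relations commutation}, whose final conclusion supplies multiplicativity. Nothing further is needed.
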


For a given algebra   $A$ and a partial isometry    $T$  relations  \eqref{relasio de la demokrasio}  may be satisfied by different partial isometries  $S$ that are adjoint to  $T$, and the corresponding endomorphisms of $A$ may be different, cf. Example  \ref{different partial isometries example1} below. However, as the next statement  shows,    for  commutative algebras  and $*$-endomorphisms of $^*$-algebras,   $\alpha:A\to A$ does not depend on the choice of operator $S$ in~\eqref{relasio de la demokrasio}.

\begin{prop}\label{stwierdzenie o jednoznaczności endomorfizmu} 
Let   $A\subseteq \B(E)$ be an algebra containing $1\in \B(E)$. Let $T$ be a partial isometry and   $S_1$, $S_2\in \B(E)$ be partial isometries adjoint to  $T$  such that 
$$
TA S_i \subseteq A, \qquad S_iT \in A',\qquad \textrm{ for } \ i=1,2.
$$
Restrictions of  maps  $\alpha_i(\cdot)=T(\cdot) S_i$, $i=1,2$,  to   $A$ coincide, that is they generate the same endomorphism  $\alpha:A \to A$  if and only if  $\alpha_1(1)= \alpha_2(1)$. And   $\alpha_1(1)= \alpha_2(1)$  if and only if $\alpha_1(1)\alpha_2(1) = \alpha_2(1)\alpha_1(1)$. In particular, $\alpha_1 =\alpha_2$ on $A$, whenever  $A$ is commutative 
or  when $A$ is a $^*$-algebra and both $\alpha_i$, $i=1,2$, are $*$-preserving.
\end{prop}
\begin{proof} Note that for $a\in A$ we have
$$
\alpha_1(a)= \alpha_1(a\cdot 1)= T(a\cdot 1)S_1 = TS_2T(a\cdot 1)S_1 =  (TaS_2)(T 1S_1) = 
\alpha_2(a)\alpha_1(1).
$$
By symmetry we also get $
\alpha_2(a) = \alpha_2(a)\alpha_2(1).
$
Thus, if $\alpha_1(1) = \alpha_2(1)$ then endomorphisms $\alpha_i : A \to A$ do coincide.
The foregoing relations also show that 
\begin{equation}
\label{e.d1}
\alpha_1(1) = \alpha_2(1)\alpha_1(1)\quad \text{and}\quad \alpha_2(1) = \alpha_1(1)\alpha_2(1).
\end{equation}
Therefore $\alpha_1(1)= \alpha_2(1)$  if and only $\alpha_1(1)\alpha_2(1) = \alpha_2(1)\alpha_1(1)$.

Finally, assume that  $A$ is a  $^*$-algebra  and  
 $\alpha_i$, $i=1,2$,  are  $*$-preserving. Then   
$
\alpha_i(1) = \alpha_i (1)^*$, $i=1,2$.
These equalities along with \eqref{e.d1} give us 
$
\alpha_1 (1)  = \alpha_1 (1)^* =\alpha_1(1)^*\alpha_2(1)^* = \alpha_1(1)\alpha_2(1) =\alpha_2 (1).  
$
\end{proof}

\begin{ex}\label{different partial isometries example1}  
Let  $E=c(\N)$  be the space of converging sequences with sup-norm. Then the operator
$
T(x(1),x(2),{\dots}):= (x(1),x(1), x(2), x(3),{\dots})
$
is an isometry on 
  $ E$. Let us consider the following two contractions  $S_1$ and $S_2$ on $E$: 
$$
S_1(x(1),x(2),{\dots}):=((x(1)+x(2))/2, x(3),{\dots}),
$$
$$  
S_2(x(1),x(2),{\dots}):=(x(2),x(3),{\dots}).
$$
These are partial isometries adjoint to $T$, and
$
S_1T=S_2T=1. 
$
Thus by Corollary \ref{conditions-endom},  $\alpha_1(a):=TaS_1$ and  $\alpha_2(a):=TaS_2$ are 
endomorphisms of  $A:=B(E)$.
These endomorphisms are different, since 
$\alpha_1(1)=TS_1\neq TS_2=\alpha_2(1)$.
\end{ex}

\begin{ex}\label{different partial isometries example}
Let  $E=\ell^1(\N)$ and 
$
Tx:=(x(2)-x(1),x(3),x(4),{\dots})
$ for $x\in E$.
In this situation we have the following complements of the kernel of  $T$ on which  $T$ is an isometry:
$$
M_\lambda=\{x\in E: x(1)=-\lambda x(2)\},\quad \lambda\in [0,\infty),\qquad   M_\infty=\{( x\in E: x(1)=0 \}. 
$$
In addition $T$ is a surjection. Therefore   $T$ is a partial isometry, 
and every partial isometry $S$ adjoint to $T$ is an isometry (see Proposition \ref{description of partial isometries}). 
These operators are of the form 
$
S_{\lambda}x= \left( -x(1)/(1+\lambda), \lambda x(1)/(1+\lambda), x(2), x(3),{\dots}\right)$ 
and  $S_\infty x=(0,x(1), x(2), x(3),{\dots}), 
$
where  $\lambda\in [0,\infty)$. Since  $TS_{\lambda}=1$, $\lambda\in [0,\infty]$,  the mappings 
$$
\alpha_\lambda(a)=Ta S_\lambda,\qquad \qquad a \in \B(E),\,\, \lambda\in [0,\infty],
$$
preserve the identity  $1\in \B(E)$. Thus in view of Proposition \ref{stwierdzenie o jednoznaczności endomorfizmu} 
whenever we have a unital subalgebra $A\subseteq \B(E)$ such that a pair $(T,S_\lambda)$ satisfy relations \eqref{relasio de la demokrasio},
the restriction of $\alpha_\lambda$ to $A$ does not depend on $\lambda$.
Let us consider two situations:

i) If  $A=\{1 z: z \in \C\}$, then all the pairs   $(T,S_\lambda)$, $\lambda\in [0,\infty]$ satisfy relations  \eqref{relasio de la demokrasio} and 
 all the mappings  $\alpha_\lambda$ define the identity endomorphism on  $A$.

ii) If   $A$ is the algebra of operators of multiplication by sequences that 
are constant beginning from the second coordinate (that is sequences of the form  $(a,b,b,b,{\dots})$,  $a,b \in \C$),
 then all the mappings  $\alpha_\lambda$, $\lambda\in [0,\infty]$, preserve   $A$ while only  $\alpha_0$ and $\alpha_\infty$ are multiplicative on  $A$.  Moreover,  $\alpha_0$ and $\alpha_\infty$ define different endomorphisms on  $A$:
$
\alpha_0(a,b,b,{\dots})=(a,b,b,{\dots})$,  $\alpha_\infty(a,b,b,{\dots})=(b,b,b,{\dots}). 
$ 
This  does not contradict Proposition  \ref{stwierdzenie o jednoznaczności endomorfizmu} since among the pairs  $(T, S_\lambda)$, $\lambda\in[0,\infty]$ only  $T$ and $S_\infty$ satisfy relations  \eqref{relasio de la demokrasio}.
\end{ex}

\subsection{Weighted shift operators on Banach spaces}\label{weighted-shift}
Now we are in a position to formulate  the definition of abstract weighted shift operators associated with endomorphisms, that generalizes \cite[3.1]{Anton_Lebed} and appears in \cite{kwa-phd}:
\begin{defn}\label{ban-weight-shift}
Let $E$ be a Banach space. Suppose that $A \subseteq \B(E)$ is an algebra containing $1\in \B(E)$ and let $T\in \B(E)$ be a partial isometry which admits an adjoint   partial isometry $S$ satisfying 
$$
TA S \subseteq A,\qquad 
 ST \in A^{\prime}.
$$
So that  $\alpha : A \to A$ given by  $\alpha (a):= T(a)S $, $a\in A$,
is an endomorphism of $A$,  by Corollary~\ref{conditions-endom}. We call operators of the form
$$
 aT,\,\, a\in A,
$$
\emph{(abstract) weighted shift operators} associated with the endomorphism $\alpha$. We refer to $A$ as to the \emph{algebra of weights}. The role of shift is played by $\alpha$.
\end{defn}

\begin{rem}
\label{r-weiht}
\begin{itemize}
 \item[i)] 
Recall  that if $A$ is  commutative   then the endomorphism  $\alpha$ in this definition does not depend on the choice of $S$, and the same is true when $A$ is a $^*$-algebra and $\alpha$ is a $*$-endomorphism (see Proposition~\ref{stwierdzenie o jednoznaczności endomorfizmu}).
In these cases we will say that $T$ \emph{generates the endomorphism} $\alpha$.
 \item[ii)] If $A$ is a commutative Banach algebra then  $\alpha : A \to A$ determines a partial map 
$\p$ on the spectrum $X$ of $A$ (Proposition~\ref{definicja alfa indukowanego}). In this case we also say that  $T$ \emph{generates the   partial map} $\p$.
\end{itemize}
\end{rem}

\begin{ex}\label{operator wazonego przesuniecia na l_p}  Let  $T_{\N}$, $S_\N$ be the classical unilateral shift operators on the space  $E$ of Example~\ref{operator przesuniecia na l_p}, and let  $A\subseteq \B(E)$ be the algebra of operators of multiplication by bounded sequences: $A\cong \ell^\infty(\N)$. Then $T_\N$ and $S_\N$ are mutually adjoint partial isometries and 
$$
T_\N\,A\, S_\N \subseteq A,\qquad S_\N \,A\,T_\N \subseteq A
$$
and, in particular,
$
T_\N S_\N$,  $ S_\N T_\N \in A \subseteq A'.
$ 
Therefore the  \emph{classical unilateral weighted shift operators}\index{operator!przesunięcia z wagą!klasyczny} $aT_\N$, $aS_\N$, $a\in A$ are abstract weighted shift operators with weights in  $A\cong \ell^\infty(\N)$. Note that being an abstract weighted shift depends on the algebra $A$ we consider. Also  $aT_\N$, $aS_\N$, $a\in A$, are not abstract weighted shifts in the sense of  \cite[3.1]{Anton_Lebed}, as none of $T_\N$ and $S_\N$ is invertible.
\end{ex}

A $*$-endomorphism $\alpha:\B(H)\to \B(H)$, where $H$ is a Hilbert space, is generated by a single (partial) isometry $T\in \B(H)$ if and only if Power's index of $\alpha$ is $1$, see subsection~\ref{endom-facts}. On the other hand, it is well known, see \cite[Theorem 1.11]{kwa-leb2} or \cite[Proposition 2.6]{kwa-pure}, that any $*$-endomorphism $\alpha$  of an arbitrary
 $C^*$-algebra $A$ can be represented in a faithful and non-degenerate way on some Hilbert space, 
so that it becomes generated by a partial isometry. As we show below this result generalizes to the Banach case. 
This means that the family of weighted shift operators presented in Definition~\ref{ban-weight-shift} is vast -- in fact, up to representation,  for \emph{any contractive endomorphism} $\alpha: A\to A$ there \emph{exists a weighted shift operator} associated with it. 
Note that if $\alpha(\cdot)=T (\cdot)S$ is implemented by partial isometries $T$, $S$, it  has to be contractive,
 that is we necessarily have $\|\alpha\|\leq 1$.
\begin{prop}
\label{rep-delta-B} 
Let $A$ be a unital Banach algebra and let $\alpha:A\to A$ be a contractive endomorphism. Then there is a Banach space $E$, a unital isometric homomorphism $\pi:A\to \B(E)$ and mutually adjoint partial isometries $S$, $T\in \B(E)$ such that
$$
 \pi(\alpha (a)) =T\pi(a)S,  \text{ for }a\in A \text{ and } ST\in \pi(A)'.
$$ 
Thus for each $a \in A$ the operator $\pi(a)T$ is an abstract weighted shift  associated with the endomorphism (isometricially conjugated with) $\alpha$.
\end{prop}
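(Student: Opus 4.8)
The plan is to realize $\alpha$ as a (backward) weighted shift on a direct sum of compressions of $A$, mimicking the Hilbert-space/$C^*$-construction of \cite{kwa-leb2}, \cite{kwa-pure}, while taking care of two purely Banach-algebraic difficulties absent in those references: the representation $\pi$ must be \emph{unital} even though $\alpha$ need not preserve the identity, and the shift operators must be genuine partial isometries in the sense of Definition~\ref{definicja czesciowej izometrii} even though idempotents of a Banach algebra may have norm strictly bigger than one. Both difficulties disappear once we normalize $\|1\|=1$ (the standard convention for unital Banach algebras): contractivity of $\alpha$ then forces $\|\alpha^n(1)\|\le \|\alpha\|^n\|1\|\le 1$ for every $n$, and it is exactly this norm bound on the idempotents $\alpha^n(1)$ that makes the operators below contractions. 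I regard isolating this normalization as the conceptual heart of the argument.

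First I would set $E_n:=\alpha^n(1)A$ for $n\ge 0$, a closed subspace of $A$ (it is the range of the bounded idempotent $b\mapsto \alpha^n(1)b$, with $E_0=A$), and let $E:=\bigoplus_{n\ge 0} E_n$ be the $c_0$-direct sum with the supremum norm. The relation $\alpha^{n}(1)\alpha^{n+1}(1)=\alpha^{n+1}(1)$ gives the nesting $E_{n+1}\subseteq E_n$. I would then define $\pi:A\to\B(E)$ by the twisted diagonal formula $\pi(a)(\xi_n)_n:=(\alpha^n(a)\xi_n)_n$; the identities $\alpha^n(1)\alpha^n(a)=\alpha^n(a)$ show that $\pi(a)$ maps $E$ into $E$ and that $\pi$ is multiplicative, while $\alpha^n(1)\xi_n=\xi_n$ for $\xi_n\in E_n$ gives $\pi(1)=\mathrm{id}_E$, so $\pi$ is \emph{unital}. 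Isometry of $\pi$ follows because a diagonal operator on a $c_0$-sum has norm equal to the supremum of the norms of its entries: the $n=0$ entry is left multiplication by $a$ on $A$, of norm $\|a\|$ (take $\xi_0=1$), and every other entry has norm $\le\|\alpha^n(a)\|\le\|a\|$.

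Next I would introduce the backward shift $T(\xi_0,\xi_1,\dots):=(\xi_1,\xi_2,\dots)$ and the compressed, twisted forward shift $S$ defined by $(S\xi)_0:=0$ and $(S\xi)_n:=\alpha^n(1)\xi_{n-1}$ for $n\ge 1$. The nesting $E_{n+1}\subseteq E_n$ makes $T$ map $E$ into $E$, and $\alpha^n(1)\xi_{n-1}\in E_n$ makes $S$ map $E$ into $E$; both are contractions precisely because $\|\alpha^n(1)\|\le 1$. A direct bookkeeping computation then shows that $ST$ is the contractive idempotent $(\xi_n)_n\mapsto(0,\xi_1,\xi_2,\dots)$ (here one uses $\alpha^n(1)\xi_n=\xi_n$ on $E_n$), that $TS=\pi(\alpha(1))$, and consequently that $TST=T$ and $STS=S$; hence $T$ and $S$ are mutually adjoint partial isometries. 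The same computation yields $T\pi(a)S=\pi(\alpha(a))$ for all $a\in A$, and $ST\in\pi(A)'$, since the projection $(\xi_n)_n\mapsto(0,\xi_1,\dots)$ visibly commutes with every diagonal operator $\pi(a)$. By Corollary~\ref{conditions-endom} the map $b\mapsto TbS$ restricts on $\pi(A)$ to the endomorphism $\pi\circ\alpha\circ\pi^{-1}$, which is isometrically conjugate to $\alpha$ because $\pi$ is an isometric isomorphism onto its image; thus each $\pi(a)T$ is an abstract weighted shift associated with it in the sense of Definition~\ref{ban-weight-shift}.

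I expect the genuine obstacle to be the verification that $S$ is a true partial isometry and that $TS$, $ST$ are \emph{contractive} idempotents: this is exactly the point at which a general Banach algebra could fail, since an idempotent $\alpha^{n}(1)$ of norm $>1$ would make $S$ non-contractive and, by Proposition~\ref{description of partial isometries}, would obstruct the required contractive projection onto the range of $T$ (this projection is precisely $TS=\pi(\alpha(1))$, whose norm $\sup_n\|\alpha^{n+1}(1)\|$ must be at most one). The normalization $\|1\|=1$, through $\|\alpha^n(1)\|\le 1$, is what removes this obstruction, and all remaining steps are the routine shift identities sketched above. A secondary point that must be handled from the outset is that the naive twisted-diagonal representation on $\bigoplus_n A$ is \emph{not} unital when $\alpha(1)\neq 1$, which is the reason for passing at once to the nondegenerate subspace $\bigoplus_n \alpha^n(1)A$.
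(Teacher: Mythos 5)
Your construction is essentially the paper's own proof: the same twisted diagonal representation $\pi(a)(\xi_n)_n=(\alpha^n(a)\xi_n)_n$ on a sup-normed direct sum of the spaces $\alpha^n(1)A$, the same backward shift $T$ and twisted forward shift $S$, and the same identities $TS=\pi(\alpha(1))$, $ST=(\xi_n)_n\mapsto(0,\xi_1,\xi_2,\dots)$. The only immaterial differences are that the paper takes the bounded ($\ell^\infty$-type) direct sum rather than the $c_0$-sum and checks isometry of $\pi$ by evaluating at $(1,\alpha(1),\alpha^2(1),\dots)$ rather than at $(1,0,0,\dots)$; both variants work, and your explicit verification of the partial-isometry relations fills in what the paper leaves as "readily checked."
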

 \begin{proof} We consider the Banach space 
$$
E:=\{x=(x(0),x(1),{\dots}): x(n)\in \alpha^{n}(1)A, \text{ for }n\geq 0, \text{ and } \|x\|:=\sup_{n\in \N} \|x(n)\| < \infty\}.
$$ 
Then the formula $(\pi(a)x):=\alpha^{n}(a)x(n)$  defines a unital homomorphism $\pi:A\to \B(E)$. Using that $\alpha$ is contractive and  that $\pi(a)(1,\alpha(1),\alpha^2(1),{\dots})= (a, \alpha(a),\alpha^2(a),{\dots})$ one gets that $\|\pi(a)\|=\|a\|$ for every $a\in A$. Hence 
$\pi$ is isometric.  It is readily check that putting 
$$
T(x(0),x(1),{\dots}):=(x(1),x(2),{\dots}),\quad  S(x(0),x(1),{\dots}):=(0, \alpha(1)x(0),\alpha^2(1)x(1),{\dots}),
$$
we get the desired  mutually adjoint partial isometries $S,T \in \B(E)$. 
\end{proof}
The main idea behind introducing abstract weighted shifts 
is that some of their spectral properties can be investigated in terms 
of the associated non-commutative dynamical system $(A,\alpha)$. In this paper we focus on spectral radii. 
As the next proposition shows  for these spectral characteristics the relationship between
 $aT:E\rightarrow E$ and weighted endomorphism
$a\alpha:A \rightarrow A$ is as literal as one may think. Moreover, it also tells us that  the spectral radius depends only on the values of the 'cocycle' generated by $a$ and $\alpha$, i.e.  the sequence of elements $a\alpha(a) {\dots}\, \alpha^{n}(a)$, $ n=1,2, {\dots}$ (cf. Subsection~\ref{endom-initial}).
\begin{prop}\label{first radius}
Suppose that $aT$, $a\in A$, is an  abstract weighted shift operator and $\alpha:A\to A$  is an associated endomorphism. Then for the spectral radius $r(aT)$ of the operator $aT$ we have
$$
r(aT)=r(a\alpha)=\lim_{n\to \infty}\| a\alpha(a) {\dots}\, \alpha^{n}(a)\|^{\frac{1}{n}}
$$ 
where $r(a\alpha)$ is the spectral radius of the weighted endomorphism $a\alpha:A\to A$ treated as an element of $\B(A)$.
In particular, $r(aT)= r(\alpha^k(a)T)= r(\alpha^k(a)\alpha)$ for every $k\in \N$.
\end{prop}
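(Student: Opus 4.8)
The plan is to reduce everything to the single sequence $a_n:=a\,\alpha(a)\cdots\alpha^n(a)$ and to show that both $r(aT)$ and $r(a\alpha)$ equal $\rho:=\lim_n\|a_n\|^{1/(n+1)}$, whose existence I establish along the way.

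First I would record the algebraic consequences of the defining relations. Since $ST\in A'$, Proposition \ref{relations commutation} gives $Ta=\alpha(a)T$ for all $a\in A$; iterating yields $T^na=\alpha^n(a)T^n$. A one-line induction then produces the two basic formulas
\begin{equation*}
(aT)^{n+1}=a_n\,T^{n+1},\qquad (a\alpha)^{n+1}(b)=a_n\,\alpha^{n+1}(b)\quad(b\in A),
\end{equation*}
as well as $T^{n+1}S^{n+1}=\alpha^{n+1}(1)$ (by induction, using $T(T^nS^n)S=T\alpha^n(1)S=\alpha^{n+1}(1)$). Because $T$ and $S$ are contractions, $\alpha(\cdot)=T(\cdot)S$ satisfies $\|\alpha\|\le1$, and these formulas immediately give the upper bounds $\|(aT)^{n+1}\|\le\|a_n\|$ and $\|(a\alpha)^{n+1}\|\le\|a_n\|$.

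Next I would obtain existence of $\rho$ together with matching lower bounds. The cocycle identity $a_{n+m+1}=a_n\,\alpha^{n+1}(a_m)$ shows that $c_n:=\|a_{n-1}\|$ is submultiplicative, so $\lim_n\|a_n\|^{1/(n+1)}=\rho$ exists by Fekete's lemma. For the lower bounds, multiplying the first formula on the right by $S^{n+1}$ and evaluating the second at $b=1$ gives $(aT)^{n+1}S^{n+1}=a_n\alpha^{n+1}(1)=(a\alpha)^{n+1}(1)$, so with $\|S\|\le1$ and $\|1\|=1$,
\begin{equation*}
\|a_n\,\alpha^{n+1}(1)\|\le\min\bigl\{\|(aT)^{n+1}\|,\ \|(a\alpha)^{n+1}\|\bigr\}\le\|a_n\|.
\end{equation*}
By Gelfand's formula $r(b)=\lim_n\|b^n\|^{1/n}$ applied to $aT\in\B(E)$ and $a\alpha\in\B(A)$, both spectral radii are thus squeezed between $\lim_n\|a_n\alpha^{n+1}(1)\|^{1/(n+1)}$ and $\rho$.

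The crux is to show the extra idempotent factor does not lower the growth rate, i.e. $\lim_n\|a_n\,\alpha^{n+1}(1)\|^{1/(n+1)}=\rho$; this is the main obstacle, since in a general Banach algebra $a_n\alpha^{n+1}(1)\ne a_n$ (we only have $a_n\alpha^n(1)=a_n$). I would resolve it by writing $a_{n+1}=a_n\alpha^{n+1}(a)=\bigl(a_n\alpha^{n+1}(1)\bigr)\alpha^{n+1}(a)$, whence $\|a_{n+1}\|\le\|a\|\,\|a_n\alpha^{n+1}(1)\|$; taking $(n+1)$-th roots and letting $n\to\infty$ forces $\liminf_n\|a_n\alpha^{n+1}(1)\|^{1/(n+1)}\ge\rho$, closing the squeeze and yielding $r(aT)=r(a\alpha)=\rho$. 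Finally, for the displayed ``in particular'' identities I would note that the cocycle generated by $\alpha^k(a)$ is $\alpha^k(a_n)$, so $\|\alpha^k(a_n)\|\le\|a_n\|$ bounds its rate above by $\rho$, while $a_n=a_{k-1}\alpha^k(a_{n-k})$ bounds it below by $\rho$, giving $r(\alpha^k(a)T)=r(\alpha^k(a)\alpha)=\rho=r(aT)$.
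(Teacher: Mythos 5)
Your proposal is correct and follows essentially the same route as the paper: establish $Ta=\alpha(a)T$ from Proposition \ref{relations commutation}, derive the product formulas $(aT)^{n+1}=a_nT^{n+1}$ and $(a\alpha)^{n+1}(b)=a_n\alpha^{n+1}(b)$, and squeeze both $\|(aT)^{n}\|$ and $\|(a\alpha)^{n}\|$ between shifted copies of $\|a_n\|$ up to a factor $\|a\|$ before applying Gelfand's formula. The only cosmetic difference is that you route the lower bounds through the quantity $\|a_n\alpha^{n+1}(1)\|$ (evaluating at $b=1$ and using $a_{n+1}=(a_n\alpha^{n+1}(1))\alpha^{n+1}(a)$), whereas the paper evaluates $(a\alpha)^n$ at $b=a$ directly and then passes to $\|(aT)^n\|$ via $T^nbS^n$; both close the same squeeze.
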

\begin{proof} 
 The formula  $r(a\alpha)=\lim_{n\to \infty}\| a\alpha(a) {\dots}\, \alpha^{n}(a)\|^{\frac{1}{n}}$ is a consequence  of Gelfand's formula $r(a\alpha)=\lim_{n\to \infty}\|(a\alpha)^{n}\|_{\B(A)}^{\frac{1}{n}}$ and  the following two inequalities:  
$$
\|(a\alpha)^{n+1}\|_{\B(A)}=\| a\alpha(a) {\dots}\, \alpha^{n}(a)\alpha^{n+1}\|_{\B(A)}\leq \| a\alpha(a) {\dots}\, \alpha^{n}(a)\| 
$$
and 
$$
\| a\alpha(a) {\dots}\, \alpha^{n}(a)\|\leq \| a\alpha(a) {\dots}\, \alpha^{n-1}(a)\alpha^{n}\|_{\B(A)} \|a\|.
 $$
By Proposition \ref{relations commutation}, we have $Ta= \alpha(a)T$, and hence by induction
we have $T^ka= \alpha^k(a)T^k$.  Using this and that $T$ is a contraction, we get
$$
\|(aT)^{n+1}\|=\|a\alpha(a){\dots}\alpha^{n}(a)T^{n+1} \|\leq  \|a\alpha(a){\dots}\alpha^{n}(a) \|.
$$
On the other hand, using that $T$ and $S$ are contractions we have
\begin{align*}
\|a\alpha(a){\dots}\alpha^{n}(a) \| &\leq \|a\alpha(a){\dots}\alpha^{n-1}(a)\alpha^{n}\|_{\B(A)} \|a\|
\\
&=\sup_{\|b\|=1}\|a\alpha(a){\dots}\alpha^{n-1}(a)T^{n}bS^{n }\| \|a\|
\\
&\leq \|a\alpha(a){\dots}\alpha^{n-1}(a)T^{n}\| \|a\|
\\
&=\|(aT)^{n}\| \|a\|.
\end{align*}
Thus $r(aT)=\lim_{n\to \infty}\| (aT)^n\|^{\frac{1}{n}}=\lim_{n\to \infty}\| a\alpha(a) {\dots}\, \alpha^{n}(a)\|^{\frac{1}{n}}$.

Proceeding in a similar way one can show that  
$$
\lim_{n\to \infty}\| a\alpha(a) {\dots}\, \alpha^{n-1}(a)\|^{\frac{1}{n}}=\lim_{n\to \infty}\| \alpha(a)\alpha^2(a) {\dots}\, \alpha^{n}(a)\|^{\frac{1}{n}},$$
and therefore 
$r(aT)=r(\alpha(a)T)$. Thus by induction,  $r(aT)=r(\alpha^k(a)T)$ for every  $k\in \N$.
  \end{proof}
  \begin{cor}\label{cor:spectral_radius_contractive_endomor}
	Let $\alpha:A\to A$ be a contractive endomorphism of a unital Banach algebra $A$. For any $a\in A$  and $k\in \N$  we have 
$$
r(a\alpha)= r(\alpha^k (a)\alpha) =\lim_{n\to \infty}\| a\alpha(a) {\dots}\, \alpha^{n}(a)\|^{\frac{1}{n}}.
$$
\end{cor}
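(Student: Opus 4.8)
The plan is to read the corollary off directly from the two preceding propositions, using the representation of $\alpha$ by partial isometries to reduce to the weighted-shift situation already analyzed in Proposition~\ref{first radius}. First I would invoke Proposition~\ref{rep-delta-B}: since $A$ is unital and $\alpha$ is contractive, there are a Banach space $E$, a unital isometric homomorphism $\pi\colon A\to\B(E)$, and mutually adjoint partial isometries $T,S\in\B(E)$ with $\pi(\alpha(a))=T\pi(a)S$ for all $a\in A$ and $ST\in\pi(A)'$. Because $\pi$ is isometric and $A$ is complete, $\pi(A)$ is a closed unital subalgebra of $\B(E)$ and $\pi\colon A\to\pi(A)$ is an isometric isomorphism of Banach algebras. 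The relations $T\pi(A)S\subseteq\pi(A)$ and $ST\in\pi(A)'$ show, via Corollary~\ref{conditions-endom}, that $\widetilde\alpha(\cdot):=T(\cdot)S$ restricts to an endomorphism of $\pi(A)$, and by construction $\widetilde\alpha\circ\pi=\pi\circ\alpha$, i.e.\ $\widetilde\alpha=\pi\alpha\pi^{-1}$ on $\pi(A)$. Thus each $\pi(a)T$ is an abstract weighted shift associated with $\widetilde\alpha$.

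Next I would apply Proposition~\ref{first radius} to these weighted shifts. It yields, for every $a\in A$ and $k\in\N$,
\[
r(\pi(a)T)=r(\pi(a)\widetilde\alpha)=\lim_{n\to\infty}\big\|\pi(a)\,\widetilde\alpha(\pi(a))\cdots\widetilde\alpha^{n}(\pi(a))\big\|^{1/n},
\]
together with $r(\pi(a)\widetilde\alpha)=r(\widetilde\alpha^{k}(\pi(a))\widetilde\alpha)$. It then remains to transport these identities back along $\pi$. Since $\pi$ is an isometric homomorphism, $\widetilde\alpha^{j}(\pi(a))=\pi(\alpha^{j}(a))$, whence $\pi(a)\widetilde\alpha(\pi(a))\cdots\widetilde\alpha^{n}(\pi(a))=\pi\big(a\,\alpha(a)\cdots\alpha^{n}(a)\big)$, whose norm equals $\|a\,\alpha(a)\cdots\alpha^{n}(a)\|$; this settles the limit formula. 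For the spectral radii, conjugation by $\pi$ is an isometric algebra isomorphism $\B(A)\to\B(\pi(A))$ carrying $a\alpha$ to $\pi(a)\widetilde\alpha$ and $\alpha^{k}(a)\alpha$ to $\widetilde\alpha^{k}(\pi(a))\widetilde\alpha$, so it preserves spectra; therefore $r(a\alpha)=r(\pi(a)\widetilde\alpha)$ and $r(\alpha^{k}(a)\alpha)=r(\widetilde\alpha^{k}(\pi(a))\widetilde\alpha)$, and the three quantities coincide.

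The computation is essentially routine once the representation is in place, so the only point demanding care is the bookkeeping of the conjugation: one must check that $a\alpha$, viewed in $\B(A)$, corresponds under $\pi$ to $\pi(a)\widetilde\alpha$, viewed in $\B(\pi(A))$, and that conjugation by an isometric isomorphism leaves the spectral radius unchanged. I expect this to be the main (if minor) obstacle. As an alternative route that bypasses the representation entirely, I would note that the limit formula already follows from Gelfand's formula $r(a\alpha)=\lim_{n}\|(a\alpha)^{n}\|_{\B(A)}^{1/n}$ together with the two sandwiching estimates $\|(a\alpha)^{n+1}\|_{\B(A)}\le\|a\,\alpha(a)\cdots\alpha^{n}(a)\|$ and $\|a\,\alpha(a)\cdots\alpha^{n}(a)\|\le\|a\,\alpha(a)\cdots\alpha^{n-1}(a)\alpha^{n}\|_{\B(A)}\|a\|$ appearing in the proof of Proposition~\ref{first radius}, neither of which uses $T$ or $S$; and the shift-invariance $r(a\alpha)=r(\alpha^{k}(a)\alpha)$ follows by comparing the cocycle norms after re-indexing, using contractivity of $\alpha$ to get $\|\alpha^{k}(a)\cdots\alpha^{k+n}(a)\|=\|\alpha^{k}(a\,\alpha(a)\cdots\alpha^{n}(a))\|\le\|a\,\alpha(a)\cdots\alpha^{n}(a)\|$ for one inequality and the dropped-leading-factor estimate $\|a\,\alpha(a)\cdots\alpha^{n}(a)\|\le\|a\|\,\|\alpha(a)\cdots\alpha^{n}(a)\|$ for the reverse.
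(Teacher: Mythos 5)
Your proposal is correct and follows essentially the same route as the paper: the paper's proof is exactly ``by Proposition~\ref{rep-delta-B} we may view $\alpha$ as being associated with an abstract weighted shift, and then the assertion follows from Proposition~\ref{first radius}'' (with a footnote noting that one may alternatively adapt the proof of Proposition~\ref{first radius} directly, which is your second route). You merely spell out the conjugation bookkeeping that the paper leaves implicit.
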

\begin{proof}
By Proposition \ref{rep-delta-B} we may view $\alpha$ as being associated with an abstract weighted shift, and then the assertion follows from 
Proposition \ref{first radius}.\footnote{Alternatively, one may readilly adapt the proof of Proposition \ref{first radius}.}
\end{proof}

\section{Ergodic measures for partial maps and limits of empirical averages}
\label{erg-m-part}


We start by introducing some notation for the partial dynamical system $(X,\p)$  (cf. Definition~\ref{odwzorowania czesciowe label}).
    For 
  $n\in \N$    we denote by 
$\Delta_n$ and $\Delta_{-n}$  respectively the domain and the range of the partial map $\p^n$.  Namely, the sets  $\Delta_n$  are given by inductive formulae 
$
\Delta_0=X$,   $\Delta_n:=\p^{-1}(\Delta_{n-1})$,  $n>0$, and  then $\Delta_{-n}:=\p^{n}(\Delta_n)$.
Note that for $n>0$, the sets $\Delta_n$ are clopen while  $\Delta_{-n}$, in general, are only closed.  For all  $n,m\in \N$  one has 
$$
 \p^n:\Delta_n\rightarrow  \Delta_{-n},
$$
$$
 \p^n (\p^m(x)) = \p^{n+m}(x),\qquad x\in
\Delta_{n+m}.
$$
Note that if  $\p$ is a partial map dual to an endomorphism  $\alpha$, cf. Definition \ref{odwzorowania czesciowe label}, then   $\p^n$  is nothing but the map dual to the endomorphism   $\alpha^n$. 

\begin{defn}
 We define the  \emph{essential domain} of the partial map $\p$ as the set
$$
 \Delta_{\infty}:=\bigcap_{n\in \Z}\Delta_n.
$$
Then the  map 
$\p:\Delta_{\infty}\to \Delta_{\infty}$ is everywhere defined and surjective. 
\end{defn}

Standard definitions of invariant and ergodic measures for full maps make sense also for partial maps. Thus we define them this way. 
However, we could  equivalently define them as the corresponding notions for the restricted full map $\p:\Delta_{\infty}\to \Delta_{\infty}$.
\begin{defn}\label{essential_domain}
Let $(X,\p)$ be a partial dynamical system. Let $\mu$ be a  normalized Radon measure on $X$. We say that   $\mu$  on $X$ is \emph{$\p$-invariant}, if
$
\mu(\p^{-1}(\omega))=\mu(\omega)$, for every Borel $\omega\subseteq  X$.
If in addition for every Borel $\omega\subseteq X$   we have 
$$
\p^{-1}(\omega)=\omega \ \ \Longrightarrow \ \   \mu(\omega)=0 \ \text{or} \ \mu(\omega)=1,
$$ 
we  call $\mu$ \emph{$\p$-ergodic}. We denote by ${\rm Inv}(X,\p)$
the set of all normalized
 $\p$-invariant Radon measures on $X$, and by  ${\rm Erg}(X,\p)$ the  measures in ${\rm Inv} (X,\p)$ that are $\p$-ergodic. 
\end{defn}
\begin{lem}\label{lem1.0}
If $\mu \in {\rm Inv}(X,\p)$, then  $\supp \mu \subseteq \Delta_{\infty}$.
Thus one may consider $\p$-invariant (ergodic) measures for the partial map $\p : \Delta\to X$ as 
$\p$-invariant (ergodic) measures for full map
 $\p:\Delta_{\infty}\to \Delta_{\infty}$\,{\rm :} 
$$
 {\rm Inv}(X,\p)={\rm Inv}(\Delta_{\infty},\p), \qquad  {\rm Erg}(X,\p)={\rm Erg}(\Delta_{\infty},\p).
$$
\end{lem}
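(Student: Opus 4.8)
The plan is to reduce everything to the single assertion that $\mu(\Delta_n)=1$ for every $n\in\Z$; once this is established, both $\supp\mu\subseteq\Delta_\infty$ and the two equalities of measure sets follow by identifying $\mu$ with its restriction to $\Delta_\infty$. Since $\Z$ is countable and $\Delta_\infty=\bigcap_{n\in\Z}\Delta_n$, the claim $\mu(\Delta_\infty)=1$ is equivalent, by countable subadditivity applied to the complements, to $\mu(\Delta_n)=1$ for all $n$. Moreover $\Delta_\infty$ is closed, being an intersection of the clopen sets $\Delta_n$, $n\ge 0$, and the closed sets $\Delta_{-n}$, $n\ge 1$; hence $X\setminus\Delta_\infty$ is an open $\mu$-null set and so $\supp\mu\subseteq\Delta_\infty$.

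For the nonnegative indices I would argue by induction, the base case $\mu(\Delta_0)=\mu(X)=1$ being trivial. Assuming $\mu(\Delta_{n-1})=1$ and using that $\Delta_{n-1}$ equals $X$ or is clopen, hence Borel, invariance applied to $\omega=\Delta_{n-1}$ gives $\mu(\Delta_n)=\mu(\p^{-1}(\Delta_{n-1}))=\mu(\Delta_{n-1})=1$; in particular $\p^{-1}(X)=\Delta$ forces $\mu(\Delta)=1$. For the negative indices the key observation is that $\Delta_{-n}=\p^n(\Delta_n)$ is exactly the range of $\p^n$, so $(\p^n)^{-1}(X\setminus\Delta_{-n})=\emptyset$. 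Since invariance under $\p$ iterates to invariance under $\p^n$ (preimages compose, $(\p^n)^{-1}=\p^{-1}\circ\cdots\circ\p^{-1}$), and $X\setminus\Delta_{-n}$ is open and hence Borel, we obtain $\mu(X\setminus\Delta_{-n})=\mu((\p^n)^{-1}(X\setminus\Delta_{-n}))=0$, i.e. $\mu(\Delta_{-n})=1$. This completes $\mu(\Delta_\infty)=1$.

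With $\supp\mu\subseteq\Delta_\infty$ in hand I would identify each $\mu\in{\rm Inv}(X,\p)$ with its restriction to $\Delta_\infty$ and, conversely, extend any measure on $\Delta_\infty$ by zero to $X$. For invariance both directions are routine: for Borel $\omega'\subseteq\Delta_\infty$ one has $\p^{-1}(\omega')\cap\Delta_\infty=(\p|_{\Delta_\infty})^{-1}(\omega')$ because $\p(\Delta_\infty)\subseteq\Delta_\infty$, and since $\mu(X\setminus\Delta_\infty)=0$ the two invariance equations coincide. The backward ergodicity direction is equally easy: if $\p^{-1}(\omega)=\omega$ in $X$, then $\omega':=\omega\cap\Delta_\infty$ is strictly $\p|_{\Delta_\infty}$-invariant and $\mu(\omega)=\mu(\omega')$, so $\p|_{\Delta_\infty}$-ergodicity yields $\mu(\omega)\in\{0,1\}$.

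The step I expect to be the main obstacle is the forward ergodicity direction, because a strictly $\p|_{\Delta_\infty}$-invariant set $\omega'\subseteq\Delta_\infty$ need not extend to a strictly $\p$-invariant subset of $X$: points of the domain lying outside the range of some iterate can belong to $\p^{-1}(\omega')$ without belonging to $\omega'$. To overcome this I would pass to $\omega:=\limsup_n \p^{-n}(\omega')=\bigcap_{N}\bigcup_{n\ge N}\p^{-n}(\omega')$. Since preimages commute with unions and intersections and compose, one checks $\p^{-1}(\omega)=\omega$ exactly, so $\omega$ is strictly $\p$-invariant in $X$. Intersecting with $\Delta_\infty$ and using $\p^{-n}(\omega')\cap\Delta_\infty=\omega'$ gives $\omega\cap\Delta_\infty=\omega'$, whence $\mu(\omega)=\mu(\omega')$. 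Applying $X$-ergodicity to $\omega$ then forces $\mu(\omega')\in\{0,1\}$, which is precisely $\p|_{\Delta_\infty}$-ergodicity. This is the standard device of replacing an almost-invariant set by a genuinely invariant one, here adapted to the partial setting.
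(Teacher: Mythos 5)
Your proof is correct, and for the core claim it follows the same strategy as the paper: reduce everything to $\mu(\Delta_n)=1$ for all $n\in\Z$ and prove the nonnegative indices by the identical induction using $\Delta_n=\p^{-1}(\Delta_{n-1})$. Where you genuinely diverge is the negative indices: the paper argues inductively via $\Delta_{-k}=\p(\Delta_{-k+1}\cap\Delta_1)$ and the inclusion $\p^{-1}(\p(S))\supseteq S$ to get $\mu(\Delta_{-k})\ge\mu(\Delta_{-k+1})$, whereas you observe that $(\p^n)^{-1}(X\setminus\Delta_{-n})=\emptyset$ because $\Delta_{-n}$ is exactly the range of $\p^n$, so iterated invariance kills the complement in one stroke. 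Your version is shorter and avoids the induction on the negative side entirely; both are valid. You also go beyond the paper by actually proving the identifications ${\rm Inv}(X,\p)={\rm Inv}(\Delta_\infty,\p)$ and ${\rm Erg}(X,\p)={\rm Erg}(\Delta_\infty,\p)$, which the paper dismisses with ``Thus one may consider\dots''; in particular you correctly isolate the only non-routine direction (a strictly $\p|_{\Delta_\infty}$-invariant set need not be the trace of a strictly $\p$-invariant subset of $X$) and resolve it with the standard $\limsup_n\p^{-n}(\omega')$ device, checking that $\p^{-1}(\omega)=\omega$ and $\omega\cap\Delta_\infty=\omega'$. That extra care is a genuine improvement in completeness over the published argument.
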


\begin{proof} By continuity of measure it suffices to show that   $\mu(\Delta_n)=1$ for every $n\in \Z$.
 We do it inductively. The zero step is obvious because  $\mu(\Delta_0)=\mu(X)=1$. 
However, if  we have  $\mu(\Delta_{k-1})=1$ for some $k>0$, then using equality $\Delta_k=\p^{-1}(\Delta_{k-1})$ and $\p$-invariance of $\mu$ we get
 $\mu(\Delta_k)=\mu(\Delta_{k-1})=1.$ Hence $\mu(\Delta_n)=1$ for all $n\geq 0$. 
\\
Now let us notice that for every $k>0$ we have $\Delta_{-k}=\p(\Delta_{-k+1}\cap \Delta)$ and therefore
$$
\mu(\Delta_{-k})=\mu\big(
\p^{-1}(\Delta_{-k})\big)=\mu\big(\p^{-1}(\p(\Delta_{-k+1}\cap
\Delta_1))\big)\geq \mu\big(\Delta_{-k+1}\cap 
\Delta_1\big)=\mu(\Delta_{-k+1})
$$
where we used $\p$-invariance of  $\mu$, the inclusion $\p^{-1}(\p(\Delta_{-k+1}\cap
\Delta_1))\supseteq \Delta_{-k+1}\cap 
\Delta_1$ and the above shown fact that $\mu(\Delta_1)=1$. Thus the assumption that $\mu(\Delta_{-k+1})=1$ implies  $\mu(\Delta_{-k})=1$. Hence by induction  we get  $\mu(\Delta_{-n})=1$  for all $n \geq 0$.
\end{proof}
\begin{rem} Recall that a point $x\in X$ is  \emph{non-wandering points} 
for a full map
$\varphi:X\to X$ if for every open  neighbourhood  $U$ of $x$
there is $n>0$  such that $\varphi^{-n}(U)\cap U\neq \emptyset$.  
Denoting the set of non-wandering points by $\Omega(\varphi)$
we  have $\varphi(\Omega(\varphi))\subseteq \Omega(\varphi)$ 
and ${\rm Inv}(X,\p)={\rm Inv}(\Omega(\varphi),\p)$.
For a partial map $\varphi:\Delta \to X$ we define
 $\Omega(\varphi):=\Omega(\varphi|_{\Delta_{\infty}})$ to be the 
set of non-wandering points for the full map $\varphi:\Delta_\infty\to \Delta_\infty$.
Then $\varphi:\Omega(\varphi)\to  \Omega(\varphi)$ is a full map and
$$
 {\rm Inv}(X,\p)={\rm Inv}(\Omega(\varphi),\p), \qquad  {\rm Erg}(X,\p)={\rm Erg}(\Omega(\varphi),\p).
$$
\end{rem}

  The next variational principle  (in the full map case) is implicit in a number of works, cf. \cite{Lebedev79}, \cite{Kitover}, \cite[4]{Anton_Lebed}, \cite[5]{Anton}. 
It implies that when considering \emph{empirical averages}, i.e.  the sums of the form \eqref{eq:ergodic_sum} below,
the operations  $\lim$ and $\sup$ commute  (see Corollary~\ref{cor:ergodic_sums}). 
\begin{thm}[Variational principle for $\lim\sup$ of empirical averages]\label{Var_princ_for_ergodic_sums}
Let $(X,\p)$ be a partial dynamical system and let $f:\Delta\to \R$ be a continuous and bounded from above function where 
 $\Delta $ is  the domain of $\varphi$ (an open subset of $X$). We define the corresponding empirical averages  to be  functions 
$S_n(f):\Delta_{n} \to \R$, $n>0$, given  by 
\begin{equation}\label{eq:ergodic_sum}
S_n(f)(x)= \frac{1}{n}(f(x) +f(\varphi(x)) +... + f(\varphi^{n-1}(x)), \qquad x \in \Delta_n.
\end{equation}
 Then 
$$
\lim_{n\to \infty} \sup_{x\in \Delta_{n}} S_n(f)(x)=\max_{\mu \in {\rm Inv}\left(\Delta_\infty,\varphi\right)} \int_{\Delta_\infty} 
f\, d\mu 
=  \max_{\mu \in {\rm Erg}\left(\Delta_\infty,\varphi\right)} \int_{\Delta_\infty} 
f \, d\mu,
$$
if ${\rm Erg}\left(\Delta_\infty,\varphi\right)\neq \emptyset$ and $
\lim_{n\to \infty} \sup_{x\in \Delta_n} S_n(f)(x)=-\infty$ otherwise.
\end{thm}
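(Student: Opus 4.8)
The plan is to pin down the limit by squeezing $\liminf$ and $\limsup$ of $\sup_{x\in\Delta_n}S_n(f)(x)$ between $\sup\{\int_{\Delta_\infty} f\,d\mu : \mu\in{\rm Inv}(\Delta_\infty,\p)\}$ and itself, which produces existence of the limit and its value at once. Throughout I use that, as recorded at the start of this section, the domains $\Delta_n$ ($n>0$) are clopen, hence compact. The easy inequality is a direct consequence of invariance: given $\mu\in{\rm Inv}(\Delta_\infty,\p)$, Lemma~\ref{lem1.0} gives $\supp\mu\subseteq\Delta_\infty\subseteq\Delta_n$ and $\int f\circ\p^k\,d\mu=\int f\,d\mu$ for all $k$, so $\int f\,d\mu=\int S_n(f)\,d\mu\le\sup_{x\in\Delta_n}S_n(f)(x)$ for every $n$; letting $n\to\infty$ yields $\int f\,d\mu\le\liminf_n\sup_{x\in\Delta_n}S_n(f)(x)$. (Subadditivity of $n\mapsto n\sup_{\Delta_n}S_n(f)$ together with Fekete's lemma also gives existence of the limit in $[-\infty,\infty)$ independently.)

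For the reverse inequality I would realise the $\limsup$ by an invariant measure. Along a subsequence on which $\sup_{x\in\Delta_n}S_n(f)(x)$ tends to $L:=\limsup_n\sup_{x\in\Delta_n}S_n(f)(x)$, choose maximisers $x_n\in\Delta_n$ (the continuous $S_n(f)$ attains its supremum on the compact set $\Delta_n$) and form the empirical measures $\mu_n:=\frac1n\sum_{k=0}^{n-1}\delta_{\p^k(x_n)}$; these are probability measures on $X$ since $\p^k(x_n)\in\Delta_{n-k}\subseteq\Delta$ for $0\le k\le n-1$. Passing to a weak-$*$ convergent subsequence $\mu_n\to\mu$, the decisive step is to show $\mu$ is $\p$-invariant, and this is where the partial, potentially discontinuous nature of the data bites. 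For $h\in C(X)$ the function $h\circ\p$ is defined only on $\Delta$, but clopenness lets me extend it by $0$ to a function $g\in C(X)$ with $\int g\,d\mu_n=\int_\Delta h\circ\p\,d\mu_n$; a telescoping computation gives $\int g\,d\mu_n-\int h\,d\mu_n=\frac1n\big(h(\p^n(x_n))-h(x_n)\big)\to0$. In the limit $\int_\Delta h\circ\p\,d\mu=\int h\,d\mu$ for all $h\in C(X)$; taking $h\equiv1$ forces $\mu(\Delta)=1$, and the identity then reads $\p_*\mu=\mu$, i.e. $\mu\in{\rm Inv}(X,\p)={\rm Inv}(\Delta_\infty,\p)$ by Lemma~\ref{lem1.0}.

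To evaluate $\int f\,d\mu$ I must overcome that $f$ is only bounded above and defined on $\Delta$. I would extend $f$ to $\bar f:X\to[-\infty,\infty)$ equal to $f$ on the clopen set $\Delta$ and to $-\infty$ elsewhere; clopenness makes each superlevel set $\{\bar f\ge c\}$ closed, so $\bar f$ is upper semicontinuous and bounded above. Then $\int\bar f\,d\mu_n=S_n(f)(x_n)\to L$, and upper semicontinuity of $\nu\mapsto\int\bar f\,d\nu$ under weak-$*$ convergence gives $L\le\int\bar f\,d\mu=\int f\,d\mu$, the last equality because $\mu$ lives on $\Delta_\infty\subseteq\Delta$ where $\bar f=f$. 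Combining with the easy inequality squeezes everything: $L\le\int f\,d\mu\le\sup_{{\rm Inv}}\int f\,d\mu'\le\liminf_n\sup_{\Delta_n}S_n(f)\le L$, so the limit exists and equals $\max_{{\rm Inv}}\int f\,d\mu$, attained at $\mu$. Finally, since $f$ restricted to the compact $\Delta_\infty$ is continuous, $\nu\mapsto\int f\,d\nu$ is a weak-$*$ continuous affine functional on the compact convex set ${\rm Inv}(\Delta_\infty,\p)$; by Bauer's maximum principle it attains its maximum at an extreme point, and the extreme points are precisely the ergodic measures, giving $\max_{{\rm Inv}}=\max_{{\rm Erg}}$.

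It remains to handle the case ${\rm Erg}(\Delta_\infty,\p)=\emptyset$, equivalently $\Delta_\infty=\emptyset$. By compactness this can occur only if some $\Delta_N$ is empty: otherwise the nested nonempty compacta $\{\Delta_n\}_{n\ge0}$ have nonempty intersection on which $\p$ acts, its eventual range is a nonempty compact subset of $\Delta_\infty$, and Krylov--Bogolyubov would then produce an invariant, hence (by the extreme-point argument) an ergodic, measure. Once $\Delta_N=\emptyset$ we have $\Delta_n=\emptyset$ and $\sup_{\emptyset}S_n(f)=-\infty$ for all $n\ge N$, so the limit is $-\infty$. The \emph{main obstacle} is the second paragraph: guaranteeing that the weak-$*$ limit of empirical measures is genuinely invariant, with no mass escaping to $X\setminus\Delta$, and that the merely-bounded-above potential $f$ contributes its correct value; both are resolved by the clopenness of $\Delta$, which keeps the relevant extensions continuous (for the test functions) and upper semicontinuous (for $f$).
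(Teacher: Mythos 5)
Your proof is correct and follows essentially the same route as the paper's: the easy inequality from invariance, empirical measures along (near-)maximisers with a weak-$*$ cluster point and the telescoping argument for invariance, and a device to handle the fact that $f$ is only bounded above. The only deviations are minor: you package the paper's truncation of $f$ from below (by $-n$, followed by monotone convergence) as upper semicontinuity of $\nu\mapsto\int \bar f\,d\nu$ on the clopen set $\Delta$, you take exact rather than $1/n$-approximate maximisers, and you obtain the ergodic maximiser via Bauer's maximum principle where the paper invokes the Choquet--Bishop--de Leeuw decomposition; all of these are equivalent standard substitutes, and your explicit treatment of the degenerate case $\Delta_\infty=\emptyset$ is if anything more complete than the paper's.
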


\begin{proof}
Clearly, the sequence  $a_n:=\sup_{x\in \Delta_n} (f(x) +f(\varphi(x)) +... + f(\varphi^{n-1}(x))$ is sub-additive, and 
$\sup_{x\in \Delta_n} S_n(f)(x)=\frac{a_n}{n}$.
 Hence the limit 
$\lim_{n\to \infty} \sup_{x\in \Delta_n} S_n(f)(x)$ exists  and is equal to $\inf_{n\in \N}\sup_{x\in \Delta_n} S_n(f)(x)$
(it  may be $-\infty$). For any $\mu \in {\rm Inv}\left(X,\varphi\right)$  we have 
$
\int_{\Delta_\infty} f \, d\mu = 
\int_{\Delta_\infty} f\circ \varphi \, d\mu
$ and therefore  
$$
\int_{\Delta_\infty} f \, d\mu =\lim_{n\to \infty}  \int_{\Delta_\infty} S_n(f) \, d\mu \leq \lim_{n\to \infty}  \sup_{x\in \Delta_n} S_n(f)(x) .
$$ 
To construct a measure for which the converse inequality holds, we may assume that 
$\lim_{n\to \infty}  \sup_{x\in \Delta_n} S_n(f)(x)>-\infty$. 
Then there are points $x_n\in \Delta_n$  such that 
$S_n(f)(x_n)\geq \sup_{x\in \Delta_n} S_n(f)(x)-1/n$, so that
 $\lim_{n\to \infty} \sup_{x\in \Delta_n} S_n(f)(x)=\lim_{n\to \infty}S_n(f)(x_n)$. Put
$$
\nu_n= \frac{1}{n}\sum_{i=0}^{n-1} \delta_{\varphi^i(x_n)}, \qquad n\in \N,
$$
where $\delta_x$ is the unit measure accumlated in point $x\in X$. By  Banach--Alaoglu theorem there is a subsequence  $\nu_{n_k}$ convergent in the *-weak topology to a probability measure $\nu$. In other words, 
$$
\nu(h)=\lim_{k\to \infty} \nu_{n_k}(h), \qquad h\in C(X).
$$
To prove  that $\nu$ is $\varphi$-invariant it 
 suffices to show that
$$
\nu (h \circ \p)=\nu (h), \qquad \text{for all }h\in C(X),
$$
where by $(h\circ\p)(x)$ we mean  $h(\p(x))$, when $x\in\Delta$, and  $0$ otherwise. 
However, using the definition of $\nu$ (and $\nu_{n_k}$) and  boundedness of $h\in C(X)$ we get
$$
\nu(h\circ \varphi)- \nu(h)=\lim_{k\to\infty} [\nu_{n_k}(h\circ \varphi)-\nu_{n_k}(h)]
=\lim_{k\to\infty} \frac{1}{n_k}[h(\varphi^{n_k}(x_{n_k})) -h(x_{n_k})]=0.
$$
Thus  $\nu$ is $\varphi$-invariant and in particular it supported on $\Delta_\infty$,
by Lemma \ref{lem1.0}. 

To prove the inequality $  \lim_{n\to \infty}  \sup_{x\in \Delta_n} S_n(f)(x) \leq \int_{\Delta_\infty} f \, d\mu
$ note that   in view of the choice of points $x_n$ and definition of measures $\nu_n$ we have
$$
\lim_{n\to \infty} \sup_{x\in \Delta_n} S_n(f)(x)= \lim_{k\to \infty}  S_{n_k}(f)(x_{n_k})=\lim_{k\to \infty} \nu_{n_k}(f).
$$
Even though $f$ is continuous on $\Delta$, we can not directly conclude that $\lim_{k\to \infty} \nu_{n_k}(f)=\nu(f)$, 
as $f$ may not be bounded from below. Nevertheless, putting  for    $n\in \N$ 
$$
f_n(x):=\begin{cases}
f(x), &  f(x)\geq -n \\
-n ,  &  f(x)<-n
\end{cases}
$$
we have $f
\leq f_n$ and $f_n\in C(X)$.  
Hence for each $n\in \N$
$$
\lim_{n\to \infty} \sup_{x\in \Delta_n} S_n(f)(x)=\lim_{k\to \infty} \nu_{n_k}(f)
\leq \lim_{k\to \infty} \nu_{n_k}(f_n)=\nu(f_n).
$$
Moreover, the functions  $f_n$ form a decreasing sequence that converges 
pointwise to $f$ on $\Delta$,  which is  $\nu$-almost everywhere. Thus $
\nu(f)=\lim_{n\to \infty} \nu (f_n)\geq \lim_{n\to \infty} \sup_{x\in \Delta_n} S_n(f)(x)
$.  
This concludes the proof of the equality 
$$\lim_{n\to \infty} \sup_{x\in \Delta_n} S_n(f)(x)=\max_{\mu \in {\rm Inv}\left(\Delta_\infty,\varphi\right)} \int_{\Delta_\infty}
f\, d\mu.
$$
To finish  the proof we need to show that the maximum  above is attained at an ergodic measure.
To this end take $\nu \in {\rm Inv}\left(\Delta_\infty,\varphi\right)$  such that 
 $ \int_{\Delta_\infty} f \, d\nu=\max_{\mu \in {\rm Inv}\left(\Delta_\infty,\varphi\right)} \int_{\Delta_\infty}
f\, d\mu$.
Then for every $\mu \in {\rm Erg}\left(\Delta_\infty,\varphi\right)$ we have   
$$
 \int_{\Delta_\infty} f\, d\nu \ge  \int_{\Delta_\infty} f\, d\mu. 
$$
By  Choquet-Bishop-de Leeuw Theorem (see, for instance, \cite[page 22]{Phelps}), there exists  a probability measure
$m$ on ${\rm Erg}\left(\Delta_\infty,\varphi\right)$  such that
$$
 \int_{\Delta_\infty} f\, d\nu  =  \int_{{\rm Erg}\left(\Delta_\infty,\varphi\right)} \left(\int_{\Delta_\infty} f\, d\mu\right)\,\, dm (\mu).
$$
The above equality and the earlier inequality  imply existence of  $\mu \in{\rm Erg}\left(\Delta_\infty,\varphi\right)$ with 
$
 \int_{\Delta_\infty} f\, d\nu=  \int_{\Delta_\infty} f\, d\mu. 
$
\end{proof}
\begin{cor}\label{cor:ergodic_sums}
Retain the notation and assumptions of Theorem \ref{Var_princ_for_ergodic_sums}. There is $\mu \in {\rm Erg}\left(\Delta_\infty,\p\right)$
and a subset $Y\subseteq \Delta_\infty$, $\mu (Y)=1$, such that 
$$
\lim_{n\to \infty} S_n(f)(y)=\lim_{n\to \infty} \sup_{x\in \Delta_n} S_n(f)(x) \qquad \text{ for every }y\in Y.
$$
In particular, $\lim_{n\to \infty} \sup_{x\in \Delta_n} S_n(f)(x)=\max_{x\in \Delta_\infty} \lim_{n\to \infty} S_n(f)(x)$.
\end{cor}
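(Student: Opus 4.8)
The plan is to read off the maximizing ergodic measure from Theorem~\ref{Var_princ_for_ergodic_sums} and then transfer the equality from a supremum of empirical averages to an honest orbit average by means of Birkhoff's pointwise ergodic theorem. Write $L:=\lim_{n\to\infty}\sup_{x\in\Delta_n}S_n(f)(x)$. First I would apply Theorem~\ref{Var_princ_for_ergodic_sums} to obtain $\mu\in{\rm Erg}(\Delta_\infty,\p)$ with $\int_{\Delta_\infty}f\,d\mu=L$. Here it is crucial that $\Delta_\infty$ is compact (a closed subset of $X$) and that, as noted after the definition of the essential domain, $\p:\Delta_\infty\to\Delta_\infty$ is everywhere defined and surjective; thus $(\Delta_\infty,\p,\mu)$ is a genuine measure-preserving dynamical system to which the ergodic theorem applies, and for $y\in\Delta_\infty$ the forward orbit stays in $\Delta_\infty\subseteq\Delta$, so that every $S_n(f)(y)$ is defined.

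Next I would invoke the pointwise ergodic theorem for $(\Delta_\infty,\p,\mu)$ and the function $f$. It produces a set $Y\subseteq\Delta_\infty$ with $\mu(Y)=1$ on which $S_n(f)(y)=\frac1n\sum_{i=0}^{n-1}f(\p^i(y))$ converges to the space average $\int_{\Delta_\infty}f\,d\mu=L$. Since $L=\lim_{n\to\infty}\sup_{x\in\Delta_n}S_n(f)(x)$ by the choice of $\mu$, this is precisely the asserted equality $\lim_{n\to\infty}S_n(f)(y)=\lim_{n\to\infty}\sup_{x\in\Delta_n}S_n(f)(x)$ for every $y\in Y$.

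The displayed ``in particular'' statement then follows by checking two inequalities. Because $\mu(Y)=1$ the set $Y$ is nonempty and the limit equals $L$ there, so $\max_{x\in\Delta_\infty}\lim_{n}S_n(f)(x)\ge L$. Conversely, for any $x\in\Delta_\infty$ at which the limit exists we have $S_n(f)(x)\le\sup_{x'\in\Delta_n}S_n(f)(x')$ for every $n$ (note $x\in\Delta_\infty\subseteq\Delta_n$), and letting $n\to\infty$ gives $\lim_{n}S_n(f)(x)\le L$; taking the supremum over such $x$ yields the reverse inequality.

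The only genuinely delicate point, and the step I expect to be the main obstacle, is integrability: $f$ is assumed bounded from above but not from below, so a priori $f\notin L^1(\mu)$. When $L>-\infty$ this causes no trouble, for then $\int_{\Delta_\infty}f\,d\mu=L$ is finite and, combined with $\int f^+\,d\mu<\infty$ (boundedness above), forces $\int f^-\,d\mu<\infty$, whence $f\in L^1(\mu)$ and the classical Birkhoff theorem applies directly. When $L=-\infty$ I would instead use the standard extension of the pointwise ergodic theorem to functions whose integral is defined in $[-\infty,+\infty)$: for $f$ bounded above with $\int_{\Delta_\infty}f\,d\mu=-\infty$ the averages still converge $\mu$-almost everywhere, now to $-\infty=L$. (This degenerate case presupposes $\Delta_\infty\neq\emptyset$, equivalently ${\rm Erg}(\Delta_\infty,\p)\neq\emptyset$; otherwise there is no ergodic measure and the statement is vacuous.)
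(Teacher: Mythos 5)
Your proposal is correct and follows essentially the same route as the paper: extract the maximizing ergodic measure from Theorem~\ref{Var_princ_for_ergodic_sums} and apply the Birkhoff--Khinchin ergodic theorem to produce the full-measure set $Y$, then deduce the ``in particular'' statement from the two obvious inequalities. Your explicit treatment of the integrability of $f$ (which is only bounded from above) is a point the paper glosses over, and is handled correctly.
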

\begin{proof}
By Theorem \ref{Var_princ_for_ergodic_sums} there is $\mu \in {\rm Erg}\left(\Delta_\infty,\p\right)$ such that 
 $\lim_{n\to \infty} \sup_{x\in \Delta_n} S_n(f)(x)=\int_{\Delta_\infty} f \, d\mu
  $.   By the Birkhoff-Khinchin ergodic theorem there exists a subset  $Y\subseteq \Delta_\infty$,  $\mu (Y)=1$ such that for every $y \in Y$  the 
sum $
S_n(f)(y)$ converges to  $\int_{\Delta_\infty} 
f \, d\mu$. This gives the first part of assertion. 

For the second part note  that for every $x\in \Delta_\infty$,   the sequence  $a_n:=(f(x) +f(\varphi(x)) +... + f(\varphi^{n-1}(x))$ is (sub-)additive, and 
therefore the limit of $ S_n(f)(x)=\frac{a_n}{n}$ exists. Moreover, we clearly have   
$\sup_{x\in \Delta_\infty} \lim_{n\to \infty} S_n(f)(x) \leq \lim_{n\to \infty} \sup_{x\in \Delta_n} S_n(f)(x)$. This together with the first part of the assertion gives the 
desired equality.
\end{proof}
\section{Variational principles  for cocycles and Lyapunov exponents}
\label{varp-gen-delta}

Here we introduce the spectral exponent of an operator-valued function $a:\Delta \to \B(F)$  and prove  variational principles that express this exponent either in terms of a linear extension or  in terms of measure Lyapunov exponents. These results will serve  as fundamental  instruments in the proofs of all  the variational principles for  spectral radius of weighted endomorphisms discussed further in Section~\ref{sec:spectral_radius}.

\subsection{Cocycles, Lyapunov exponents and linear extensions}
\label{endom-initial}

Let us fix a partial dynamical system $(X,\p)$  and a Banach space $F$.   Let us also fix an operator valued function $\Delta\ni x \to a(x) \in \B(F)$ which is bounded in the sense that $\sup_{x\in \Delta}\|a(x)\|<\infty$. 
\begin{defn} We associate to the triple $(X,\p,a)$ two functions 
$C_{a,\p}^f, C_{a,\p}^b:\{(x,n)\in X\times \N: x\in \Delta_n\} \to \B(F)$ given  by the formulae
\begin{align*}
C_{a,\p}^f(x,n)&:=a(x)\cdot a(\p(x))\cdot{\dots} \cdot  
a(\p^{n-1}(x)),
\\
C_{a,\p}^b(x,n)&:=a(\p^{n-1}(x))\cdot{\dots} \cdot a(\p(x))\cdot a(x),  
\end{align*}
where  $x\in \Delta_{n}$,  $n\in \N$. We call $C_{a,\p}^f$ and $C_{a,\p}^b$ \emph{cocycles of $a$ with respect to $\p$}. We refer to $C_{a,\p}^f$ as a \emph{forward cocycle}  and  to $C_{a,\p}^b$ as a \emph{backward cocycle}. 
\end{defn}
We are interested in ergodic properties of these cocycles and the arising variational principles for them. If $\p:X\to X$ is a homeomorphism we have
$$
C_{a,\p}^f(x,n)=  C_{a,\p^{-1}}^b(\p^{n-1}(x),n),\qquad C_{a,\p}^b(x,n)=  C_{a,\p^{-1}}^f(\p^{n-1}(x),n),
$$ 
so  we can study properties of the forward cocycle by looking at the backward cocycle, and vice versa.
In general, in the  irreversible case,  we can relate the forward and backward cocycles by passing to adjoints.
Namely, let $F^*$ be the space dual to $F$ and  define the adjoints  $C_{a,\p}^{*f}, C_{a,\p}^{*b}:\{(x,n)\in X\times \N: x\in \Delta_n\} \to \B(F^*)$
of $C_{a,\p}^f$, $C_{a,\p}^b$ by taking pointwise adjoints: $C^*_{a,\alpha}(x,n):=C_{a,\alpha}(x,n)^*$ and  $C^*_{aT, \varphi}(x,n):=C_{aT, \varphi}(x,n)^*$. Similarly, define $\Delta\ni x \mapsto a^*(x):=a(x)^*\in \B(F^*)$. Then we have 
\begin{equation}\label{eq:cocycles_vs_adjoints}
C_{a,\p}^{f*}=C_{a^{*},\p}^b \quad\textrm{ and }\quad C_{a,\p}^{b*}=C_{a^{*},\p}^f.
\end{equation}
Both $C_{a,\p}^b$ and $C_{a,\p}^{f}$ have their advantages. Ergodic properties of $C_{a,\p}^b$ seem easier to calculate, while $C_{a,\p}^{f}$ is more relevant for calculation of spectral radius:
\begin{defn}\label{def:spectral_exponent}
Since the sequence $a_n:=\sup_{x\in \Delta_n} \ln \|C_{a,\p}^f(x,n)\|$ is subadditive (i.e. $a_{m+n} \le a_m + a_n$), we have the  following equality
$$
\lambda(a,\p):=\lim_{n\to \infty}\sup_{x\in \Delta_n} \frac{1}{n} \ln \|C_{a,\p}^f(x,n)\|=\inf_{n\in \N}\sup_{x\in \Delta_n} \frac{1}{n} \ln \|C_{a,\p}^f(x,n)\|.
$$
We call its common value $\lambda(a,\p)\in [-\infty,\infty)$ the  \emph{spectral exponent} of $a:\Delta \to \B(F)$ with respect to $(X,\p)$. 
\end{defn}
\begin{rem}\label{rem:spectral_exponents_and_radii} Let $D\subseteq \B(F)$ be  Banach algebra, and let \(\alpha:C(X,D)\to C(X,D)\) be a contractive endomorphism that generates a partial dynamical system $(X,\p)$.  For every $a\in D\subseteq \B(F)$ we define the function
$C_{a,\alpha}\{(x,n)\in X\times \N: x\in \Delta_n\} \to \B(F)$ by the formula
$$
C_{a,\alpha}(x,n):=a\cdot\alpha(a)\cdot {\dots} \cdot \alpha^{n-1} (a) (x)\qquad  \text{for   $x\in \Delta_{n}$}.
$$
It follows from Corollary \ref{cor:spectral_radius_contractive_endomor} that
$$
r(a\alpha)=
\lim_{n\to \infty}\max_{x\in \Delta_n} \|C_{a,\alpha}(x,n)\|^{\frac{1}{n}}.
$$
In particular, if the field of endomorphisms $\{\alpha_x\}_{x\in \Delta}$ generated by $\alpha$ is trivial, i.e $\alpha_x\equiv id_{D}$, $x\in \Delta$, then $C_{a,\alpha}=C_{a,\varphi}^f$ and therefore
$$
\ln r(a\alpha) = \lambda(a,\varphi).
$$
This motivates Definition \ref{def:spectral_exponent}.  In addition it  also indicates why in the case when the field  $\{\alpha_x\}_{x\in \Delta}$ is non-trivial, deriving formulas for  $r(a\alpha)$ is much harder and requires extra work and this will be our aim.
\end{rem}

Let $B$ be the unit ball in the dual space $F^*$ equipped with $^*$-weak topology. In order to deal with the potential discontinuity of  $a:\Delta \to \B(F)$, 
cf. Example \ref{ex-unmeas-1} and Remark \ref{rem:cohomological remark}, we will consider a quotient of $B$ by the following equivalence relation:
$$
v\sim w\,\,\Longleftrightarrow\,\, v=\lambda w \text{ for some }\lambda\in \T.
$$
Note that $v\sim w$ if and only if $|v|=|w|$  as functions. 
We will write $[v]$ for the equivalence class of $v\in B$. Thus $[v]=\{w\in B: |v|=|w|\}$. In what follows  we denote by $[B]$ the factor space $B/\sim$. 
\begin{lem}\label{lem:projective_ball}
The space $[B]$ is compact and Hausdorff.
\end{lem}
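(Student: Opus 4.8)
The plan is to treat $[B]$ as the orbit space of a compact-group action and to separate compactness (which is essentially free) from the Hausdorff property (which is the real content). First I would record that $B$, the closed unit ball of $F^*$, is compact in the weak-$*$ topology by the Banach--Alaoglu theorem, and Hausdorff because the weak-$*$ topology separates functionals. Since the quotient map $q\colon B\to [B]$ is continuous and surjective by definition of the quotient topology, compactness of $[B]=q(B)$ is immediate. So the whole issue is to prove that $[B]$ is Hausdorff, and since quotients of Hausdorff spaces need not be Hausdorff, this step must use the specific structure of $\sim$.

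The key observation is that $\sim$ is exactly the orbit equivalence of the action of the circle group $\T$ on $B$ by scalar multiplication, $(\lambda,v)\mapsto \lambda v$; thus $[B]=B/\T$. I would first check that this action is jointly continuous for the weak-$*$ topology: for each $x\in F$ the evaluation $(\lambda,v)\mapsto (\lambda v)(x)=\lambda\, v(x)$ is continuous, because $v\mapsto v(x)$ is weak-$*$ continuous, $\lambda\mapsto \lambda$ is continuous, and $|v(x)|\le \|x\|$ is bounded on $B$. From continuity of a group action one gets for free that the quotient map $q$ is open: for open $U\subseteq B$ one has $q^{-1}(q(U))=\bigcup_{\lambda\in\T}\lambda U$, a union of open sets (each $v\mapsto\lambda v$ being a homeomorphism of $B$), hence open, so $q(U)$ is open.

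It then remains to show that the relation $R:=\{(v,w)\in B\times B: v\sim w\}$ is closed, after which one invokes the standard criterion that an open continuous surjection whose graph relation is closed has Hausdorff image. For closedness I would exhibit $R$ as the image of the continuous map $\T\times B\to B\times B$, $(\lambda,v)\mapsto (v,\lambda v)$; since $\T\times B$ is compact (product of the compact group and the weak-$*$ compact ball) and $B\times B$ is Hausdorff, this image is compact, hence closed. The Hausdorff conclusion is then routine: given distinct points of $[B]$, lift them to $v,w\in B$ with $(v,w)\notin R$, use closedness of $R$ to find basic open $U\times V\ni(v,w)$ disjoint from $R$, and observe that $q(U),q(V)$ are disjoint open neighbourhoods separating the two points (disjointness is precisely $(U\times V)\cap R=\emptyset$). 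The only step requiring genuine care—and the main obstacle—is this Hausdorff property; everything reduces to verifying joint weak-$*$ continuity of the scalar action and the compactness-implies-closedness argument for $R$. As an alternative to the group-action route, one could separate points of $[B]$ directly using the remark that $[v]=\{w: |v|=|w|\}$, but the orbit-space argument is cleaner and avoids case analysis.
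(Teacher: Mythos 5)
Your proof is correct, but it takes a genuinely different route from the paper's. The paper also gets compactness for free as a continuous image of the weak-$*$ compact ball, but it proves the Hausdorff property by a direct separation argument: given $[v]\neq[w]$ it picks $h\in F$ with $|v(h)|\neq|w(h)|$, builds explicit basic neighbourhoods of the form $\{[u]:|\,|v(h)|-|u(h)|\,|<\varepsilon\}$, and checks disjointness with a triangle inequality. You instead recognize $[B]$ as the orbit space $B/\T$ of the jointly weak-$*$ continuous circle action, observe that the quotient map is open, show the orbit relation $R$ is closed as the compact image of $\T\times B\to B\times B$, and invoke the standard criterion that an open quotient with closed relation is Hausdorff. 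Your argument is more conceptual and generalizes immediately to any continuous action of a compact group on a compact Hausdorff space, with all steps (joint continuity, openness of $q$, closedness of $R$) correctly verified. What the paper's hands-on approach buys is an explicit description of a basis for the topology of $[B]$ by sets of the type $U_{w,h,\varepsilon}$, which is reused later (in the proof of Lemma \ref{lem:continuous_map_into_projective_ball}) to verify continuity of maps into $[B]$; if one adopted your route, that explicit basis would still have to be identified separately at that point.
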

\begin{proof} $[B]$ is compact as a continuous image of the compact space $B$. 
Now,  take any $[v]$, $[w]\in [B]$ with $[v]\neq [w]$. Then there is $h\in F$ such that $|v(h)|\neq|w(h)|$. We may
assume that $|v(h)|<|w(h)|$. Take $\varepsilon=\frac{|w(h)|-|v(h)|}{2}$ and put
$$
V:=\{[u]\in [B]: |\,|v(h)|-|u(h)|\,|<\varepsilon\},\qquad W:=\{[u]\in [B]: |\,|w(h)|-|u(h)|\,|<\varepsilon\}. 
$$
Then $V$ and $W$ are open neighbourhoods of $[v]$ and $[w]$ in $[B]$.
Moreover, if we assume that $[u]\in V\cap W$, then there are $\lambda_v$, $\lambda_w\in \T$ such
that $|\lambda_v v(h)-u(h)|, |\lambda_w w (h)-u(h)| <\varepsilon$. This implies 
\begin{align*}
 |w(h)|- |v(h)|&=|\lambda_w w(h)|- |\lambda_v v(h)|\leq |\lambda_w w(h)-\lambda_v v(h)|
\\
&\leq |\lambda_v v(h)-u(h)|+ |\lambda_w w (h)-u(h)|
< 2\varepsilon=|w(h)|- |v(h)|,
\end{align*}
a contradiction. Hence $V$ and $W$ are disjoint, and $[B]$ is Hausdorff.
\end{proof}
By Lemma \ref{lem:projective_ball} the space $\widetilde{X}:= X\times {[B]}$ equipped with the product topology is a compact Hausdorff space.
Construction described in the next definition plays the principal role in variational principles under study.
\begin{defn}\label{defn:linear_extension}
We say that a triple $(X,\p,a)$  \emph{admits a continuous linear extension} $(\X,\tilde{\varphi}, \tilde{a})$ if the set
$$
\TDelta:= \{(x,[v])\in \widetilde{X}: \, x\in \Delta,\,\,  a^{*}(x)v  \neq 0\},
$$
is open in $\X$, and the maps $\tilde{\varphi}:\TDelta \to \widetilde{X}$  and $\tilde{a}: \TDelta\to [0,+\infty)$ given by
\begin{equation}
\label{e-lin-ext1}
\tilde{\p} (x,[v]):= \left(\varphi (x), \left[\frac{a^*(x)v}{\|a^*(x)v\|}\right]\right), \qquad \tilde{a}(x,[v]):= 
\| a^*(x)v\|
\end{equation} 
are  continuous. 
\end{defn} 
Clearly, $(X,\p,a)$ admits a continuous extension whenever $a$ is continuous, that is when $a\in C(\Delta,\B(F))$. However,   it may occur that   $(X,\p,a)$ admits a continuous extension even   when $a$ is  not  measurable: for instance, take $a=T$ in Example \ref{ex-unmeas-1}. 
\begin{thm}[Variational principle using linear extension]
\label{thm:varp-principle_for_cocycles} 
Let $(X,\p)$ be a partial dynamical system and  let $\Delta\ni x \to a(x) \in \B(F)$ a bounded function  such that  $(X,\p,a)$  admits the continuous linear extension $(\X,\tilde{\varphi},\tilde{a})$ (this holds, e.g., when $a\in C(\Delta,\B(F))$). 
Then
$$
\lambda(a,\p) =  \max_{\mu \in {\rm Inv}\left(\TDelta_\infty,\tilde{\varphi}\right)} \int_{\TDelta_\infty} 
\ln \tilde{a}(x, v) \,\, d\mu
= \ \max_{\mu \in {\rm Erg}\left(\TDelta_\infty,\tilde{\varphi}\right)}  \int_{\TDelta_\infty} 
\ln \tilde{a}(x, v) \,\, d\mu
$$
where $\TDelta_\infty$ is the essential domain of  $\tilde{\varphi}$. We assume here that $\ln (0) = -\infty$  and $\lambda(a,\p) =-\infty$ if  $ {\rm Erg}\left(\TDelta_\infty,\tilde{\varphi}\right) = \emptyset$  (this is the case when $\TDelta_\infty =\emptyset$ and all the more when $\Delta_\infty =\emptyset$).
\end{thm}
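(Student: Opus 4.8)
The plan is to reduce the statement to the variational principle for empirical averages, Theorem~\ref{Var_princ_for_ergodic_sums}, applied to the extended partial dynamical system $(\X,\tilde\p)$ together with the potential $f:=\ln\tilde a:\TDelta\to\R$. First I would check that the hypotheses of Theorem~\ref{Var_princ_for_ergodic_sums} are met: $\X=X\times[B]$ is compact Hausdorff by Lemma~\ref{lem:projective_ball}, the domain $\TDelta$ is open and $\tilde\p:\TDelta\to\X$ is continuous by the standing assumption that $(X,\p,a)$ admits the continuous linear extension, and $f=\ln\tilde a$ is continuous on $\TDelta$ (since $\tilde a$ is continuous and strictly positive there) and bounded above because $\tilde a(x,[v])=\|a^*(x)v\|\le\sup_{x\in\Delta}\|a(x)\|<\infty$. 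Thus Theorem~\ref{Var_princ_for_ergodic_sums} will identify $\max_{\mu\in{\rm Inv}(\TDelta_\infty,\tilde\p)}\int\ln\tilde a\,d\mu=\max_{\mu\in{\rm Erg}(\TDelta_\infty,\tilde\p)}\int\ln\tilde a\,d\mu$ with $\lim_{n\to\infty}\sup_{(x,[v])\in\TDelta_n}S_n(\ln\tilde a)(x,[v])$, so the whole proof reduces to showing that this last limit equals $\lambda(a,\p)$, and the asserted $-\infty$ alternative will follow from the corresponding alternative in Theorem~\ref{Var_princ_for_ergodic_sums}.

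The heart of the argument is a telescoping identity. I would first track the iterates: an induction on $i$ shows that the unit-norm representative of the second coordinate of $\tilde\p^{\,i}(x,[v])$ is $C_{a,\p}^f(x,i)^*v/\|C_{a,\p}^f(x,i)^*v\|$, where $C_{a,\p}^f(x,i)^*=a^*(\p^{i-1}(x))\cdots a^*(x)$ — the successive positive normalization constants cancel, and \eqref{eq:cocycles_vs_adjoints}, identifying the adjoint of the forward cocycle with the backward cocycle of $a^*$, is precisely what makes this adjoint product appear. Evaluating $\tilde a$ at this representative gives $\tilde a(\tilde\p^{\,i}(x,[v]))=\|C_{a,\p}^f(x,i+1)^*v\|/\|C_{a,\p}^f(x,i)^*v\|$ for $i\ge 1$ and $\tilde a(x,[v])=\|C_{a,\p}^f(x,1)^*v\|$ for $i=0$. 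The product over $i=0,\dots,n-1$ therefore telescopes to $\prod_{i=0}^{n-1}\tilde a(\tilde\p^{\,i}(x,[v]))=\|C_{a,\p}^f(x,n)^*v\|$, whence
$$
S_n(\ln\tilde a)(x,[v])=\tfrac1n\ln\big\|C_{a,\p}^f(x,n)^*v\big\|,\qquad (x,[v])\in\TDelta_n .
$$

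It then remains to take the supremum over $(x,[v])\in\TDelta_n$ and pass to the limit. For fixed $x\in\Delta_n$ the admissibility constraint $(x,[v])\in\TDelta_n$ amounts to $C_{a,\p}^f(x,n)^*v\neq 0$; since $[v]$ runs over all classes of vectors in the unit ball and $\|C_{a,\p}^f(x,n)^*v\|$ depends only on $[v]$, one has $\sup_{[v]}\|C_{a,\p}^f(x,n)^*v\|=\|C_{a,\p}^f(x,n)^*\|=\|C_{a,\p}^f(x,n)\|$, using $\|C^*\|=\|C\|$ for the adjoint on $F^*$. Hence $\sup_{(x,[v])\in\TDelta_n}S_n(\ln\tilde a)=\sup_{x\in\Delta_n}\tfrac1n\ln\|C_{a,\p}^f(x,n)\|$, and letting $n\to\infty$ recovers exactly $\lambda(a,\p)$ by Definition~\ref{def:spectral_exponent}. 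I expect the main delicate points to be, first, verifying the telescoping cleanly across the $i=0$ term and over the varying domains $\TDelta_n$, and second, interchanging the supremum over directions $[v]$ with the operator-norm identity while correctly handling the degenerate cases (an operator $C_{a,\p}^f(x,n)=0$, an empty $\TDelta_\infty$, and the $\ln 0=-\infty$ convention) so that the $-\infty$ branch of the statement is reproduced.
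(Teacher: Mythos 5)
Your proposal is correct and follows essentially the same route as the paper's proof: the paper likewise establishes the telescoping identity $\prod_{k=0}^{n-1}\tilde{a}(\tilde{\varphi}^k(x,[v]))=\|C_{a^*,\p}^{b}(x,n)v\|$ (its equation \eqref{eq:prod-coc}), identifies $C_{a^*,\p}^{b}(x,n)=C_{a,\p}^{f}(x,n)^*$ via \eqref{eq:cocycles_vs_adjoints}, takes the supremum over directions to recover the operator norm and hence $\lambda(a,\p)$, and then invokes Theorem~\ref{Var_princ_for_ergodic_sums} for the extended system. Your explicit attention to the $i=0$ term, the varying domains $\TDelta_n$, and the degenerate $-\infty$ cases only makes the argument more careful than the paper's.
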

\begin{proof} Let $\TDelta_n$ be the domain of $\tilde{\varphi}^n$.
Simple calculation gives that 
\begin{equation}\label{eq:prod-coc}
\|C_{a^*,\p}^{b}(x,n)v\|=\prod_{k=0}^{n-1} 
\tilde{a} (\tilde{\varphi}^k (x,[v]))
\end{equation}
 whenever  $(x,[v])\in \TDelta_n$
and $\|C_{a^*,\p}^{b}(x,n)v\|=0$ otherwise. Moreover, by \eqref{eq:cocycles_vs_adjoints} we have $C_{a^*,\p}^{b}(x,n)=C_{a,\p}^{f}(x,n)^*$.
Thus we get
\begin{align*}
\lambda(a,\p)&=\lim_{n\to \infty}\sup_{x\in \Delta_n} \frac{1}{n} \ln \|C_{a,\p}^{f}(x,n)^*\|
=\lim_{n\to \infty}\sup_{x\in  \Delta_n}\sup_{v \in B} \frac{1}{n} \ln \|C_{a,\p}^f(x,n)^*v\|
\\
&=\lim_{n\to \infty}\sup_{(x,[v])\in  \Delta_n\times [B]} \frac{1}{n} \ln \|C_{a^*,\p}^b(x,n)v\|=  
\lim_{n\to \infty}  \sup_{(x,v)\in \TDelta_n} \frac{1}{n} \ln \left(\prod_{k=0}^{n-1} 
\tilde{a} (\tilde{\varphi}^k (x,v))\right)
\\
&= \lim_{n\to \infty}  \sup_{(x,v)\in \TDelta_n} S_n(\ln\tilde{a})(x,v) 
\end{align*}
where $S_n(\ln\tilde{a})$ is the empirical average  corresponding to $\ln\tilde{a}$ and $\tilde{\varphi}$, see \eqref{eq:ergodic_sum}.
Thus the assertion follows from Theorem \ref{Var_princ_for_ergodic_sums}.

\end{proof}
\begin{rem}
\label{relation-erg-Terg}
Note that   $\TDelta_\infty \subseteq \Delta_\infty\times [S]$ where $S$ is the unit sphere in $F^{*}$, and thus $[S]$ is the projective space of $F^*$.  Lemma~\ref{lem1.0} implies that
$$
{\rm Inv}\left(\TDelta_\infty,\tilde{\varphi}\right) = {\rm Inv}\left(\Delta_\infty \times [S],\tilde{\varphi}\right)
\quad\text{ and }\quad {\rm Erg}\left(\TDelta_\infty,\tilde{\varphi}\right) = {\rm Erg}\left(\Delta_\infty \times [S],\tilde{\varphi}\right).
$$ 
In particular, in  Theorem \ref{thm:varp-principle_for_cocycles} one could  replace $\TDelta_\infty$ with $\Delta_\infty\times [S]$.
Moreover, 
the projection $p:\Delta_\infty\times [S]\to \Delta_\infty$ onto the first coordinate induces a natural projection $p^*$ of measures:  for each Borel measure $\tilde{\mu}$ on $\Delta_{\infty}\times [S]$,  $p^*(\tilde{\mu})$ is a Borel measure  on $\Delta_{\infty}$ where
$
p^*(\tilde{\mu}) (\omega):= \tilde{\mu} (p^{-1}(\omega))$, $\omega \subseteq \Delta_\infty.
$
By definitions of $p^*$ and $\tilde{\p}$
it follows that
$$
p^*\left({\rm Inv}\left(\TDelta_\infty,\tilde{\varphi}\right)\right)\subseteq 
{\rm Inv}\left(\Delta_\infty,\p\right)
 \quad
\text{and}
\quad 
p^*\left({\rm Erg}\left(\TDelta_\infty,\tilde{\varphi}\right)\right)\subseteq 
{\rm Erg}\left(\Delta_\infty,{\varphi}\right).
$$
Moreover, if the weight $a:\Delta\to  \B(F)$  has some zeros we may replace $\Delta$ with 
$$\Delta^a:=\{x\in \Delta: a(x)\neq 0\},$$
which is an  open subset of $X$. Denoting by $\Delta_{\infty}^a$  the essential domain of  $\varphi:\Delta^a\to X$, we get
$p^*({\rm Inv}(\TDelta_\infty,\tilde{\varphi}))\subseteq 
{\rm Inv}(\Delta_\infty^a,\p)$ and 
$ 
p^*({\rm Erg}(\TDelta_\infty,\tilde{\varphi}))\subseteq 
{\rm Erg}(\Delta_\infty^a,{\varphi}).
$
Sometimes the system $(\Delta_\infty^a,{\varphi})$
might be much smaller  than $(\Delta_\infty,{\varphi})$, cf. Proposition \ref{prop:compact_spectral_radius} below.

\end{rem}
\begin{cor}\label{cor:for_spectral_exponent}
Retain the notation and assumptions of Theorem \ref{thm:varp-principle_for_cocycles}. There is $\tilde{\mu} \in {\rm Erg}\left(\TDelta_\infty,\tilde\p\right)$
and a subset $Y\subseteq \TDelta_\infty$, $\tilde{\mu} (Y)=1$, such that 
$$
\lambda(a,\p) =\lim_{n\to \infty}  \frac{1}{n}\ln \|C_{a^*,\p}^{b}(x,n)v\|\quad \text{ for every }y\in Y.
$$
\end{cor}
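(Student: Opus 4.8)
The plan is to recognize this statement as the exact analogue of Corollary~\ref{cor:ergodic_sums}, transported from $(X,\p)$ to the extended system $(\X,\tilde{\p})$, and then to apply that corollary verbatim. Concretely, I would take the partial dynamical system to be $(\X,\tilde{\p})$, whose domain is $\TDelta$, and I would feed in the potential $f:=\ln\tilde{a}:\TDelta\to\R$.

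Before invoking Corollary~\ref{cor:ergodic_sums} I would check that $f$ meets its hypotheses, namely that it is continuous on $\TDelta$ and bounded from above. Continuity of $\tilde{a}$ is part of the definition of a continuous linear extension (Definition~\ref{defn:linear_extension}), and since $\tilde{a}(x,[v])=\|a^{*}(x)v\|>0$ precisely on $\TDelta$, the composition $\ln\tilde{a}$ is finite and continuous there. The upper bound is immediate from $\tilde{a}(x,[v])=\|a^{*}(x)v\|\le\|a^{*}(x)\|\le\sup_{x\in\Delta}\|a(x)\|<\infty$, using that $[v]$ is represented by a norm-one functional. Thus $f=\ln\tilde{a}$ is an admissible potential, and we may work in the nondegenerate case where ${\rm Erg}(\TDelta_\infty,\tilde{\p})\neq\emptyset$, as is implicit in the assertion.

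Applying Corollary~\ref{cor:ergodic_sums} to $(\X,\tilde{\p})$ and $f$ then yields an ergodic measure $\tilde{\mu}\in{\rm Erg}(\TDelta_\infty,\tilde{\p})$ and a set $Y\subseteq\TDelta_\infty$ with $\tilde{\mu}(Y)=1$ such that $\lim_{n\to\infty}S_n(\ln\tilde{a})(y)=\lim_{n\to\infty}\sup_{(x,v)\in\TDelta_n}S_n(\ln\tilde{a})(x,v)$ for every $y\in Y$. It remains only to identify both sides with the quantities in the statement. The right-hand side equals $\lambda(a,\p)$ by the chain of equalities established in the proof of Theorem~\ref{thm:varp-principle_for_cocycles}. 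For the left-hand side I would use the product formula \eqref{eq:prod-coc}: for $y=(x,[v])\in\TDelta_\infty\subseteq\TDelta_n$ one has $S_n(\ln\tilde{a})(x,[v])=\frac1n\sum_{k=0}^{n-1}\ln\tilde{a}(\tilde{\p}^k(x,[v]))=\frac1n\ln\|C_{a^{*},\p}^{b}(x,n)v\|$. Substituting this into the previous display gives the claimed equality.

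The argument is essentially bookkeeping, so I do not expect a serious obstacle. The only point requiring a little care is that the product formula \eqref{eq:prod-coc} must be applied for all $n$ simultaneously at the points $y\in Y$; this is legitimate because $Y\subseteq\TDelta_\infty=\bigcap_{n}\TDelta_n$, so every iterate $\tilde{\p}^k(x,[v])$ remains in $\TDelta$ and no factor $\tilde{a}(\tilde{\p}^k(x,[v]))$ vanishes, keeping the logarithms finite along the whole orbit and allowing the empirical averages to be read off as normalized logarithms of the backward cocycle.
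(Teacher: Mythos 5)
Your proof is correct and follows essentially the same route as the paper: the authors likewise obtain the corollary by replacing Theorem~\ref{Var_princ_for_ergodic_sums} with Corollary~\ref{cor:ergodic_sums} in the final step of the proof of Theorem~\ref{thm:varp-principle_for_cocycles} and then invoking the product formula \eqref{eq:prod-coc} to read the empirical averages as normalized logarithms of the backward cocycle. Your extra verification of the hypotheses on $\ln\tilde{a}$ is harmless bookkeeping already implicit in the theorem's proof.
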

\begin{proof} In the last step in the proof of 
Theorem \ref{thm:varp-principle_for_cocycles} instead of Theorem \ref{Var_princ_for_ergodic_sums} apply Corollary \ref{cor:ergodic_sums}.  and use that $\prod_{k=0}^{n-1} 
\tilde{a} (\tilde{\varphi}^k (x,[v]))=\|C_{a^*,\p}^{b}(x,n)v\|$, cf. \eqref{eq:prod-coc}.
\end{proof}

We will define Lyapunov exponents with respect to a partial dynamical system $(X,\p)$ as the corresponding objects with respect to
the (full) dynamical system $(\Delta_\infty,\p)$. 
\begin{defn} Let  $C:\Delta_\infty\times \N \to \B(F)$ be a function.  \emph{Lyapunov exponent of  $C$ at a point $x\in \Delta_\infty$ in direction $v\in F$} is given by the formula
$$
\label{e-Lyp-exp-def}
\lambda_x(C,v):=\liminf_{n\to\infty}\frac{1}{n}\ln \|C(x,n)v\|.
$$
  \emph{The maximal Lyapunov exponent of  $C$ at $x$} is
$$
\lambda_x(C):=\liminf_{n\to\infty}\frac{1}{n}\ln \|C(x,n)\|.
$$
We also put 
$
\lambda(C):=\liminf_{n\to\infty}\sup_{x\in \Delta_\infty} \frac{1}{n}\ln \|C(x,n)\|
$.
\end{defn}
For any function $C:\Delta_\infty\times \N \to \B(F)$, for which the corresponding Lyapunov exponents exist, we clearly have 
$$
\sup_{x\in \Delta_\infty}\sup_{v\in F, \|v\|=1} \lambda_x(C,v) \leq  \sup_{x\in \Delta_\infty} \lambda_x(C)\leq \lambda(C).
$$
Note that the left most expression involves the least number of conditions, and hence is the easiest to calculate. 
The chief importance   of backward cocycles associated with operator valued functions lies in that for such cocycles the three above expressions are equal. Moreover, exploiting the ergodic properties of the considered systems we may substantially decrease the domains of the suprema:
\begin{thm}[Variational principle using Lyapunov exponents I]
 \label{thm:variational_principle_Lyapunov} Let $(X,\p)$ be a partial dynamical system and  let $\Delta\ni x \to a(x) \in \B(F)$ a bounded function  such that  $(X,\p,a)$  admits the continuous linear extension  (this holds, e.g., when $a\in C(\Delta,\B(F))$).
Let $\Omega\subseteq \Delta_\infty$ be any set  such that $\mu(\Omega)>0$ for every $\mu \in {\rm Erg} (\Delta_\infty,\p)$. Then
\begin{equation}
\label{eq:Lyapunov_essential}
\lambda(a,\p) =\lambda(C_{a,\p}^f )= \max_{x \in \Omega} \lambda_x(C_{a,\p}^f)=\max_{(x,v) \in \Omega\times S}\lambda_x(C_{a^*,\p}^b,v) 
\end{equation}
where $S$ is the unit sphere in $F^*$. The set $\Delta_\infty$ above can be replaced by the essential domain
$\Delta^a_{\infty}$ 
of  $\varphi:\Delta^a=\{x\in \Delta: a(x)\neq 0\}\to X$.
\end{thm}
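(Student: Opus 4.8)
The plan is to prove that the four quantities
\[
\lambda(a,\p),\quad \lambda(C_{a,\p}^f),\quad \max_{x\in\Omega}\lambda_x(C_{a,\p}^f),\quad \max_{(x,v)\in\Omega\times S}\lambda_x(C_{a^*,\p}^b,v)
\]
coincide by closing a cyclic chain of inequalities, where every link except one is elementary and the remaining link is supplied by Corollary \ref{cor:for_spectral_exponent}. The elementary links run in the direction of decreasing generality of the suprema, and the single hard link runs the other way, forcing equality throughout and, as a byproduct, attainment of the maxima.

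First I would record the easy inequalities. Using $C_{a,\p}^f(x,n)^*=C_{a^*,\p}^b(x,n)$ from \eqref{eq:cocycles_vs_adjoints} we have $\|C_{a,\p}^f(x,n)\|=\|C_{a^*,\p}^b(x,n)\|=\sup_{v\in S}\|C_{a^*,\p}^b(x,n)v\|$. Hence for each fixed $v\in S$ and $x\in\Delta_\infty$, taking $\liminf$ gives $\lambda_x(C_{a^*,\p}^b,v)\le\lambda_x(C_{a,\p}^f)$, so the innermost max is dominated by $\max_{x\in\Omega}\lambda_x(C_{a,\p}^f)$. Next, since $\Omega\subseteq\Delta_\infty$ and $\sup_{x\in\Delta_\infty}\lambda_x(C)\le\lambda(C)$ (the chain of inequalities preceding the theorem), I bound $\max_{x\in\Omega}\lambda_x(C_{a,\p}^f)\le\lambda(C_{a,\p}^f)$. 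Finally, as $\Delta_\infty\subseteq\Delta_n$ for all $n$, a termwise comparison of suprema gives $\lambda(C_{a,\p}^f)=\liminf_n\sup_{\Delta_\infty}\tfrac1n\ln\|C_{a,\p}^f(\cdot,n)\|\le\lim_n\sup_{\Delta_n}\tfrac1n\ln\|C_{a,\p}^f(\cdot,n)\|=\lambda(a,\p)$. Altogether
\[
\max_{(x,v)\in\Omega\times S}\lambda_x(C_{a^*,\p}^b,v)\le\max_{x\in\Omega}\lambda_x(C_{a,\p}^f)\le\lambda(C_{a,\p}^f)\le\lambda(a,\p),
\]
with the maxima for the moment read as suprema.

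The crux is to close the loop with $\lambda(a,\p)\le\max_{(x,v)\in\Omega\times S}\lambda_x(C_{a^*,\p}^b,v)$. Here I would invoke Corollary \ref{cor:for_spectral_exponent} to produce $\tilde\mu\in{\rm Erg}(\TDelta_\infty,\tilde\p)$ and a set $Y\subseteq\TDelta_\infty$ with $\tilde\mu(Y)=1$ such that $\lambda(a,\p)=\lim_n\tfrac1n\ln\|C_{a^*,\p}^b(x,n)v\|=\lambda_x(C_{a^*,\p}^b,v)$ for every $y=(x,[v])\in Y$; since $\TDelta_\infty\subseteq\Delta_\infty\times[S]$, the representatives $v$ lie on $S$ and the expression depends only on $[v]$. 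By Remark \ref{relation-erg-Terg} the projection $\mu:=p^*(\tilde\mu)$ is ergodic for $(\Delta_\infty,\p)$, so the hypothesis on $\Omega$ yields $\mu(\Omega)>0$, i.e. $\tilde\mu(p^{-1}(\Omega))>0$. As $\tilde\mu(Y)=1$, the set $Y\cap p^{-1}(\Omega)$ has positive measure, hence is nonempty; picking $(x_0,[v_0])$ in it we obtain $x_0\in\Omega$, $v_0\in S$ and $\lambda(a,\p)=\lambda_{x_0}(C_{a^*,\p}^b,v_0)$, which is precisely the missing inequality. The main obstacle is exactly this descent step: transferring the ergodic, $\tilde\mu$-almost-everywhere realization of $\lambda(a,\p)$ on the extended system $\TDelta_\infty$ down to a concrete point of the prescribed base set $\Omega\subseteq\Delta_\infty$, and the positivity hypothesis on $\mu(\Omega)$ together with the projection $p^*$ is designed to make this possible.

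It remains to upgrade suprema to maxima and note the refinement. At the point $(x_0,v_0)$ the elementary estimate $\lambda_{x_0}(C_{a^*,\p}^b,v_0)\le\lambda_{x_0}(C_{a,\p}^f)\le\lambda(a,\p)$ becomes a squeeze, forcing $\lambda_{x_0}(C_{a,\p}^f)=\lambda(a,\p)$; thus both $\max_{x\in\Omega}\lambda_x(C_{a,\p}^f)$ and $\max_{(x,v)\in\Omega\times S}\lambda_x(C_{a^*,\p}^b,v)$ are attained (at $x_0$, resp. $(x_0,v_0)$) and equal $\lambda(a,\p)$, and the whole chain collapses to equalities. The final sentence, replacing $\Delta_\infty$ by $\Delta_\infty^a$, follows verbatim: by Remark \ref{relation-erg-Terg} the measure $p^*(\tilde\mu)$ is supported on $\Delta_\infty^a$ and is ergodic there, so taking the hypothesis on $\Omega$ relative to ${\rm Erg}(\Delta_\infty^a,\p)$ produces $x_0\in\Omega\cap\Delta_\infty^a$ by the same argument.
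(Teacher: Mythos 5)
Your proposal is correct and follows essentially the same route as the paper: the easy chain $\lambda_x(C_{a^*,\p}^b,v)\le\lambda_x(C_{a,\p}^f)\le\lambda(C_{a,\p}^f)\le\lambda(a,\p)$, followed by Corollary \ref{cor:for_spectral_exponent} and the projection $p^*$ of the ergodic measure $\tilde\mu$ to intersect the full-measure set $Y$ with $p^{-1}(\Omega)$ and so realize $\lambda(a,\p)$ as $\lambda_{x_0}(C_{a^*,\p}^b,v_0)$ with $x_0\in\Omega$. The only cosmetic difference is that you argue via $\tilde\mu(p^{-1}(\Omega))>0$ while the paper uses $\mu(p(Y))=1$; these are interchangeable.
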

\begin{proof} By \eqref{eq:cocycles_vs_adjoints}  and the  definition of $\lambda(a,\p)$ for any $(x,v) \in \Delta_{\infty}\times S$   we have 
$$
\lambda_x(C_{a^*,\p}^{b},v)\leq \lambda_x(C_{a^*,\p}^{b})=\lambda_x(C_{a,\p}^{f})\leq \lambda(C_{a,\p}^f ) \leq \lambda(a,\p).
$$
Thus it suffices to find $(x,v) \in \Omega\times S$ with $\lambda(a,\p)=\lambda_x(C_{a^*,\p}^{b},v)$. To this end, take $\tilde{\mu}\in {\rm Erg} (\TDelta_\infty,\tilde{\p})$ and $Y\subseteq  \TDelta_{\infty}\subseteq \Delta_{\infty}\times [S]$ as in  Corollary \ref{cor:for_spectral_exponent}. 
That is, we have $\tilde{\mu}(Y)=1$ and  $\lambda(a,\p)=\lambda_x(C_{a^*,\p}^{b},v)$ for every $(x,[v])\in Y$. 
Using the projection $p:\Delta_\infty\times [S]\supseteq \TDelta_\infty\to \Delta_\infty$, 
we get $\mu := p^*(\tilde{\mu})\in {\rm Erg} (\Delta_\infty,\p)$, cf. Remark \ref{relation-erg-Terg}.
Since $\mu(p(Y))=1$ we have $p(Y)\cap \Omega \neq \emptyset$. For every $x\in p(Y)\cap \Omega$ there is $v\in S$ such that $(x,[v])\in Y$ and therefore $\lambda(a,\p)=\lambda_x(C_{a^*,\p}^{b},v)$.
\end{proof}
\begin{rem} In the above assertion one can take $\Omega$ to be the  set of non-wandering points $\Omega(\varphi)$ for the  partial map $\varphi:\Delta \to X$ 
or even better the corresponding set $\Omega(\varphi|_{\Delta^a})$ for the restricted partial map $\varphi:\Delta\supseteq \Delta^a \to X$. 
\end{rem}
We may rephrase the above theorem in more appealing (but slightly weaker) form, using measure exponents:

 \begin{lem}\label{prop:Lyapunov_from_measures}
Let  $a$ be such that $\Delta_n\ni x\to \|C_{a,\p}^{f}(x,n)\|$ is bounded  and measurable,
for every $n\in \N$. For any $\mu\in {\rm Erg} (\Delta_\infty,\p)$ there exists a number  $\lambda_\mu(a,\p)\in [-\infty, +\infty)$ such that
$$
 \lambda_\mu({a,\p})=\lambda_x(C_{a,\p}^f)=\lambda_x(C_{a^*,\p}^b) \qquad \text{for $\mu$-almost every }x\in \Delta_\infty.
$$
\end{lem}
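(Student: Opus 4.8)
The plan is to recognize $g_n(x):=\ln\|C_{a,\p}^f(x,n)\|$ as a subadditive cocycle over the measure-preserving system $(\Delta_\infty,\p,\mu)$ and to invoke Kingman's subadditive ergodic theorem. Before doing so I would dispose of the equality between the forward and backward exponents, which in fact holds \emph{pointwise} and needs no ergodic theory. By \eqref{eq:cocycles_vs_adjoints} we have $C_{a^*,\p}^b(x,n)=C_{a,\p}^f(x,n)^*$, and since an operator and its adjoint have the same norm, $\|C_{a^*,\p}^b(x,n)\|=\|C_{a,\p}^f(x,n)\|$ for all $x\in\Delta_n$ and $n\in\N$. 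Hence $\lambda_x(C_{a^*,\p}^b)=\lambda_x(C_{a,\p}^f)$ everywhere on $\Delta_\infty$, and it remains only to treat the forward exponent.

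Next, submultiplicativity of the operator norm applied to the factorization $C_{a,\p}^f(x,n+m)=C_{a,\p}^f(x,n)\,C_{a,\p}^f(\p^n(x),m)$ gives $g_{n+m}(x)\le g_n(x)+g_m(\p^n(x))$, so the family $(g_n)$ is subadditive over $\p$. By Lemma \ref{lem1.0} the ergodic measure $\mu$ is carried by $\Delta_\infty$, on which $\p$ is everywhere defined and $\mu$-preserving, placing us in the correct setting for Kingman's theorem. The functions $g_n$ are measurable by hypothesis, and boundedness of $a$ yields $g_n(x)\le n\ln(\sup_{y\in\Delta}\|a(y)\|)$, so each $g_n^+$ is bounded, hence integrable, and $\int_{\Delta_\infty}g_n\,d\mu$ is well defined in $[-\infty,+\infty)$. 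I would then set $\lambda_\mu(a,\p):=\inf_{n\in\N}\frac1n\int_{\Delta_\infty}g_n\,d\mu=\lim_{n\to\infty}\frac1n\int_{\Delta_\infty}g_n\,d\mu\in[-\infty,+\infty)$, the two expressions agreeing by Fekete's lemma.

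Applying the subadditive ergodic theorem on $(\Delta_\infty,\mu,\p)$ then gives that $\frac1n g_n$ converges $\mu$-almost everywhere to a $\p$-invariant function, which by ergodicity of $\mu$ equals the constant $\lambda_\mu(a,\p)$ almost everywhere. Since the pointwise limit exists a.e., it coincides with the $\liminf$ defining $\lambda_x(C_{a,\p}^f)$; combining this with the first paragraph yields $\lambda_\mu(a,\p)=\lambda_x(C_{a,\p}^f)=\lambda_x(C_{a^*,\p}^b)$ for $\mu$-almost every $x\in\Delta_\infty$, as required.

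The only delicate point is the admissibility of $-\infty$ values of $g_n$, which occur exactly when the orbit of $x$ meets a zero of $a$; this forces using the version of Kingman's theorem for cocycles valued in $[-\infty,+\infty)$ rather than the classical $L^1$ statement, and is also the reason the constant $\lambda_\mu(a,\p)$ is allowed to be $-\infty$. Everything else---subadditivity, measurability, the upper bound from boundedness of $a$, and the reduction to $\Delta_\infty$---is routine.
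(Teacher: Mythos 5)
Your proposal is correct and follows essentially the same route as the paper: the paper likewise observes that $F_n(x):=\ln\|C_{a,\p}^f(x,n)\|=\ln\|C_{a^*,\p}^b(x,n)\|$ is subadditive ($F_{m+n}\le F_m\circ\p^n+F_n$) and concludes by Kingman's subadditive ergodic theorem. You merely supply more of the routine details (positive-part integrability, the identification of the constant via ergodicity, and the $-\infty$ caveat) that the paper leaves implicit.
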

\begin{proof}
The sequence of functions
$
F_n(x):=\ln \|C_{a,\p}^{f}(x,n)\|=\ln \|C_{a^*,\p}^{b}(x,n)\|,
$
is subadditive in the sense that  $F_{m+n}\le F_{m}\circ \p^n+ F_{n}$. Thus the assertion follows from Kingman's subadditive theorem, see for instance  \cite[Theorem~A.1]{Ruelle}.
\end{proof}
\begin{defn}\label{defn:Lyapunov_measures}
We call the number $\lambda_\mu({a,\p})$ defined in Lemma \ref{prop:Lyapunov_from_measures} the \emph{(maximal) measure exponent}
of $a$ with respect to the ergodic (partial) system $(X,\p,\mu)$. 
\end{defn}
\begin{cor}[Variational principle using Lyapunov exponents II]
 \label{cor:variational_principle_Lyapunov} Let $(X,\p)$ be a partial dynamical system and  let $\Delta\ni x \to a(x) \in \B(F)$ a bounded function  such that  $(X,\p,a)$  admits the continuous linear extension  (this holds, e.g., when $a\in C(\Delta,\B(F))$). Then
$$
\lambda(a,\p) =\max_{\mu \in {\rm Erg} (\Delta_\infty,\p)}\, \lambda_\mu({a,\p}).
$$
That is, the spectral exponent is the maximum of measure exponents. The set $\Delta_\infty$ above can be replaced by the essential domain
of  $\varphi:\{x\in \Delta: a(x)\neq 0\}\to X$. 
\end{cor}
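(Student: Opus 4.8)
The plan is to deduce this corollary from the already established Theorem~\ref{thm:variational_principle_Lyapunov} by repackaging the pointwise Lyapunov exponents as measure exponents via Lemma~\ref{prop:Lyapunov_from_measures}. Since $\lambda_\mu(a,\p)$ is defined (Definition~\ref{defn:Lyapunov_measures}) only through the $\mu$-almost everywhere identity $\lambda_\mu(a,\p)=\lambda_x(C_{a,\p}^f)$, the whole argument reduces to the two inequalities $\max_{\mu}\lambda_\mu(a,\p)\le \lambda(a,\p)$ and $\lambda(a,\p)\le \max_{\mu}\lambda_\mu(a,\p)$, where $\mu$ ranges over ${\rm Erg}(\Delta_\infty,\p)$; the second amounts to exhibiting a single ergodic measure that attains the value $\lambda(a,\p)$.

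For the upper bound I would fix any $\mu\in{\rm Erg}(\Delta_\infty,\p)$. By Lemma~\ref{prop:Lyapunov_from_measures} there is a $\mu$-conull set of points $x$ with $\lambda_\mu(a,\p)=\lambda_x(C_{a,\p}^f)$, and the inequality chain recorded at the outset of the proof of Theorem~\ref{thm:variational_principle_Lyapunov}, namely $\lambda_x(C_{a,\p}^f)\le\lambda(C_{a,\p}^f)\le\lambda(a,\p)$, immediately yields $\lambda_\mu(a,\p)\le\lambda(a,\p)$. Taking the supremum over $\mu$ gives one half of the claimed equality.

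For attainment I would take the ergodic measure furnished by Corollary~\ref{cor:for_spectral_exponent}: there is $\tilde{\mu}\in{\rm Erg}(\TDelta_\infty,\tilde\p)$ and a conull set $Y\subseteq\TDelta_\infty\subseteq\Delta_\infty\times[S]$ on which $\lim_{n\to\infty}\frac{1}{n}\ln\|C_{a^*,\p}^b(x,n)v\|=\lambda(a,\p)$. Projecting along $p:\Delta_\infty\times[S]\to\Delta_\infty$ produces $\mu:=p^*(\tilde{\mu})\in{\rm Erg}(\Delta_\infty,\p)$ by Remark~\ref{relation-erg-Terg}, and $\mu(p(Y))=1$. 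For $x$ lying in the intersection of $p(Y)$ with the conull set of Lemma~\ref{prop:Lyapunov_from_measures} there is a direction $v\in S$ with $(x,[v])\in Y$, so that $\lambda_x(C_{a^*,\p}^b,v)=\lambda(a,\p)$; since one always has $\lambda_x(C_{a^*,\p}^b,v)\le\lambda_x(C_{a^*,\p}^b)=\lambda_x(C_{a,\p}^f)=\lambda_\mu(a,\p)\le\lambda(a,\p)$, all of these quantities collapse to a common value and $\lambda_\mu(a,\p)=\lambda(a,\p)$. I expect this last collapse to be the only delicate point: the linear extension controls merely the \emph{directional} exponent along $v$, which a priori is only a lower bound for the operator-norm exponent governing $\lambda_\mu(a,\p)$, so attainment hinges on squeezing it between the directional value $\lambda(a,\p)$ and the global upper bound $\lambda(a,\p)$. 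The closing assertion, that $\Delta_\infty$ may be replaced by the essential domain of $\varphi$ on $\{x:a(x)\neq 0\}$, then follows verbatim from the corresponding statements in Remark~\ref{relation-erg-Terg} and Theorem~\ref{thm:variational_principle_Lyapunov}.
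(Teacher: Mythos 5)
Your proposal is correct and follows essentially the same route as the paper, whose proof is a one-line citation of Theorem~\ref{thm:variational_principle_Lyapunov} together with Lemma~\ref{prop:Lyapunov_from_measures}; your unpacking of the attainment step via Corollary~\ref{cor:for_spectral_exponent} and the projection $p^*(\tilde\mu)$ simply reproduces what the proof of that theorem does internally. The squeeze $\lambda(a,\p)=\lambda_x(C_{a^*,\p}^b,v)\le\lambda_x(C_{a,\p}^f)=\lambda_\mu(a,\p)\le\lambda(a,\p)$ that you flag as the delicate point is indeed the intended mechanism, and it is handled correctly.
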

\begin{proof} It follows from Theorem \ref{thm:variational_principle_Lyapunov} along with Lemma~\ref{prop:Lyapunov_from_measures}.
\end{proof}
\begin{rem}\label{rem:MET} The above variational principles are closely related with   Multiplicative Ergodic Theorems (MET). 
The first MET  was established by Oseledets \cite{oseled} and various generalizations keep appearing until the present times, see \cite{GTQ} and references therein. 
All MET's apply to backwards cocycles and require some compacntess assumptions. In particular, if one wants to combine them with the formula for the spectral exponent $\lambda(a,\p)$ one needs to pass to duals.
For our purposes versions of  MET's in \cite{Thieullen} and \cite{GTQ}    seem the most relevant. They imply  that if $\Delta\ni x \to a^*(x) \in \B(F^*)$ is asymptotically compact, and either $F^{**}$ is separable (\cite[Theorem 14]{GTQ})
or $a^*$ is continuous and $\Delta_\infty$ is a complete metric space (\cite[Theorem 2.3]{Thieullen}), 
then for every $\mu \in {\rm Erg} (\Delta_\infty,\p)$ we have Oseledets filtrations  of $F^*$ for the cocycle $C_{a^*,\p}^b$.
That is, for every $\mu \in {\rm Erg} (\Delta_\infty,\p)$   there exists  a  decrising sequence of numbers $\{\lambda_{i}\}_{i=1}^r$, where $1\leq r \leq \infty$,
 such that for $\mu$-almost every $x\in \Delta_\infty$  there is a measurable filtration of closed subspaces, 
$F^* = V_1(x) \supsetneq  V_2(x) \supsetneq \dots   \supseteq V_{r+1}(x):=\bigcap_{i=1}^r V_{i}(x)$ satisfying
 $a^*(x)(V_i(x))\subseteq V_i(\p(x))$,   the codimension of  $V_i(x)$ is finite and does not depend on $x$, 
$\lambda_x(C_{a^*,\p}^{b},v)=-\infty$  for every $v\in V_{r+1}(x)$, and  
$$
\lambda_i=\lambda_x(C_{a^*,\p}^{b},v) \quad \text{ for every $v\in V_i(x)\setminus V_{i+1}(x)$ and each $i<r+1$}.
$$
In particular, $\lambda_{\mu}(a)=\lambda_1$ realizes as a Lyapunov exponent $\lambda_x(C_{a^*,\p}^{b},v)$  in the direction $v$ for every
$v\in V_1(x)\setminus V_{2}(x)$ and $\mu$-almost every $x\in \Delta_\infty$. 
We stress that  by Theorem \ref{thm:variational_principle_Lyapunov}  the spectral exponent $\lambda(a,\p)$ always realizes as  a Lyapunov exponent $\lambda_x(C_{a^*,\p}^{b},v)$  for some direction $v$
and some measure $\mu$, without any compactness assumption on $a$!
\end{rem}

\section{Spectral radius of weighted endomorphisms of $A=C(X,D)$} 
\label{sec:spectral_radius}

 In this section, we use VPs obtained in the previous section, to give formulae for the spectral radii $r(a\alpha)$, $a\in A$, under the following assumptions:
\begin{itemize}
\item $A\cong C(X,D)$ where $D$ is a unial Banach algebra;
\item $\alpha: A\to A$ is contractive and $\alpha(C(X)\otimes 1)\subseteq \alpha(1)C(X)\otimes 1$. Then  by Proposition \ref{tensor podstawowe},  $\alpha$ generates a partial dynamical system $(X,\p)$, see Definition \ref{def:endomorphism_generate_partial_dynamical_system}. 

\end{itemize}
As we have seen in Proposition \ref{first radius}, the spectral radius of an abstract weighted shift operator $aT:E\rightarrow E$ coincides with the spectral radius  of the associated weighted endomorphism
$a\alpha:A \rightarrow A$. 
 In particular,  the situation where $E=L^p(X,\B(F))$ is a vector-valued function space, $T$ is a composition operator, and $A$ consists of operators of multiplication by operator-valued functions in $C(X,D)$, where  $D\subseteq \B(F)$.
But the developed  formulae can also  be   applied when $T$ is not a priori a composition operator. 

We start with two extremal cases, when \(D=\C\) is trivial or \(X=\{x\}\) is trivial.  Then we  get, respectively: the formula for  $r(a\alpha)$ where $A$ is a
commutative uniform algebra (subsection \ref{s-comm}), and    a 'Dynamical Variational Principle' and an intriguing version of Gelfand's formula  for the spectral radius $r(a)$ of an arbitrary operator $a\in B(F)$ (subsection \ref{DVP-arb}).

In subsection \ref{s-varp-space} we consider the case where $D=\B(F)$ and the associated field of endomorphisms $\{\alpha_x\}_{x\in \Delta}$ consists of  inner endomorphism of $\B(F)$. 
Finally, in subsection \ref{varp-gen} we derive formulas for spectral radius where \(D\) is arbitrary. We achieve this by reducing the general case to the special one treated in subsection \ref{s-varp-space}.

\subsection{Variational principle for commutative algebra of weights} 
\label{s-comm}
Here  we assume that $A$ is a \emph{uniform algebra}, sometimes also called function algebra \cite[30.1]{Zelazko}. 
Thus $A$ is a commutative Banacha algebra such that $\|a\|=r(a)$ for every  $a\in A$.
Equivalently, this means that  the Gelfand transform on $A$ is an isometry, and   we may view $A$ as a closed subalgebra of  $C(X)$. 
 The following variational principle   generalizes and unifies  the corresponding results in  \cite{Kitover}, \cite{Lebedev79}, \cite[4]{Anton_Lebed}, \cite[5]{Anton}, \cite{kwa-phd}. We could deduce it either from  one of Theorems \ref{thm:varp-principle_for_cocycles}, \ref{thm:variational_principle_Lyapunov} or from the formula for spectral radius of a weighted unital endomorphism on $C(X)$ \cite[5]{Anton}. 
We give a short proof based on Theorem~\ref{Var_princ_for_ergodic_sums}: 
 \begin{thm}\label{promień spektralny dla sumy}
Let $\alpha:A\to A$ be an endomorphism  of a uniform algebra $A$. Let  $(X,\varphi)$ be the partial dynamical system dual to $(A,\alpha)$, see Definition \ref{odwzorowania czesciowe label}. Then for any  $a \in A$ the spectral radius  of the weighted endomorphism $a\alpha:A\to A$ is given by 
\begin{equation}\label{wzór na promień spektralny aT}
r(a\alpha)= \max_{\mu\in {\rm Inv} (\Delta_\infty,\p)} \, \exp \int_{\Delta_\infty} \ln|\widehat{a}(x)|\, d\mu = \max_{\mu\in {\rm Erg} (\Delta_\infty,\p)} \, \exp \int_{\Delta_\infty} \ln|\widehat{a}(x)|\, d\mu,
\end{equation}
where $\ln (0) = -\infty$,  $\exp(-\infty)=0$ and  $r(a\alpha)=0$ if  $\Delta_\infty = \emptyset$.  For a fixed $a\in A$, 
the set $\Delta_\infty$ above can be replaced by the essential domain
of  $\varphi:\{x\in \Delta: a(x)\neq 0\}\to X$. 
\end{thm}
\begin{proof}
We have $r(a\alpha)=\lim_{n\to \infty}\| a\alpha(a) {\dots}\, \alpha^{n}(a)\|^{\frac{1}{n}}$ by Proposition \ref{first radius}.  
Using this and that the Gelfand transform is isometric on $A$ we get 
\begin{align*}
\ln r(a\alpha) &= \lim_{n\to \infty}{\frac{1}{n}} \ln\| a\alpha(a) {\dots}\, \alpha^{n}(a)\| =\lim_{n\to \infty}\sup_{x\in \Delta_n} \frac{1}{n}\sum_{k=o}^{n-1} \ln |\widehat{\alpha^k(a)}|(x) 
\\
&=\lim_{n\to \infty}\sup_{x\in \Delta_n} S_n (\ln |\widehat{a}|)(x).
\end{align*}
Thus the assertion follows from Theorem~\ref{Var_princ_for_ergodic_sums}.
	\end{proof}

\begin{rem}\label{uwagi o zasadzie wariacyjnej}
  Formula  \eqref{wzór na promień spektralny aT} can be  improved in the following sense, cf. \cite{Kitover}.  A closed subset $F\subseteq X$ is called a \emph{maximizing  set} for algebra  $A$ if $\|a\|=\max_{x\in F}|\widehat{a}(x)|$ for every  $a\in A$. There exists a uniquely defined minimal maximizing set for  $A$ which is called \emph{Shilov boundary} and is denoted by $\partial A$. If a map  $\p$ preserves  $\partial A$, which is always the case when   $\alpha:A\to A$ is an epimorphism (one can apply  \cite[Theorem~15.3]{Zelazko}), then  for every $a\in A$,  $\p$ preserves  $\partial A \cap \Delta_{\infty}^a$ where $\Delta_{\infty}^a$ is the essential domain for 
	$\varphi:\{x\in \Delta: a(x)\neq 0\}\to X$ and 
	$$
r(a\alpha)=\max_{\mu\in {\rm Erg} (\partial A \cap \Delta_{\infty}^a,\p)}\, \exp \int_{\partial A \cap \Delta_\infty^a} \ln|\widehat{a}(x)|\, d\mu\,.
$$ 
\end{rem}
\begin{ex}[Classical weighted shift operators] 
 \label{promienisty operator klasyczny}
 In this example we  show the relation between the above considered variational principle  (Theorem~\ref{promień spektralny dla sumy}), Banach limits and the known formula for the spectral radius of the classical weighted shift operator. 
Also  $\lim\sup$ variational principle  (Theorem~\ref{Var_princ_for_ergodic_sums})  arises  herewith naturally.
 Namely, let $aT_\N$ be the classical weighted shift operator with a weight $a\in  \ell^\infty (\N)$ 
acting in one of the spaces $\ell^p(\N)$, $ p\in [1, \infty]$, $c(\N)$ or $c_0(\N)$, cf. Example~\ref{operator wazonego przesuniecia na l_p}. 
In this case, the spectrum $\beta(\N)$ of  $\ell^\infty (\N)$ is the Stone-\v{C}ech compactification of 
$\N$, and $T_\N$ generates on $\beta(\N)$  the map $\varphi$ such that $\varphi(n)=n+1$ for $n\in \N \subseteq \beta(\N)$.  
In particular, the set $\Delta_\infty=\bigcap_{n\in \N}\varphi^n(\beta(\N))$ is  the corona $\beta(\N)\setminus \N$ 
and every $\varphi$-invariant measure is contained in $\beta(\N)\setminus \N$. 
In other words, the functional $\phi_\mu \in \big(C(\beta(\N)\big)^*$ corresponding 
to a measure $\mu \in {\rm Inv}(\beta(\N),\varphi)$ 
satisfies conditions 
$$
\|\phi_\mu\|=\phi_\mu(1)= 1, \qquad  \phi_\mu (a\circ \varphi)=\phi_\mu(a).
$$
Hence it is a \emph{Banach limit} on $\ell^\infty_\R(\N)$. Moreover, each Banach limit is of this form. Therefore, denoting by $\text{\bf BL}$ 
the set of all Banach limits,  the variational principle in Theorem~\ref{promień spektralny dla sumy}, formula \eqref{wzór na promień spektralny aT}, 
in this case assumes the form: 
\begin{equation}\label{Banach limity na jutro}
r(aT_\N)=\max_{\phi\in \text{\bf BL}} \, \exp \phi\left(\{ \ln|a(n)|\}_{n\in \N}\right),
\end{equation}
where  $\phi (\ln |a|)$ is treated as the integral with respect to the corresponding measure, when  $\ln |a|$ does not belong to $\ell^\infty_\R(\N)$.
This formula is the most effective for the so-called \emph{almost convergent sequences}, that is elements $a\in \ell^\infty_\R(\N)$ for which the value $\phi(a)$ does not depend on the choice of a Banach limit $\phi\in \text{\bf BL}$. As examples of sequences of this sort one can take convergent sequences, periodic sequences and their sums. 
In general, as shown by G. G. Lorentz \cite{Lor}, a sequence $a=\{a(n)\}_{n\in\N}\in \ell^\infty_\R(\N)$ is almost convergent iff the sequence $\{a_n\}_{n\in \N}
\subseteq \ell^\infty_\R(\N)$ of means  $a_n(k):=\frac{1}{n}\sum_{i=0}^{n-1}a(k+i)$ is convergent in $\ell^\infty_\R(\N)$ to a constant sequence and in this case the value of this constant is equal to the Banach limit for each Banach limit of $a$. This statement readily follows from the formula
$$
\max_{\phi\in \text{\bf BL}}\phi(a)=  \,\lim_{n\to \infty} \sup_{k\in \N}\frac{1}{n}\sum_{i=0}^{n-1}a(k+i),
$$
which in turn follows from our $\lim\sup$ variational principle (Theorem~\ref{Var_princ_for_ergodic_sums}).
Thus our results recover Lorentz's theorem. Combining the above expression with  \eqref{Banach limity na jutro} 
we get  
$$
r(aT_\N)=\, \lim_{n\to \infty} \sup_{k\in \N} \sqrt[n]{\prod_{i=0}^{n-1}|a(k+i)|},
$$
which is the standard formula (see  \cite[Problem 77]{Halmos}). Formula \eqref{Banach limity na jutro}  seem more abstract but also more 
efficient. For instance, if  $a\in \ell^\infty (\N)$ is such that the sequence  $\{ \ln|a(n)|\}_{n}$ is almost convergent, then in   \eqref{Banach limity na jutro} it is enough to apply any Banach limit, for example, taking  Ces\`aro mean one obtains  
$
r(aT_\N)=\, \lim_{n\to \infty} \sqrt[n]{\prod_{k=0}^{n-1}|a(k)|}.
$  
\end{ex}

\subsection{Dynamical variational principle for an arbitrary operator}\label{DVP-arb}
Let $a\in \B(F)$ be \emph{an arbitrary} operator acting on an arbitrary Banach space $F$. 
Let  $[S]=S/\sim$ be the projective space associated to the dual space $F^*$, cf. Remark \ref{relation-erg-Terg}.
 Consider the partial mapping $\tilde{\varphi} : \TDelta \to  {[S]}$  given by
\begin{equation}
\label{arb-VP-phi}
\tilde{\varphi} ([v]) := 
\left[\frac{a^*(v)}{\Vert a^*(v)\Vert}\right], \qquad  \TDelta:= \{ [v]: a^*(v) \neq 0\}.
\end{equation}
\begin{thm}
\label{arb-VP}
Let $F$ be a Banach space. Then for every $a\in \B(F)$
\begin{equation}
\label{arb-VP-form}
r(a) = \max_{\mu \in {\rm Inv}({[S]}\,, \tilde{\varphi})} \exp \int_{{[S]}} \ln \Vert a^* (v)\Vert \, d\mu =
\max_{\mu \in {\rm Erg}({[S]}\,, \tilde{\varphi})} \exp \int_{{[S]}} \ln \Vert a^* (v)\Vert \, d\mu
\end{equation}
where $\tilde{\varphi}$ is given by \eqref{arb-VP-phi},  and $r(a) =0$ if $ {\rm Erg}\left({[S]}\,,\tilde{\varphi}\right) = \emptyset$.  
\end{thm}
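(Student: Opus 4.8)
The plan is to apply Theorem \ref{thm:varp-principle_for_cocycles} (the variational principle using linear extension) to the degenerate case where the base space is a single point $X = \{x\}$. The key observation is that an arbitrary operator $a \in \B(F)$ may be regarded as an element of $C(\{x\}, \B(F))$, i.e.\ as a constant operator-valued function on a trivial partial dynamical system. With $X = \{x\}$ and $\varphi$ the identity map on $\{x\}$, the associated forward cocycle reduces to iterated powers of $a$, namely $C_{a,\varphi}^f(x,n) = a^n$, so that the spectral exponent becomes
\[
\lambda(a,\varphi) = \lim_{n\to\infty} \frac{1}{n}\ln\|a^n\| = \ln r(a)
\]
by Gelfand's formula. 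This identifies $\ln r(a)$ with the spectral exponent to which the variational principle applies.

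First I would verify that the triple $(\{x\}, \varphi, a)$ admits a continuous linear extension in the sense of Definition \ref{defn:linear_extension}. Since the base is a single point, the extended space $\X = \{x\}\times[B]$ is naturally identified with $[B]$, and the extended domain $\TDelta$ identifies with $\{[v] : a^*(v)\neq 0\}$, which is exactly the set in \eqref{arb-VP-phi}. The extended map $\tilde\varphi$ collapses to the map on $[S]$ given in \eqref{arb-VP-phi}, and the extended weight becomes $\tilde a([v]) = \|a^*(v)\|$; both are continuous since $a^*$ is a fixed bounded operator (here continuity of $a$ on a singleton is automatic, so the hypothesis of Theorem \ref{thm:varp-principle_for_cocycles} is satisfied trivially). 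By Remark \ref{relation-erg-Terg} the essential domain $\TDelta_\infty$ sits inside $[S]$, and invariant (ergodic) measures for $\tilde\varphi$ on $\TDelta_\infty$ coincide with those on $[S]$; thus the suprema in the variational principle may be taken over $\mathrm{Inv}([S],\tilde\varphi)$ and $\mathrm{Erg}([S],\tilde\varphi)$ respectively.

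With these identifications in place, Theorem \ref{thm:varp-principle_for_cocycles} directly yields
\[
\ln r(a) = \lambda(a,\varphi) = \max_{\mu\in\mathrm{Inv}([S],\tilde\varphi)} \int_{[S]} \ln\|a^*(v)\|\,d\mu = \max_{\mu\in\mathrm{Erg}([S],\tilde\varphi)} \int_{[S]} \ln\|a^*(v)\|\,d\mu,
\]
and exponentiating gives \eqref{arb-VP-form}, with the convention $r(a)=0$ when $\mathrm{Erg}([S],\tilde\varphi)=\emptyset$ corresponding to $\lambda(a,\varphi)=-\infty$. I expect the \textbf{main obstacle} to be purely bookkeeping: one must carefully check that the notation specializes correctly when $X$ is a point—in particular that the projective-space map \eqref{arb-VP-phi} is precisely the restriction of the general $\tilde\varphi$ from \eqref{e-lin-ext1}, and that the essential-domain reductions of Remark \ref{relation-erg-Terg} allow replacing $\TDelta_\infty$ by all of $[S]$ in the statement. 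No deep estimate is needed beyond Gelfand's formula; the content is entirely in recognizing that the abstract machinery of Section \ref{varp-gen-delta} contains this classical spectral-radius statement as its simplest nontrivial instance.
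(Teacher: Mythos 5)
Your proposal is correct and follows essentially the same route as the paper: the authors likewise specialize to $X=\{x\}$ with $\varphi$ the identity, note $\ln r(a)=\lambda(a,\varphi)$ via Remark \ref{rem:spectral_exponents_and_radii} (i.e.\ Gelfand's formula), and then invoke Theorem \ref{thm:varp-principle_for_cocycles} together with Remark \ref{relation-erg-Terg}. Your write-up merely spells out the bookkeeping that the paper leaves implicit.
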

\begin{proof} Note that $\ln r(a)=\lambda(a,\p)$ where  $X:= \{x\}$ is a singleton and $\varphi (x) := x$ is the identity map on $X$, cf. Remark \ref{rem:spectral_exponents_and_radii}. 
Hence the assertion follows from  Theorem ~\ref{thm:varp-principle_for_cocycles} and Remark \ref{relation-erg-Terg}.
\end{proof}
Obviously, one should not expect that the formula \eqref{arb-VP-form} simplifies calculation of the spectral radius of $a$.
Nevertheless, it allows intriguing dynamical interpretations and has  interesting consequences:
\begin{ex}\label{ex:2_times_2_matrix} 
Let us consider an invertible operator acting on a two-dimensional complex space. So that we may identify it with a 
$2 \rtimes 2$ matrix 
$\mathbb{A}=\left(\begin{array}{cc}
a & b 
\\
c & d
\end{array}\right)
$. The corresponding projective space $[S]=\C P^1$ can be identified with the Riemann sphere $\overline{\C}$,
and then $\tilde{\varphi}$ becomes the M\"obius transformation $\tilde{\varphi}(z)=\frac{az + b}{cz +d}$
and \eqref{arb-VP-form} assumes the form
$$
r(\mathbb{A})=\max_{\mu \in {\rm Erg}(\overline{\C}, \tilde{\varphi})} \exp \int_{\overline{\C}} \ln f_{\mathbb{A}}(z)\, d\mu,
$$
where $ f_{\mathbb{A}}(z):=(|az+b|+|cz+d|)/(|z|+1)$, for $z\in \C$, and $ f_{\mathbb{A}}(\infty):=|a|+|c|$. 
One may verify that $\tilde{\varphi}$ either has exactly one attracting point $z_0$, or it has at least 
two neutral fixed points. In the first case $\delta_{z_0}$ is the unique ergodic measure
for which the above maximum is attained, and so  $r(\mathbb{A})= f_{\mathbb{A}}(z_0)$. In the second case
the maximum is attained in  $\delta_z$ for every neutral fixed point $z\in \overline{\C}$ and so
$ r(\mathbb{A})= f_{\mathbb{A}}(z)$ for any such point. In both cases  M\"obius transformation has topological entropy zero.
Thus  the spectral 
exponent $\ln r(\mathbb{A})$ is equal to the topological pressure $P(\tilde{\varphi}, \ln f_{\mathbb{A}})$
of $\tilde{\varphi}$ with potential $\ln f_{\mathbb{A}}$.




\end{ex}
We obtained Theorem \ref{arb-VP} as a special case of Theorem ~\ref{thm:varp-principle_for_cocycles}. 
Applying in the same manner Corollary \ref{cor:variational_principle_Lyapunov} one gets nothing but 
$r(a)=\lim_{n\to \infty}\| a^n\|^{\frac{1}{n}}$. 
However, applying formula \eqref{eq:Lyapunov_essential} in Theorem \ref{thm:variational_principle_Lyapunov}
 gives  an interesting
 improvement of the Gelfand's formula:
\begin{cor}[refined Gelfand's formula]
\label{cor-Gelf}
For every operator $a\in \B(F)$ on a Banach space $F$ we have 
$$
r(a)= \max_{v\in S} \lim_{n\to \infty} \|a^{*n}v\|^{\frac{1}{n}}
$$
where $S$ is the unit sphere in the dual space $F^*$. Thus if $F$ is a reflexive  space, 
 the spectral exponent  is always the  Lyapunov exponent in some direction $v\in F$:
$$
r(a)= \lim_{n\to \infty} \|a^{n}v\|^{\frac{1}{n}}.
$$ 
\end{cor}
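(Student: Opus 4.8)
The plan is to realize $\ln r(a)$ as the spectral exponent of a one-point partial dynamical system and then specialize the variational principle \eqref{eq:Lyapunov_essential}. I would take $X:=\{x\}$ to be a singleton, let $\p(x):=x$ be the identity map, and regard $a$ as the constant weight $a(x):=a$. As already observed in the proof of Theorem \ref{arb-VP}, this triple $(X,\p,a)$ admits the continuous linear extension described in \eqref{arb-VP-phi}, and $\ln r(a)=\lambda(a,\p)$ by Remark \ref{rem:spectral_exponents_and_radii}. Since $\p$ is the identity, $\Delta_\infty=\{x\}$ and the cocycles degenerate to powers, $C_{a,\p}^{f}(x,n)=a^n$ and $C_{a^*,\p}^{b}(x,n)=a^{*n}$. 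Choosing $\Omega:=\{x\}$ (which carries full mass for the unique ergodic measure $\delta_x$) in Theorem \ref{thm:variational_principle_Lyapunov}, formula \eqref{eq:Lyapunov_essential} collapses to
$$
\ln r(a)=\lambda(a,\p)=\max_{v\in S}\lambda_x(C_{a^*,\p}^{b},v)=\max_{v\in S}\liminf_{n\to\infty}\frac{1}{n}\ln\|a^{*n}v\|.
$$

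The only remaining point, and the one step that is not purely formal, is to upgrade the $\liminf$ to a genuine $\lim$ at the maximizing direction. The maximum above is attained, so I would fix $v_0\in S$ with $\liminf_{n}\frac{1}{n}\ln\|a^{*n}v_0\|=\ln r(a)$. On the other hand, for every $v\in S$ one has $\|a^{*n}v\|\le\|a^{*n}\|$, whence
$$
\limsup_{n\to\infty}\frac{1}{n}\ln\|a^{*n}v\|\le\lim_{n\to\infty}\frac{1}{n}\ln\|a^{*n}\|=\ln r(a^*)=\ln r(a),
$$
the middle equality being Gelfand's formula and the last the standard identity $r(a^*)=r(a)$. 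Applying this with $v=v_0$ squeezes $\liminf$ and $\limsup$ together, so the limit exists and $\lim_{n}\|a^{*n}v_0\|^{1/n}=r(a)$. Since the same bound shows the limit, wherever it exists, never exceeds $r(a)$, the quantity $\max_{v\in S}\lim_{n}\|a^{*n}v\|^{1/n}$ equals $r(a)$ and is attained at $v_0$, which is exactly the first asserted formula.

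For the reflexive case I would simply feed the operator $a^*\in\B(F^*)$ into the formula just proved, obtaining
$$
r(a^*)=\max_{w\in S_{F^{**}}}\lim_{n\to\infty}\|a^{**n}w\|^{1/n},
$$
where $S_{F^{**}}$ denotes the unit sphere of $F^{**}$. When $F$ is reflexive the canonical embedding identifies $F^{**}$ with $F$ and $a^{**}$ with $a$, while $r(a^*)=r(a)$; substituting gives $r(a)=\max_{v\in S_F}\lim_{n}\|a^{n}v\|^{1/n}$, so the maximizing direction $v\in F$ realizes $r(a)$ as the Lyapunov exponent $\lim_{n}\|a^{n}v\|^{1/n}$. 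The main (and essentially only) obstacle is the $\liminf$-to-$\lim$ passage, which rests on the a priori upper bound furnished by Gelfand's formula; everything else is a direct specialization of the already established variational principle.
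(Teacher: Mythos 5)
Your proof follows the same route as the paper: both specialize Theorem \ref{thm:variational_principle_Lyapunov} (formula \eqref{eq:Lyapunov_essential}) to the singleton system $X=\{x\}$, $\p(x)=x$, where the cocycles degenerate to powers, and both obtain the reflexive case by applying the first formula to $a^*$ and identifying $F^{**}$ with $F$. Your explicit upgrade of the $\liminf$ in the definition of $\lambda_x(C^b_{a^*,\p},v)$ to a genuine limit at the maximizing direction, via the Gelfand upper bound $\limsup_{n}\frac{1}{n}\ln\|a^{*n}v\|\le \ln r(a)$, is a detail the paper leaves implicit, and you handle it correctly.
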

\begin{proof}
The first part follows from Theorem \ref{thm:variational_principle_Lyapunov} applied to the case  $X:= \{x\}$ and $\varphi (x) := x$. 
This implies that  $r(a)=r(a^*)=\lim_{n\to \infty} \|(a^{**n})v\|^{\frac{1}{n}}$, for some $v\in F^{**}$. 
And if $F$ is reflexive we may assume identifications  $F\cong F^{**} $ and $a = a^{**}$.  
\end{proof}

\subsection{Variational principles  in the case $A=C(X,\B(F))$}
\label{s-varp-space}
In this subsection we make the following standing assumptions 
\begin{itemize}
\item $A=C(X,\B(F))$ where $F$ is a Banach space;
\item $\alpha:A\to A$ is an endomorphism 
such that the corresponding field of  (non-zero) endomorphisms $\{\alpha_x\}_{x\in \Delta}$ of
 $\B(F)$  consists of inner isometric monomorphisms. 
\end{itemize}
Then by Propositions \ref{zapowiedziane twierdzenie}, \ref{prop:on_inner_endo} we  have
\begin{equation}\label{eq:desired_endomorphism_form}
\alpha(a)(x)=
\begin{cases}
T_x a(\p(x))S_x,& x \in \Delta,\\
0,& x\notin \Delta,
\end{cases}
 \qquad  a \in C(X,\B(F)),
\end{equation}
where $\p:\Delta \rightarrow X$ is a continuous partial map and  $\{T_x, S_x\}_{x\in \Delta}\subseteq \B(F)$ are such that $S_xT_x=1$ and $T_x$ is an isometry, for every $x\in \Delta$.
In this situation we may extend Remark \ref{rem:spectral_exponents_and_radii} and express the spectral radius of \(a\alpha\) by a spectral exponent of a forward cocycle associated with   $\Delta\ni x \to a(x)T_x$:
\begin{prop}
\label{prop:coc-aT} With the above assumptions, for every  $a\in A=C(X,\B(F))$ we have
$$
r(a\alpha) 
= \lim_{n\to \infty} \sup_{x\in \Delta_n}\Vert C_{aT, \varphi}^f (x,n)\Vert^{\frac{1}{n}}= e^{\lambda(aT,\varphi)},
$$
where $aT$ denotes the function $\Delta\ni x \to a(x)T_x\in \B(F)$, and 
$C_{aT,\p}^f(x,n):=a(x)T_x\cdot a(\p(x))T_{\p(x) } \cdot{\dots} \cdot  
a(\p^{n-1}(x))T_{\p^{n-1}(x)}$ is the associated forward cocycle.
\end{prop}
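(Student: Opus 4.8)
The second asserted equality, $\lim_{n\to\infty}\sup_{x\in\Delta_n}\|C_{aT,\p}^f(x,n)\|^{1/n}=e^{\lambda(aT,\p)}$, is nothing but the definition of the spectral exponent (Definition~\ref{def:spectral_exponent}) applied to the bounded function $\Delta\ni x\mapsto a(x)T_x$ (boundedness is clear since $\|a(x)T_x\|\le\|a(x)\|$), so the real content is the first equality. The plan is to compare the family $C_{aT,\p}^f(x,n)$ with the products $C_{a,\alpha}(x,n):=\big(a\cdot\alpha(a)\cdots\alpha^{n-1}(a)\big)(x)$ appearing in Remark~\ref{rem:spectral_exponents_and_radii}, which already gives $r(a\alpha)=\lim_{n\to\infty}\sup_{x\in\Delta_n}\|C_{a,\alpha}(x,n)\|^{1/n}$. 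Thus it suffices to show that the suprema of the two families of norms have the same exponential growth rate.

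First I would compute $\alpha^k(a)(x)$ from \eqref{eq:desired_endomorphism_form} by induction, obtaining for $x\in\Delta_k$ the expression $\alpha^k(a)(x)=T_x T_{\p(x)}\cdots T_{\p^{k-1}(x)}\,a(\p^k(x))\,S_{\p^{k-1}(x)}\cdots S_{\p(x)}S_x$. Multiplying these blocks for $k=0,\dots,n-1$, the junction between the $k$-th and $(k+1)$-st block is $\big(S_{\p^{k-1}(x)}\cdots S_x\big)\big(T_x\cdots T_{\p^{k}(x)}\big)$, and the relations $S_{\p^{j}(x)}T_{\p^{j}(x)}=1$ (Proposition~\ref{prop:on_inner_endo}(1)) collapse the inner pairs, leaving just $T_{\p^{k}(x)}$. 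Carrying out this telescoping throughout yields
\[
C_{a,\alpha}(x,n)=C_{aT,\p}^f(x,n-1)\,a(\p^{n-1}(x))\,\big(S_{\p^{n-2}(x)}\cdots S_x\big),\qquad x\in\Delta_n ,
\]
with the convention $C_{aT,\p}^f(x,0)=1$. Note the genuine discrepancy with $C_{aT,\p}^f(x,n)=C_{aT,\p}^f(x,n-1)\,a(\p^{n-1}(x))\,T_{\p^{n-1}(x)}$: one ends with a leftover string of $S$'s, the other with the final isometry $T_{\p^{n-1}(x)}$.

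Writing $R(x):=C_{aT,\p}^f(x,n-1)\,a(\p^{n-1}(x))$, I would then use three facts: each $T_{\p^{j}(x)}$ is an isometry, each $S_{\p^{j}(x)}$ is a contraction ($\|S_{\p^j(x)}\|=1$ by Proposition~\ref{prop:inner_injective_endo}), and $\big(S_{\p^{n-2}(x)}\cdots S_x\big)\big(T_x\cdots T_{\p^{n-2}(x)}\big)=1$. Right multiplication by the contraction $S_{\p^{n-2}(x)}\cdots S_x$ gives $\|C_{a,\alpha}(x,n)\|\le\|R(x)\|$, while right multiplication of $C_{a,\alpha}(x,n)$ by the norm-one product $T_x\cdots T_{\p^{n-2}(x)}$ recovers $R(x)$ and gives $\|R(x)\|\le\|C_{a,\alpha}(x,n)\|$; hence $\|C_{a,\alpha}(x,n)\|=\|R(x)\|$. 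Since $\|R(x)T_{\p^{n-1}(x)}\|\le\|R(x)\|$ and $\|R(x)\|\le\|a\|\,\|C_{aT,\p}^f(x,n-1)\|$, this produces the two-sided estimate
\[
\|C_{aT,\p}^f(x,n)\|\ \le\ \|C_{a,\alpha}(x,n)\|\ \le\ \|a\|\,\|C_{aT,\p}^f(x,n-1)\| .
\]

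Finally I would take $\sup_{x\in\Delta_n}$ across this chain, using $\Delta_n\subseteq\Delta_{n-1}$ to bound the right-hand supremum by $\|a\|\sup_{x\in\Delta_{n-1}}\|C_{aT,\p}^f(x,n-1)\|$, extract $n$-th roots, and pass to the limit. Both outer terms tend to $e^{\lambda(aT,\p)}$ (the factor $\|a\|^{1/n}\to1$ and the exponent shift $n-1\mapsto n$ is harmless since the root limit exists), so the squeeze forces the middle term $\lim_n\sup_{x\in\Delta_n}\|C_{a,\alpha}(x,n)\|^{1/n}=r(a\alpha)$ to equal $e^{\lambda(aT,\p)}$. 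The main obstacle is the bookkeeping of Step~2 together with the observation that, although $C_{a,\alpha}(x,n)$ and $C_{aT,\p}^f(x,n)$ are \emph{different} operators, the identity $ST=1$ forces $\|C_{a,\alpha}(x,n)\|=\|R(x)\|$ exactly, so that the extra contraction and the missing isometry affect the norm only by a subexponential factor.
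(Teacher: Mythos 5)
Your proposal is correct and follows essentially the same route as the paper's own proof: the explicit formula $\alpha^k(a)(x)=T_x\cdots T_{\p^{k-1}(x)}\,a(\p^k(x))\,S_{\p^{k-1}(x)}\cdots S_x$, the telescoping via $S_yT_y=1$ to get $C_{a,\alpha}(x,n)=C_{aT,\p}^f(x,n-1)\,a(\p^{n-1}(x))\,S_{\p^{n-2}(x)}\cdots S_x$, and the resulting two-sided estimate $\|C_{aT,\p}^f(x,n)\|\le\|C_{a,\alpha}(x,n)\|\le\|a\|\,\|C_{aT,\p}^f(x,n-1)\|$ (the paper's inequality \eqref{e-coc-coc1}, obtained by the same insert-$ST=1$ trick) followed by the squeeze on $n$-th roots. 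The only cosmetic difference is that you isolate the exact equality $\|C_{a,\alpha}(x,n)\|=\|R(x)\|$, which the paper leaves implicit in its chain of inequalities.
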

\begin{proof} We introduce the following notation: for any sequence of points $x_1,{\dots},x_n\in X$ and
any field of operators $X\ni x\to R_x\in \B(F)$  we put
$
R_{x_1x_2{\dots}x_n}:=R_{x_1}R_{x_2}{\dots}R_{x_n}.
$
Thus 
for every $k \in \N$,  $a \in C(X, \B (F))$ and $x\in \Delta_k$ we have
$$
\alpha^k (a) (x) = T_{x\varphi (x)\dots\varphi^{k-1} (x)}\, a(\varphi^k (x))\, S_{\varphi^{k-1} (x)\dots\varphi (x)x}.
$$
Note that $S_xT_x =1$ for $x\in \Delta$ implies
 $S_{\varphi^{n} (x)\dots \varphi (x)x} T_{x\varphi (x)\dots\varphi^{n} (x)}=1$ for $x\in \Delta_{n+1}$. 
Thus putting 
$C_{a,\alpha}(x,n):=a\cdot \alpha(a)\cdot {\dots} \cdot \alpha^{n-1} (a) (x)$  for   $x\in \Delta_{n}$, 
cf. Remark \ref{rem:spectral_exponents_and_radii},  for  every $x\in \Delta_{n+1}$ we get 
$$
C_{a,\alpha}(x,n+1)=
C_{aT, \varphi} (x,n) a(\varphi^{n} (x))  S_{\varphi^{n-1} (x)\dots \varphi (x)x}.
$$
As $\{S_x, T_x\}_{x\in \Delta}$, are contractions it follows that
$$
\Vert C_{a,\alpha}(x,n+1) \Vert
\le \Vert a\Vert\cdot
\Vert C_{aT, \varphi} (x,n) \Vert.
$$
On the other hand, we have 
\begin{align*}
\Vert C_{aT, \varphi} (x,n+1) \Vert 
&= \Vert  C_{aT, \varphi} (x,n)  
a(\varphi^{n} (x))T_{x\varphi (x)\dots\varphi^{n-1} (x)}\Vert \leq  \Vert  C_{aT, \varphi} (x,n)  
a(\varphi^{n} (x))\Vert
\\
&=\Vert  C_{aT, \varphi} (x,n)  
a(\varphi^{n} (x))   S_{\varphi^{n-1} (x)\dots \varphi (x)x} T_{x\varphi (x)\dots\varphi^{n-1} (x)}\Vert 
\\
&
\leq \Vert  C_{aT, \varphi} (x,n)  
a(\varphi^{n} (x))   S_{\varphi^{n-1} (x)\dots \varphi (x)x}\Vert =\Vert C_{a, \al} (x,n+1) \Vert .
\end{align*}
Combining the above inequalities we get 
\begin{equation}
\label{e-coc-coc1}
\Vert C_{a,\alpha}(x,n+1) \Vert
\le \Vert a\Vert\cdot
\Vert C_{aT, \varphi} (x,n) \Vert  \le \Vert a\Vert  \cdot
\Vert C_{a,\alpha}(x,n)\Vert.
\end{equation}
However,  $r(a\alpha) = \lim_{n\to \infty} \max_{x\in \Delta_n}\Vert C_{a, \alpha} (x,n)\Vert^{\frac{1}{n}}$ 
by Corollary \ref{rem:spectral_exponents_and_radii}.
Thus  inequalities \eqref{e-coc-coc1} give $
r(a\alpha) 
= \lim_{n\to \infty} \sup_{x\in \Delta_n}\Vert C_{aT, \varphi} (x,n)\Vert^{\frac{1}{n}}
$.
\end{proof} 
In general the map $x \to aT(x)=a(x)T_x$ may be far from being continuous, cf.  Example \ref{ex-unmeas-1}  and Remark \ref{rem:cohomological remark}. 
Nevertheless, as we show now,  the triple $(X,\p,aT)$ admits the continuous linear extension in the sense of Definition  \ref{defn:linear_extension}. To this end recall that  $B$ is the unit ball in the dual space $F^*$ equipped with $^*$-weak topology, and $[B]$ is the factor space $B/\sim$ where $[v]=\{w\in B: |v|=|w|\}$ is the equivalence class of $v\in B$.

\begin{lem}\label{lem:continuous_map_into_projective_ball}
For every $a\in A$ and $v\in F^*$,  the map $\Delta\ni x \mapsto [T_x^* a(x)^* v ]\in [B]$ is continuous.
\end{lem}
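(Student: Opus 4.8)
The plan is to reduce the statement to the local continuity of $x\mapsto \lambda_x T_x h$ furnished by Lemma~\ref{lem:on_choice_of_S_x_T_x}(2), exploiting that the quotient $[B]$ forgets precisely the unimodular scalars that obstruct the continuity of $x\mapsto T_x$ itself. Since continuity is a local property, I would fix $x_0\in\Delta$ and produce an open neighbourhood $U\subseteq\Delta$ of $x_0$ on which the map is continuous. By Lemma~\ref{lem:on_choice_of_S_x_T_x}(2) I may shrink $U$ so that there are numbers $\{\lambda_x\}_{x\in U}\subseteq\T$ with $U\ni x\mapsto\lambda_x T_x h\in F$ continuous for every $h\in F$. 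The key observation is that, because $\lambda_x\in\T$, one has $[T_x^* a(x)^* v]=[\lambda_x T_x^* a(x)^* v]$ in $[B]$; thus it suffices to prove that $U\ni x\mapsto \lambda_x T_x^* a(x)^* v$ descends to a continuous map into $[B]$.

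Next I would rewrite $\lambda_x T_x^* a(x)^* v=(a(x)\,\lambda_x T_x)^* v$, using that the Banach-space adjoint is linear and satisfies $(AB)^*=B^*A^*$. For each fixed $h\in F$ the scalar
$$
\big((a(x)\lambda_x T_x)^* v\big)(h)=v\big(a(x)\,\lambda_x T_x h\big)
$$
depends continuously on $x$: indeed $x\mapsto\lambda_x T_x h$ is continuous into $F$ by the choice of $U$, $x\mapsto a(x)$ is norm-continuous into $\B(F)$ since $a\in C(X,\B(F))$, joint continuity of the action $\B(F)\times F\to F$ then gives continuity of $x\mapsto a(x)\,\lambda_x T_x h$, and finally $v\in F^*$ is continuous. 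Hence $x\mapsto (a(x)\lambda_x T_x)^* v$ is continuous into $F^*$ with the $^*$-weak topology.

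To conclude I would compose with the quotient map onto $[B]$, recalling from the proof of Lemma~\ref{lem:projective_ball} that the topology of $[B]$ is the initial topology generated by the maps $[u]\mapsto|u(h)|$, $h\in F$. Consequently only the moduli $|v(a(x)\lambda_x T_x h)|$ matter, and their continuity established above yields continuity of $x\mapsto[\lambda_x T_x^* a(x)^* v]=[T_x^* a(x)^* v]$ on $U$, hence on all of $\Delta$. The one piece of routine bookkeeping left is to confirm, using the normalization convention of Definition~\ref{defn:linear_extension}, that these moduli indeed specify a genuine point of $[B]$ (the case $T_x^*a(x)^*v=0$ giving the class $[0]$).

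The main obstacle is exactly the phenomenon of Example~\ref{ex-unmeas-1} and Remark~\ref{rem:cohomological remark}: the field $x\mapsto T_x$ need not be continuous, or even measurable, and by the cohomological obstruction there may be \emph{no} global continuous choice of phases. This is what forces the argument to take place in $[B]$ rather than in $B$ or $F^*$, and it is what makes Lemma~\ref{lem:on_choice_of_S_x_T_x} indispensable: its part~(2) supplies \emph{local} continuous representatives $\lambda_x T_x$, while passage to the quotient $[B]$ annihilates the locally varying phase $\lambda_x$ that cannot be chosen continuously in the large.
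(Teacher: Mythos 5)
Your proposal is correct and follows essentially the same route as the paper: localize at $x_0$, invoke Lemma~\ref{lem:on_choice_of_S_x_T_x}(2) to obtain phases $\lambda_x\in\T$ making $x\mapsto\lambda_x T_x h$ continuous, observe that the quotient $[B]$ annihilates these phases, and conclude via the subbasic sets of $[B]$ determined by the functions $[u]\mapsto|u(h)|$. The only difference is presentational — you package the final step as weak-$*$ continuity of the phase-corrected map composed with the quotient, where the paper writes out the explicit triangle-inequality estimate with $\delta_1,\delta_2$.
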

\begin{proof}
The topology in $[B]$ is generated by sets 
$$
U_{w,h,\varepsilon}:=\{[u]\in  [B]: \exists_{\lambda \in \T} \,\, |w(h)-\lambda u(h)|<\varepsilon\} 
$$
where  $w\in F^*$, $h\in F$ and $\varepsilon >0$. 

Let us take any $x_0\in\Delta$ and suppose that $[T_{x_0}^*a(x_0)^* v ]\in U_{w,h,\varepsilon}$ for some $w,h,\varepsilon$. That is, there is  $\lambda\in \T$ such that  
$$
| w(h)-\lambda v(a(x_0)T_{x_0}h)|<\varepsilon.
$$
 Let $\delta_1, \delta_2>0$ be arbitrary. Let $V\subseteq \Delta$  be an open neighbourhood of $x_0$ such that 
$\|a(x)-a(x_0)\|< \delta_1$.
 By Lemma \ref{lem:on_choice_of_S_x_T_x}(2) we may find an open  neighbourhood $U$ of $x_0$ contained in $V$   
and numbers $\{\lambda_x\}_{x\in U}\subseteq \T$ such that 
$\| \lambda_xT_xh -T_{x_0}h\|< \delta_2$. Then for every $x\in U$ we have  
\begin{align*}
| w(h)-\lambda \lambda_x v(a(x)T_{x}h)|&\leq | w(h)- \lambda v(a(x_0)T_{x_0}h)|
+ |\lambda v(a(x_0)T_{x_0}h) - \lambda  v(a(x)T_{x_0}h)|  
\\
&\,\,\,\,\,\,\,\, + |\lambda  v(a(x)T_{x_0}h) - \lambda \lambda_x v(a(x)T_{x}h)|
\\
& \leq  | w(h)- \lambda v(a(x_0)T_{x_0}h)| + \delta_1 \|v\| \|T_{x_0}h\| + \delta_2 \|v\| \|a(x)\| 
\\
&< | w(h)- \lambda v(a(x_0)T_{x_0}h)| + \delta_1 \|v\| \|T_{x_0}h\| + \delta_2 \|v\| (\|a(x_0)\|+\delta_1).
\end{align*}
Clearly, we may assume that $v\neq 0$. Then we put 
$$
\delta_2:=\frac{\varepsilon - | w(h)-\lambda  v(a(x_0)T_{x_0}h|}{2  \|v\| (\|a(x_0)\|+\delta_1)}.
$$
If $\|T_{x_0}h\|=0$ this already gives $| w(h)-\lambda \lambda_x v(a(x)T_{x}h)|< \varepsilon$.
If $\|T_{x_0}h\|\neq 0$, then putting 
$$
\delta_1:=\frac{\varepsilon - | w(h)-\lambda  v(a(x_0)T_{x_0}h|}{2  \|v\| \|T_{x_0}h\|}
$$
 we also get $| w(h)-\lambda \lambda_x v(a(x)T_{x}h)|< \varepsilon$.
Thus for every $x\in U$ we get
$[T_{x}^*a(x)^* v ]\in U_{w,h,\varepsilon}$. 
 This gives the assertion.
\end{proof}

\begin{lem}\label{lem:continuous_map_in_norm}
For every $a\in A$ and $v\in F^*$,  the map $\Delta\ni x \mapsto \|T_x^* a(x)^* v \|\in [0,\infty)$ is continuous.
\end{lem}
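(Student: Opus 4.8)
The plan is to reduce the statement to the continuity of a restriction norm of a continuously varying functional onto a continuously varying subspace, thereby bypassing the discontinuity of $x\mapsto T_x$ (cf.\ Example~\ref{ex-unmeas-1} and Remark~\ref{rem:cohomological remark}). The starting observation is that, since each $T_x$ is an isometry of $F$ onto its range $R_x:=T_xF$, it maps the closed unit ball of $F$ onto the closed unit ball of $R_x$; hence for any $w\in F^*$ one has $\|T_x^*w\|=\sup_{\|h\|\le 1}|w(T_xh)|=\sup_{y\in R_x,\,\|y\|\le 1}|w(y)|$, i.e.\ $\|T_x^*w\|$ is the norm of the restriction $w|_{R_x}$. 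Applying this to $w_x:=a(x)^*v$ gives $\|T_x^*a(x)^*v\|=\|w_x|_{R_x}\|$.

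The two pieces of data entering this expression are now manifestly continuous. First, since $a\in C(X,\B(F))$, the map $\Delta\ni x\mapsto w_x=a(x)^*v\in F^*$ is norm continuous, because $\|w_x-w_{x_0}\|\le\|a(x)-a(x_0)\|\,\|v\|$. Second, the relevant subspaces are the ranges of the idempotents $P_x:=\alpha(1)(x)=T_xS_x$ (here I use $S_xT_x=1$, so that $P_x^2=P_x$ and $P_xF=R_x$; the formula $\alpha(1)(x)=T_xS_x$ comes from \eqref{eq:desired_endomorphism_form}), and since $\alpha(1)\in C(X,\B(F))$, the field $x\mapsto P_x$ is norm continuous. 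Thus I would fix $x_0\in\Delta$, set $\eta:=\|P_x-P_{x_0}\|$, and transport near-optimal unit vectors between $R_{x_0}$ and $R_x$: for $y_0\in R_{x_0}$ with $\|y_0\|\le 1$ the vector $P_xy_0\in R_x$ satisfies $\|P_xy_0-y_0\|=\|(P_x-P_{x_0})y_0\|\le\eta$, so after normalisation it is an admissible competitor for $\|w_x|_{R_x}\|$, and conversely. A routine estimate of $|w_x(P_xy_0)-w_{x_0}(y_0)|$ by $\|w_x-w_{x_0}\|(1+\eta)+\|w_{x_0}\|\eta$ then yields, letting $x\to x_0$, both $\liminf_{x\to x_0}\|w_x|_{R_x}\|\ge\|w_{x_0}|_{R_{x_0}}\|$ and the reverse inequality with the roles of $x$ and $x_0$ interchanged, giving continuity at $x_0$.

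The hard part is precisely the one that the above manoeuvre is designed to neutralise: the supremum defining $\|w_x|_{R_x}\|$ is taken over the unit ball of the moving subspace $R_x$, whose only natural parametrisation $T_x$ is in general discontinuous (and, by Lemma~\ref{lem:on_choice_of_S_x_T_x}(2), merely locally continuous up to a unimodular scalar in the strong, not the norm, topology), so a direct estimate would require an unavailable uniformity over $h$. The key point making the argument go through is that the projection $P_x=\alpha_x(1)$ onto $R_x$ \emph{is} norm continuous, and hence furnishes a uniform-over-the-unit-ball comparison between $R_{x_0}$ and nearby $R_x$ that $T_x$ cannot provide. The only degenerate case, $\|w_{x_0}|_{R_{x_0}}\|=0$, requires no normalisation and is settled by the upper estimate alone, which forces $\|w_x|_{R_x}\|\to 0$.
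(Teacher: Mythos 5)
Your proposal is correct and rests on the same key idea as the paper's proof: since $T_x$ is an isometry of $F$ onto the range of the norm-continuous idempotent $\alpha_x(1)=T_xS_x$, the quantity $\|T_x^*a(x)^*v\|$ is just the norm of the restriction of $a(x)^*v$ to $\alpha_x(1)F$, so the discontinuity of $x\mapsto T_x$ never enters. The paper finishes in one line by noting that this restriction norm equals $\|\big(a(x)\alpha_x(1)\big)^*v\|$, which is manifestly continuous in $x$, whereas you rederive the same conclusion by an explicit two-sided estimate transporting near-optimal unit vectors via $P_x$; both are valid.
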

\begin{proof}
Recall that $T_x$ is an isometry from $F$ onto $\alpha_x(1)F$ and $\alpha_x(1)$ is a norm one projection. Hence 
$$
 \|T_x^* a(x)^* v \|=\sup_{\|h\|=1} \|v\big(a(x)T_x h\big) \|
=\sup_{\|h\|=1} \|v\big(a(x)\alpha_x(1) h\big) \|=\|\big(a(x)\alpha_x(1)\big)^* v \|.
$$
Thus the assertion follows from the continuity of  
$\Delta\ni x \mapsto a(x)\alpha_x(1)\in \B(F)$.
\end{proof}

\begin{thm}
\label{thm:varp-principle-TS} 
Let $A = C(X, \B(F))$ where $F$ is a Banach space  and suppose that   $\alpha : A\to A$ is an endomorphism of $A$ such that the generated field of endomorphisms $\{\alpha_x\}_{x\in \Delta}$ consists of isometric inner endomorphism of $\B(F)$. Then there is a dual partial dynamical system $(X,\p)$ and a family of isometries  $\{T_x\}_{x\in \Delta}\subseteq \B(F)$ satisfying 
$T_xb=\alpha_x(b)T_x$ for $b\in \B(F)$. For any $a\in A$ the map $\Delta\ni x \to aT(x):=a(x)T_x$ admits the linear extension  in the sense of Definition  \ref{defn:linear_extension}. In particular, fixing \(a\in A\) and defining
\begin{enumerate}
\item $\X:=X\times [S]$ where $[S]$ is a projective space of the dual Banach space $F^*$,
\item $
\TDelta:= \{(x,[v])\in \widetilde{X}: \, x\in \Delta,\,\, T^{*}_x a^{*}(x)v  \neq 0\},
$ and 
$$
\tilde{\p} (x,[v]):= \left(\varphi (x), \left[\frac{T^*_{x}a^*(x)v}{\| T^*_{x}a^*(x)v\|}\right]\right), \quad \tilde{a}(x,[v]):= 
\| T^*_{x}a^*(x)v\|, \quad  (x,v) \in \TDelta,
$$
\end{enumerate}
we get
$$
r(a\alpha)  = \max_{\mu \in {\rm Inv}\left(\TDelta_\infty,\tilde{\varphi}\right)} \, \exp \int_{\TDelta_\infty} 
\ln \tilde{a}(x, [v]) \,\, d\mu = \ \max_{\mu \in {\rm Erg}\left(\TDelta_\infty,\tilde{\varphi}\right)} \, \exp \int_{\TDelta_\infty} 
\ln \tilde{a}(x, [v]) \,\, d\mu
$$
where $\TDelta_\infty$ is the essential domain of $\tilde{\varphi}$.
\end{thm}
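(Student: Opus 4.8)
The plan is to reduce $r(a\alpha)$ to the spectral exponent of an explicit (though generally discontinuous) operator-valued weight, and then to quote the abstract variational principle of Theorem~\ref{thm:varp-principle_for_cocycles}. First I would record the structural input. By Proposition~\ref{zapowiedziane twierdzenie} together with Propositions~\ref{prop:on_inner_endo} and~\ref{prop:inner_injective_endo}, the hypotheses force $\alpha$ into the form~\eqref{eq:desired_endomorphism_form}, and moreover the implementing operators can be chosen so that each $T_x$ is an isometry with $S_xT_x=1$ and $T_xb=\alpha_x(b)T_x$ for all $b\in\B(F)$. This pins down the family $\{T_x\}_{x\in\Delta}$ and hence the weight $aT\colon\Delta\ni x\mapsto a(x)T_x\in\B(F)$ attached to a fixed $a\in A$.

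Second, I would invoke Proposition~\ref{prop:coc-aT}, which identifies $r(a\alpha)=e^{\lambda(aT,\varphi)}$; that is, the spectral radius of the weighted endomorphism is the exponential of the spectral exponent of $aT$ over $(X,\varphi)$. At this point the problem is entirely about computing $\lambda(aT,\varphi)$, and Theorem~\ref{thm:varp-principle_for_cocycles} already expresses such an exponent variationally, \emph{provided} the triple $(X,\varphi,aT)$ admits a continuous linear extension in the sense of Definition~\ref{defn:linear_extension}. So the whole theorem comes down to producing that extension and then reading off the formula.

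Third, and this is the crux, I would verify that $(X,\varphi,aT)$ admits the continuous linear extension over $X\times[B]$, with $(aT)^*(x)=T_x^*a(x)^*$, and then transfer to $\X=X\times[S]$ using Remark~\ref{relation-erg-Terg} (all invariant and ergodic measures concentrate on $\Delta_\infty\times[S]$). Concretely, openness of $\TDelta$ follows once $\tilde a(x,[v])=\|T_x^*a(x)^*v\|$ is shown continuous, so that $\{\tilde a>0\}$ is open; the first coordinate of $\tilde\varphi$ is the continuous map $\varphi$; and the second coordinate $[\,T_x^*a(x)^*v/\|T_x^*a(x)^*v\|\,]$ together with the scalar $\tilde a$ are precisely the objects handled by Lemmas~\ref{lem:continuous_map_into_projective_ball} and~\ref{lem:continuous_map_in_norm}. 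With these continuities in place, Theorem~\ref{thm:varp-principle_for_cocycles} applied to $(X,\varphi,aT)$ gives $\lambda(aT,\varphi)=\max_{\mu}\int_{\TDelta_\infty}\ln\tilde a\,d\mu$ over the invariant and over the ergodic measures alike; exponentiating and using Step two yields the claimed identity.

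The main obstacle is Step three, the continuity of the extension. The genuine difficulty is that $x\mapsto a(x)T_x$ is typically \emph{discontinuous}: the implementing isometries $T_x$ are only determined up to a phase in $\T$, and by the cohomological obstruction of Remark~\ref{rem:cohomological remark} no globally continuous choice need exist. The passage to the quotient ball $[B]$ (and its sphere $[S]$) is engineered exactly to kill these phases, and Lemma~\ref{lem:on_choice_of_S_x_T_x}(2) provides the local trivialization $x\mapsto\lambda_xT_x$ that restores continuity after quotienting. The delicate point I would treat carefully is joint continuity in $(x,[v])$: one combines the continuity in $x$ (for each fixed $v$) supplied by Lemmas~\ref{lem:continuous_map_into_projective_ball} and~\ref{lem:continuous_map_in_norm} with the norm bound $\|T_x\|=1$ and the product structure of $X\times[B]$, the local continuity of $x\mapsto\lambda_xT_xh$ ensuring that the estimates can be made uniform enough to pass to the limit in both variables simultaneously.
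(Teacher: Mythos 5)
Your proposal follows the paper's own proof essentially verbatim: reduce $r(a\alpha)$ to $e^{\lambda(aT,\varphi)}$ via Proposition~\ref{prop:coc-aT}, establish the continuous linear extension over $X\times[B]$ from Lemmas~\ref{lem:continuous_map_into_projective_ball} and~\ref{lem:continuous_map_in_norm}, apply Theorem~\ref{thm:varp-principle_for_cocycles}, and pass to $X\times[S]$ by Remark~\ref{relation-erg-Terg}. Your identification of the phase ambiguity in $\{T_x\}$ and the resulting need to work in the quotient $[B]$ as the crux of the argument is exactly the point the paper's construction is designed to handle.
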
 
\begin{proof} Let us fix $a\in A=C(X,\B(F))$ and define the triple $(\X,\tilde{\varphi}, \tilde{a})$ as in the assertion, but with $\X$ defined as $\widetilde{X}:= X\times {[B]}$ where $B$ is the unit ball in the dual space $F^*$. 
By Lemma \ref{lem:continuous_map_into_projective_ball}, the set  
$
\TDelta:= \{(x,[v])\in \widetilde{X}: \, x\in \Delta,\,\, T^{*}_x a^{*}(x)v  \neq 0\}
$ is open in $\widetilde{X}$.  In view of Lemmas \ref{lem:continuous_map_into_projective_ball} and \ref{lem:continuous_map_in_norm}  the  maps $\tilde{\varphi}:\TDelta \to \widetilde{X}$  and $\tilde{a}: \TDelta\to [0,+\infty)$
are  continuous.  Hence  $(\X,\tilde{\varphi}, \tilde{a})$ is the continuous linear extension of $(X,\p,aT)$. 
 By Proposition \ref{prop:coc-aT}, $r(a\alpha) 
=  e^{\lambda(aT,\varphi)}$ where $aT(x)=a(x)T_x$, $x\in \Delta$. Thus applying Theorem \ref{thm:varp-principle_for_cocycles} gives $\lambda(aT,\varphi)=\max_{\mu \in {\rm Inv}\left(\TDelta_\infty,\tilde{\varphi}\right)} \, \int_{\TDelta_\infty} 
\ln \tilde{a}(x, [v]) \,\, d\mu = \ \max_{\mu \in {\rm Erg}\left(\TDelta_\infty,\tilde{\varphi}\right)} \,  \int_{\TDelta_\infty} 
\ln \tilde{a}(x, [v]) \,\, d\mu$. In view of Remark \ref{relation-erg-Terg}, essential domains for the systems
$(X\times {[B]},\tilde{\varphi})$ and $(X\times {[S]},\tilde{\varphi})$ are the same. This gives the assertion.
\end{proof}

Now we are ready to describe relationships between  Lyapunov exponents and spectral radius of weighted endomorphisms of $C(X, \B(H))$. The forthcoming result generalizes   variational principle of Latushkin and Stepin 
\cite{LatSt1}, \cite{LatSt}, established in the case where $F=H$ is a Hilbert space, $a$ takes values in compact operators, $\p$ is a homeomorphism and $\alpha_x=id$.

\begin{thm}
 \label{spectral radius main theorem} Let $A = C(X, \B(F))$ where $F$ is a Banach space  and suppose that   $\alpha : A\to A$ is an endomorphism of $A$ such that the generated field of endomorphisms $\{\alpha_x\}_{x\in \Delta}$ consists of isometric inner endomorphism of $\B(F)$.  For every $a\in A$ we have
\begin{equation}
\label{e-Lyap-1}
 \ln r(a\alpha)=\max_{\mu \in {\rm Erg} (\Delta_\infty,\p)}\, \lambda_\mu({aT,\p})
\end{equation}
where $(X,\p)$ is the partial dynamical system dual to $\alpha$, and $aT(x):=a(x)T_x$ for $x\in \Delta$, where $\{T_x\}_{x\in \Delta}\subseteq \B(F)$ a family of isometries such that 
$T_xb=\alpha_x(b)T_x$ for $b\in \B(F)$.  Moreover, for any set $\Omega\subseteq \Delta_\infty$   such that $\mu(\Omega)>0$ for every $\mu \in {\rm Erg} (\Delta_\infty,\p)$, we have
$$
\ln r(a\alpha)= \max_{(x,v) \in \Omega\times S}\lambda_x(C_{T^*a^*,\p}^b,v) 
$$
where $S$ is the unit sphere in $F^*$, and $\Delta \ni x \to T^*a^*(x):=T_x^*a^*(x)\in \B(F^*)$. 
For a fixed $a\in A$, 
the set $\Delta_\infty$ above can be replaced by the essential domain
of  $\varphi:\{x\in \Delta: a(x)\neq 0\}\to X$. 
\end{thm}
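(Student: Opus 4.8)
The plan is to reduce everything to the abstract variational principles for cocycles proved in Section~\ref{varp-gen-delta}, applied to the (generally discontinuous) operator-valued function $aT:\Delta\ni x\to a(x)T_x\in\B(F)$. Two ingredients are already in place. First, Proposition~\ref{prop:coc-aT} identifies the quantity of interest with a spectral exponent, namely $\ln r(a\alpha)=\lambda(aT,\p)$, where $\{T_x\}_{x\in\Delta}$ are the isometries furnished by Proposition~\ref{zapowiedziane twierdzenie} together with Proposition~\ref{prop:on_inner_endo}, satisfying $T_xb=\alpha_x(b)T_x$. Second, Theorem~\ref{thm:varp-principle-TS} shows that the triple $(X,\p,aT)$ admits a continuous linear extension in the sense of Definition~\ref{defn:linear_extension}, even though $x\to a(x)T_x$ may itself fail to be continuous or even measurable. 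These two observations place us squarely within the hypotheses of Theorem~\ref{thm:variational_principle_Lyapunov} and Corollary~\ref{cor:variational_principle_Lyapunov}, with the weight $a$ appearing there replaced by $aT$ here.

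The first formula then follows immediately: applying Corollary~\ref{cor:variational_principle_Lyapunov} to $aT$ gives $\lambda(aT,\p)=\max_{\mu\in{\rm Erg}(\Delta_\infty,\p)}\lambda_\mu(aT,\p)$, and combining this with $\ln r(a\alpha)=\lambda(aT,\p)$ yields \eqref{e-Lyap-1}; the measure exponents $\lambda_\mu(aT,\p)$ are well defined via Lemma~\ref{prop:Lyapunov_from_measures}, whose boundedness and measurability hypotheses are supplied by the continuous linear extension (the norms of the cocycle products being expressible through the continuous data $\tilde{a}$, $\tilde{\p}$). For the second formula I would invoke Theorem~\ref{thm:variational_principle_Lyapunov}, again for $aT$, which gives $\lambda(aT,\p)=\max_{(x,v)\in\Omega\times S}\lambda_x(C^b_{(aT)^*,\p},v)$ for any $\Omega\subseteq\Delta_\infty$ with $\mu(\Omega)>0$ for every $\mu\in{\rm Erg}(\Delta_\infty,\p)$. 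The only real point to verify is the bookkeeping of adjoints: since $(aT)(x)=a(x)T_x$ we have $(aT)^*(x)=T_x^*a(x)^*=T^*a^*(x)$, so the relevant backward cocycle is precisely $C^b_{T^*a^*,\p}$, and the formula assumes the stated shape.

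Finally, for the replacement of $\Delta_\infty$ by the essential domain of $\p$ restricted to $\{x\in\Delta:a(x)\neq 0\}$, I note that Theorem~\ref{thm:variational_principle_Lyapunov} and Corollary~\ref{cor:variational_principle_Lyapunov} in fact deliver the stronger replacement by the essential domain $\Delta^{aT}_\infty$ of $\p$ on $\Delta^{aT}:=\{x\in\Delta:a(x)T_x\neq 0\}$. Since each $T_x$ is an isometry, $a(x)T_x=0$ only forces $a(x)$ to vanish on the range $\alpha_x(1)F$, so $\Delta^{aT}\subseteq\Delta^a:=\{x\in\Delta:a(x)\neq 0\}$ and hence $\Delta^{aT}_\infty\subseteq\Delta^a_\infty\subseteq\Delta_\infty$. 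As the maximum over the smallest of these three sets already equals $\ln r(a\alpha)$, the same value is attained over the intermediate set $\Delta^a_\infty$ by a sandwiching argument.

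I expect the genuine difficulty to reside not in this theorem but in its prerequisites, which are already discharged: constructing the continuous linear extension of the possibly discontinuous $aT$ (Theorem~\ref{thm:varp-principle-TS}, resting on the local cohomological control of $x\to T_x$ in Lemma~\ref{lem:on_choice_of_S_x_T_x}) and the two-sided cocycle comparison underlying Proposition~\ref{prop:coc-aT}. Granting those, the remaining care is almost entirely notational: tracking which cocycle occurs (forward versus backward, $aT$ versus its adjoint $T^*a^*$) and ensuring the essential-domain reduction uses the correct inclusion $\Delta^{aT}_\infty\subseteq\Delta^a_\infty$.
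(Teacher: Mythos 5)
Your proposal is correct and follows essentially the same route as the paper: show that $(X,\p,aT)$ admits a continuous linear extension and then apply Theorem~\ref{thm:variational_principle_Lyapunov} and Corollary~\ref{cor:variational_principle_Lyapunov} to the weight $aT$, using Proposition~\ref{prop:coc-aT} to identify $\ln r(a\alpha)$ with $\lambda(aT,\p)$. You are in fact somewhat more explicit than the paper's two-line proof in tracking the adjoint $(aT)^*=T^*a^*$ and in reconciling the essential domain $\Delta^{aT}_\infty$ delivered by the abstract theorem with the stated $\Delta^a_\infty$ via the inclusion $\Delta^{aT}_\infty\subseteq\Delta^a_\infty\subseteq\Delta_\infty$.
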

\begin{proof} By Lemmas \ref{lem:continuous_map_into_projective_ball} and \ref{lem:continuous_map_in_norm}, 
$(X,\p,aT)$ admits  a continuous linear extension. Hence we get the assertion by   Theorem \ref{thm:variational_principle_Lyapunov} and Corollary~\ref{cor:variational_principle_Lyapunov}. 
\end{proof}
\begin{cor}
\label{c-max-coc}
Retain the notation and assumptions of Theorem \ref{spectral radius main theorem}.  In particular, let $\Omega\subseteq \Delta_\infty$ be  such that $\mu(\Omega)>0$ for every $\mu \in {\rm Erg} (\Delta_\infty,\p)$. Then
\begin{align*}
r(a\alpha)= \max_{x \in \Omega} \lim_{n\to\infty}\Vert C_{aT, \varphi}^f (x,n) \Vert^{\frac{1}{n}}
= \max_{x \in \Omega} \lim_{n\to\infty} \Vert C_{a,\alpha}(x,n) \Vert^{\frac{1}{n}},
\end{align*}
where $C_{a,\alpha}(x,n):=a \cdot \alpha(a) \cdot{\dots}  \cdot\alpha^{n-1} (a) (x)$, cf. Remark \ref{rem:spectral_exponents_and_radii}.
\end{cor}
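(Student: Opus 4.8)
The plan is to deduce the corollary directly from Proposition~\ref{prop:coc-aT} together with Theorem~\ref{thm:variational_principle_Lyapunov} applied to the (possibly discontinuous) function $aT$, the only real work being to upgrade the $\liminf$ hidden in the Lyapunov exponents to a genuine limit. First I would recall that, by Lemmas~\ref{lem:continuous_map_into_projective_ball} and \ref{lem:continuous_map_in_norm}, the triple $(X,\p,aT)$ admits a continuous linear extension, so Theorem~\ref{thm:variational_principle_Lyapunov} is available for the bounded function $aT$. Its middle equality in \eqref{eq:Lyapunov_essential} reads $\lambda(aT,\p)=\max_{x\in\Omega}\lambda_x(C_{aT,\p}^f)$, and Proposition~\ref{prop:coc-aT} identifies the left-hand side as $\ln r(a\alpha)$. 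Since $\lambda_x(C_{aT,\p}^f)=\liminf_{n\to\infty}\frac1n\ln\|C_{aT,\p}^f(x,n)\|$ and the maximum is attained, there is a point $x_0\in\Omega$ with $\liminf_{n\to\infty}\frac1n\ln\|C_{aT,\p}^f(x_0,n)\|=\ln r(a\alpha)$.

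Next I would supply the matching upper bound, valid at every point. For fixed $x\in\Delta_\infty\supseteq\Omega$ one has $\frac1n\ln\|C_{aT,\p}^f(x,n)\|\le\sup_{x'\in\Delta_n}\frac1n\ln\|C_{aT,\p}^f(x',n)\|$, and the right-hand side converges to $\lambda(aT,\p)=\ln r(a\alpha)$ by subadditivity (Definition~\ref{def:spectral_exponent}); hence $\limsup_{n\to\infty}\frac1n\ln\|C_{aT,\p}^f(x,n)\|\le\ln r(a\alpha)$ for all $x\in\Omega$. At the point $x_0$ this forces $\liminf=\limsup=\ln r(a\alpha)$, so $\lim_{n\to\infty}\|C_{aT,\p}^f(x_0,n)\|^{1/n}=r(a\alpha)$ exists, while at every other $x\in\Omega$ the value of the limit (when it exists) is $\le r(a\alpha)$. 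This yields the first claimed equality, the maximum being realized at $x_0$.

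Finally, to pass from the forward cocycle $C_{aT,\p}^f$ to $C_{a,\alpha}$ I would reuse the two-sided estimate already established inside the proof of Proposition~\ref{prop:coc-aT}, namely \eqref{e-coc-coc1}, which I read as $\|C_{aT,\p}^f(x,n)\|\le\|C_{a,\alpha}(x,n)\|\le\|a\|\cdot\|C_{aT,\p}^f(x,n-1)\|$. Taking $n$-th roots and letting $n\to\infty$, a squeeze argument shows that whenever $\lim_{n\to\infty}\|C_{aT,\p}^f(x,n)\|^{1/n}$ exists the limit $\lim_{n\to\infty}\|C_{a,\alpha}(x,n)\|^{1/n}$ exists as well and the two agree; applied at $x_0$ this gives the second equality.

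I expect the only genuine obstacle to be the interplay in the first two steps: Theorem~\ref{thm:variational_principle_Lyapunov} only delivers a $\liminf$, whereas the statement asserts honest limits, and the resolution is precisely the universal $\limsup$ bound coming from subadditivity of $n\mapsto\sup_{x'\in\Delta_n}\ln\|C_{aT,\p}^f(x',n)\|$, which pins $\liminf$ and $\limsup$ together at the maximizing point. Everything else is elementary manipulation of the cocycle norms.
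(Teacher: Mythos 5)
Your argument is correct and follows essentially the same route as the paper: the first equality is obtained from Proposition~\ref{prop:coc-aT} together with the middle equality of \eqref{eq:Lyapunov_essential} in Theorem~\ref{thm:variational_principle_Lyapunov} (which is exactly what underlies Theorem~\ref{spectral radius main theorem}), and the second from the two-sided estimate \eqref{e-coc-coc1}. The only addition is your explicit upgrade of the $\liminf$ to a genuine limit at the maximizing point via the subadditive upper bound $\limsup_n \frac1n\ln\Vert C_{aT,\varphi}^f(x,n)\Vert\le\lambda(aT,\varphi)$ --- a detail the paper leaves implicit but which you supply correctly.
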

\begin{proof}
The first equality follows from Theorem \ref{spectral radius main theorem} and the second from \eqref{e-coc-coc1}.
\end{proof}

\begin{rem}
\label{finite-VP} If $F$ is finite dimensional, then every endomorphism $\alpha : C(X, \B(F))\to C(X, \B(F))$ is of the form  \eqref{eq:desired_endomorphism_form}, by  Corollary~\ref{cor-M-n}. 
Thus assumptions of Theorems~\ref{thm:varp-principle-TS}, \ref{spectral radius main theorem} 
are satisfied whenever the induced endomorphisms $\{\alpha_x\}_{x\in \Delta}$ are  isometric (they are necessarily automorphisms).
In particular, if $F=H\cong \C^n$ is a finite dimensional Hilbert space, then \emph{every $*$-endomorphism} $\alpha : C(X, \B(H))\to C(X, \B(H))$ satisfies the assumptions of Theorems~\ref{thm:varp-principle-TS}, \ref{spectral radius main theorem}. 
Moreover, in the Hilbert space  case we have $H^*\cong H$ and thus the above results can be phrased without passing to dual spaces.
\end{rem}

\subsection{Variational principle in the case $A=C(X,D)$}
\label{varp-gen}

In this final subsection we consider the  case when $A=C(X,D)$ and $\{\alpha_x\}_{x\in \Delta}$ are arbitrary endomorphisms of $D$. We show that by an adequate choice of a cocycle with values in $\B(D)$ 
we may reduce the general problem to the situation already treated in  previous sections. 

To this end, let  $\alpha$ be a contractive endomorphism of $C(X,D)$ that generates a partial dynamical system $(X,\p)$. Thus  $\alpha$ is given by formula \eqref{form of endomorphism}
where $\{\alpha_x\}_{x\in \Delta}$ is a continuous field of endomorphisms of $D$.
We also assume that $D$ is a unital  Banach algebra (the unit $1\in D$ has norm one). 
This allows us to treat $D$ as a subalgebra of $\B(D)$. Namely, we have an isometric homomorphism 
$D\ni d \longmapsto \overline{d} \in \B (D)$ where $\overline{d}v := dv$,  for $v\in D$. 
We may extend this embedding to an isometric homomorphism 
$C(X,D)\ni b \longmapsto \overline{b} \in C(X,\B (D))$ where $\overline{b}(x):=\overline{b(x)}$,  for $x\in X$.
Moreover,  each $\alpha_x:D\to D$, $x\in \Delta$,
is contractive and in particular $\alpha_x\in \B (D)$. 
In fact, we may treat  $\{\alpha_x\}_{x\in \Delta}$ as an element $\overline{\alpha}\in C(X,\B(D))$ where
$$
\overline{\alpha}(x):=
\begin{cases}
\alpha_x,& x \in \Delta,\\
0,& x\notin \Delta,
\end{cases}
$$
 We  denote by $T_\p:C(X,\B(D))\to C(X,\B(D))$  an endomorphism associated with  $(X,\p)$:
$$
 T_\varphi(b)(x):=
\begin{cases}
b(\p(x)),& x \in \Delta,\\
0,& x\notin \Delta,
\end{cases}\qquad b\in C(X,\B(D)).
$$ 
Since $\Delta$ is clopen for any $a\in D$  the map $\Delta \ni x \to \alpha_x \overline{a}(\p(x))\in \B(D)$, 
which we denote by  $\overline{\alpha}\cdot \overline{a}\circ \varphi$, can be treated as an element of $C(X,\B(D))$. Formally,  $\overline{\alpha}\cdot \overline{a}\circ \varphi=\overline{\alpha}T_\p(\overline{a})$.
\begin{prop}
\label{prop:spectral_radii_arbitrary_to_inner} 
With the above notation for every $a\in  C(X,D)$ we have 
$$
\ln r(a\alpha)=\ln r(\overline{\alpha}T_\p(\overline{a}) T_\p)=\lambda(\overline{\alpha}\cdot \overline{a}\circ \varphi,\varphi).
$$
That is, the spectral radii of weighted endomorphisms $a\alpha:C(X,D)\to C(X,D)$ and $\overline{\alpha}T_\p(\overline{a}) T_\p:C(X,\B(D))\to C(X,\B(D))$ do coincide, and their logarithms are equal to 
the spectral exponent of the function $\Delta \ni x \to \alpha_x \overline{a}(\p(x))\in \B(D)$ with respect to $(X,\p)$.
\end{prop}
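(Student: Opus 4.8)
The plan is to prove the two equalities separately: the right-hand one is essentially a bookkeeping consequence of earlier results, while the left-hand one requires a genuine comparison of cocycles. First I would dispose of the equality $\ln r(\overline{\alpha}T_\varphi(\overline{a})T_\varphi)=\lambda(\overline{\alpha}\cdot\overline{a}\circ\varphi,\varphi)$. Here the weight is $w:=\overline{\alpha}T_\varphi(\overline{a})\in C(X,\B(D))$, which by the computation preceding the statement satisfies $w(x)=\alpha_x\circ\overline{a(\varphi(x))}$, i.e. $w=\overline{\alpha}\cdot\overline{a}\circ\varphi$, and the endomorphism is $T_\varphi$. Since $T_\varphi$ generates the partial system $(X,\varphi)$ with the trivial field of endomorphisms (each fibre map is the identity of $\B(D)$), Remark~\ref{rem:spectral_exponents_and_radii} applies verbatim with $\B(D)$ in the role of $D\subseteq\B(F)$ and $F:=D$. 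It gives that the cocycle $C_{w,T_\varphi}$ coincides with the forward cocycle $C^f_{w,\varphi}$ and that $\ln r(wT_\varphi)=\lambda(w,\varphi)$, which is exactly the right-hand equality.

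The heart of the matter is the left-hand equality $r(a\alpha)=r(wT_\varphi)$. By Corollary~\ref{cor:spectral_radius_contractive_endomor} (in the form of Remark~\ref{rem:spectral_exponents_and_radii}) the two numbers are the exponential growth rates of $p_n:=\sup_{x\in\Delta_n}\|C_{a,\alpha}(x,n)\|_D$ and $\rho_n:=\sup_{x\in\Delta_n}\|C^f_{w,\varphi}(x,n)\|_{\B(D)}$, respectively. Writing $P_n(x):=C_{a,\alpha}(x,n)=a\cdot\alpha(a)\cdots\alpha^{n-1}(a)(x)$ and $\beta^x_n:=\alpha_x\alpha_{\varphi(x)}\cdots\alpha_{\varphi^{n-1}(x)}$, the key step I would prove by induction on $n$ is the factorisation
$$
C^f_{w,\varphi}(x,n)=\overline{\alpha_x\big(P_n(\varphi(x))\big)}\circ\beta^x_n ,\qquad x\in\Delta_n .
$$
The induction uses the recursion $C^f_{w,\varphi}(x,n)=w(x)\,C^f_{w,\varphi}(\varphi(x),n-1)$ together with the fact that each $\alpha_x$ is a homomorphism, which lets one commute a left multiplication through a fibre endomorphism via $\alpha_x\circ\overline{d}=\overline{\alpha_x(d)}\circ\alpha_x$; pushing all the fibre maps to the right and collapsing the accumulated left multiplications into $\overline{\alpha_x(P_n(\varphi(x)))}$ (using $P_n(\varphi(x))=a(\varphi(x))\,\alpha_{\varphi(x)}(P_{n-1}(\varphi^2(x)))$) yields the displayed identity.

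From the factorisation the two required estimates follow at once. Since $\beta^x_n$ is a composition of contractive endomorphisms it is contractive, and since $d\mapsto\overline{d}$ is isometric we get $\|C^f_{w,\varphi}(x,n)\|_{\B(D)}\le\|\alpha_x(P_n(\varphi(x)))\|\le\|P_n(\varphi(x))\|$, whence $\rho_n\le p_n$ and therefore $e^{\lambda(w,\varphi)}\le r(a\alpha)$. Conversely, evaluating at the unit gives $C^f_{w,\varphi}(x,n)(1)=\alpha_x(P_n(\varphi(x)))$, so that $P_{n+1}(x)=a(x)\,\alpha_x(P_n(\varphi(x)))=a(x)\,C^f_{w,\varphi}(x,n)(1)$ and hence $\|P_{n+1}(x)\|\le\|a\|\,\|C^f_{w,\varphi}(x,n)\|_{\B(D)}$, giving $p_{n+1}\le\|a\|\,\rho_n$ and $r(a\alpha)\le e^{\lambda(w,\varphi)}$. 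Combining the two inequalities yields $r(a\alpha)=e^{\lambda(w,\varphi)}=r(wT_\varphi)$.

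I expect the main obstacle to be precisely the control of the operator norm $\|C^f_{w,\varphi}(x,n)\|_{\B(D)}$, a supremum over the whole unit ball of $D$, by the element norm $\|P_n\|_D$, which only records the value of the cocycle applied to $1$. A naive peeling-off of factors reproduces only the circular bound $\rho_n\le\|a\|\,\rho_{n-1}$, which carries no information about the rate. The factorisation resolves this: multiplicativity of the $\alpha_x$ lets the \emph{variable} part $\beta^x_n(V)$ factor out as a single contraction, so that no gap between the two norms can accumulate across the $n$ steps. A minor point I would also address is that $\overline{\alpha}$ need only be a bounded field for $\lambda(\overline{\alpha}\cdot\overline{a}\circ\varphi,\varphi)$ to be defined; indeed only boundedness of the function $\Delta\ni x\to\alpha_x\,\overline{a}(\varphi(x))$ enters the cocycle estimates above.
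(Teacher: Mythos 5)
Your argument is correct, but it proves the crucial equality $r(a\alpha)=r(\overline{\alpha}T_\p(\overline{a})T_\p)$ by a genuinely different mechanism than the paper. The paper first replaces $a\alpha$ by $\alpha a=\alpha(a)\alpha$ (legitimate by Corollary~\ref{cor:spectral_radius_contractive_endomor}) and then computes $\|(\alpha a)^n\|_{\B(C(X,D))}$ head-on: unwinding $(\alpha a)^n b$ at a point $x\in\Delta_n$ shows the result depends on $b$ only through $b(\varphi^n(x))$, so the supremum over the unit ball of $C(X,D)$ can be exchanged for the supremum over the unit ball of $D$, yielding the \emph{exact} identity $\|(\alpha a)^n\|_{\B(C(X,D))}=\sup_{x\in\Delta_n}\|C^f_{w,\varphi}(x,n)\|_{\B(D)}$ for every $n$. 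You instead keep the weight $a$, prove the factorisation $C^f_{w,\varphi}(x,n)=\overline{\alpha_x(P_n(\varphi(x)))}\circ\beta^x_n$ with $\beta^x_n$ a contraction, and squeeze the two growth rates between the bounds $\rho_n\le \|P_n\|$ and $p_{n+1}\le\|a\|\rho_n$. Both hinge on multiplicativity of the $\alpha_x$; the paper's route gives a cleaner exact norm identity, while your factorisation is more structural (it exhibits the $\B(D)$-valued cocycle as left multiplication by the $D$-valued cocycle twisted by a contraction, exactly mirroring the sandwiching trick of Proposition~\ref{prop:coc-aT}) at the cost of the harmless constant $\|a\|$ and some index bookkeeping ($P_{n+1}$ and $C^f_{w,\varphi}(\cdot,n)$ live on $\Delta_n$, which does not affect the $n$-th roots). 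One small caveat: when you "evaluate at the unit", do not read off $C^f_{w,\varphi}(x,n)(1)=\alpha_x(P_n(\varphi(x)))$ from the factorisation by pretending $\beta^x_n(1)=1$ --- the $\alpha_x$ need not be unital; the identity is nonetheless true, either by directly unwinding the composition applied to $1$, or by noting that $\beta^x_n(1)$ is absorbed since the last factor of $\alpha_x(P_n(\varphi(x)))$ is $\beta^x_n(a(\varphi^n(x)))$ and $\beta^x_n(d)\beta^x_n(1)=\beta^x_n(d)$.
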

\begin{proof} We have $\ln r(\overline{\alpha}T_\p(\overline{a}) T_\p)= \lambda(\overline{\alpha}\cdot \overline{a}\circ \varphi,\varphi)$ by Remark \ref{rem:spectral_exponents_and_radii}.
By Corollary \ref{cor:spectral_radius_contractive_endomor}, $r(a\alpha)=r(\alpha a)=\lim_{n\to \infty}\| (\alpha a)^n\|^{1/n}$
 where we treat $\alpha a=\alpha(a)\alpha$  as a weighted endomorphism of $C(X,D)$.  By the same corollary we have 
$r(\overline{\alpha}T_\p(\overline{a}) T_\p)
=\lim_{n\to \infty}\|\overline{\alpha}T_\p(\overline{a})\cdot {\dots} \cdot T_\p^{n-1}\Big( \overline{\alpha}T_\p(\overline{a})\Big)\|^{1/n}$. 
Thus it suffices to show that,  for every  $n \in \N$, 
$$
\| (\alpha a)^n\|_{\B (C(X,D))} = \|\overline{\alpha}T_\p(\overline{a})\cdot {\dots} \cdot T_\p^{n-1}\Big( \overline{\alpha}T_\p(\overline{a})\Big)\|_{C(X,\B(D))}.
$$
To this end, we note that for  each $x \in \Delta$ the operator $\alpha_x \overline{a(\varphi(x))}\in \B(D)$ acts according to the formula $
 [\alpha_x \overline{a(\varphi(x))}]d = \alpha_x \big(a(\varphi(x))d\big)$ for $d\in D$. 
Having this in mind  we obtain
\begin{align*}
\| (\alpha a)^n\| &=  \sup_{b\in A, \|b\|=1} \| (\alpha a) \cdots(\alpha a)(\alpha a) b\|_{A}
\\
&= \sup_{b\in A, \|b\|=1} \sup_{x\in \Delta_n} 
\| \alpha_x \Big( a(\varphi (x)) \alpha_{\varphi(x)} \big(a(\varphi^2 (x))
\cdots  \alpha_{\varphi^{n-1}} ( a(\varphi^n (x))
b(\varphi^n (x)))\cdots \big)\Big)  \|_{D}  
\\
&=  \sup_{x\in \Delta_n}  \sup_{d\in D, \|d\|=1}
\| \alpha_x \Big( a(\varphi (x)) \alpha_{\varphi(x)} \big(a(\varphi^2 (x))
 \cdots   \alpha_{\varphi^{n-1}} ( a(\varphi^n (x))
d)\cdots \big)\Big)  \|_{D}  
\\
&=  \sup_{x\in \Delta_n}  \sup_{d\in D, \|d\|=1}
\|  \Big(\alpha_x \overline{a(\varphi(x))}\cdots   \alpha_{\varphi(x)} \overline{a(\varphi^2(x))}  \cdots\alpha_{\varphi^{n-1}(x)} \overline{a(\varphi^{n} (x))}\Big) d  \|_{D}  
\\
&=  \sup_{x\in \Delta_n}  
\|  \alpha_x \overline{a(\varphi(x))}   \alpha_{\varphi(x)} \overline{a(\varphi^2(x))}  \cdots \alpha_{\varphi^{n-1}(x)} \overline{a(\varphi^{n} (x))} \|_{\B(D)}  
\\
& =  \|\overline{\alpha}T_\p(\overline{a})\cdot {\dots} \cdot T_\p^{n-1}\Big( \overline{\alpha}T_\p(\overline{a})\Big)\|_{C(X,\B(D))}.
\end{align*}
\end{proof}
The theory developed in previous sections give formulas for $r(\overline{\alpha}T_\p(\overline{a}) T_\p)=e^ {\lambda(\overline{\alpha}\cdot \overline{a}\circ \varphi,\varphi)}$,
which in  view of Proposition \ref{prop:spectral_radii_arbitrary_to_inner}  are also formulas for  $r(a\alpha)$. For the sake of completeness we state them explicitly:

\begin{thm}
\label{thm:varp-principle} 
 Let   $\alpha : A\to A$ be a contractive  endomorphism of $A=C(X,D)$ generating a partial dynamical system $(X,\p)$ and let $a\in A$. 
Let $\Omega\subseteq \Delta_\infty$ be any set  such that $\mu(\Omega)>0$ for every $\mu \in {\rm Erg} (\Delta_\infty,\p)$. Then
$$
\ln r(a\alpha)=\max_{\mu \in {\rm Erg} (\Delta_\infty,\p)}\, \lambda_\mu(\overline{\alpha}\cdot \overline{a}\circ \varphi,\varphi)=\max_{(x,v) \in \Omega\times S}\lambda_x(C_{(\overline{a}\circ \varphi)^*\overline{\alpha}^*,\p}^b,v) 
$$
 where $\overline{\alpha}\cdot \overline{a}\circ \varphi$ denotes the function $\Delta \ni x \to \alpha_x \overline{a}(\p(x))\in \B(D)$, $(\overline{a}\circ \varphi)^*\overline{\alpha}^*$ denotes the function
 $\Delta \ni x \to \overline{a(\varphi (x))}^* \alpha_x^*\in \B(D^*)$ and $S$ is the unit sphere in $D^*$.
 
\end{thm}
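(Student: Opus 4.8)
The plan is to reduce the statement entirely to the variational principles of Section~\ref{varp-gen-delta} by invoking Proposition~\ref{prop:spectral_radii_arbitrary_to_inner}. Setting $F:=D$, I would introduce the single $\B(D)$-valued function $a':=\overline{\alpha}\cdot\overline{a}\circ\varphi$ on $\Delta$, i.e.\ $a'(x):=\alpha_x\,\overline{a}(\varphi(x))$, which absorbs the whole twist $\{\alpha_x\}_{x\in\Delta}$ into the weight. Proposition~\ref{prop:spectral_radii_arbitrary_to_inner} already gives $\ln r(a\alpha)=\lambda(a',\varphi)$, so the entire theorem becomes a computation of the spectral exponent $\lambda(a',\varphi)$ of one cocycle with values in $\B(D)$, and the two asserted maxima are nothing but the two expressions furnished by Corollary~\ref{cor:variational_principle_Lyapunov} and Theorem~\ref{thm:variational_principle_Lyapunov} for that exponent.

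The only hypothesis of those results that must be verified is that the triple $(X,\varphi,a')$ admits a continuous linear extension. Here I would use that $a'$ is continuous and bounded: as recorded in the paragraph preceding Proposition~\ref{prop:spectral_radii_arbitrary_to_inner}, the field $\{\alpha_x\}_{x\in\Delta}$ is continuous as an element $\overline{\alpha}\in C(X,\B(D))$, the map $\overline{a}\circ\varphi$ is continuous on the clopen set $\Delta$ since $\overline{a}\in C(X,\B(D))$ and $\varphi:\Delta\to X$ is continuous, and their product $\overline{\alpha}\cdot\overline{a}\circ\varphi$ therefore lies in $C(X,\B(D))$; restricted to $\Delta$ this gives $a'\in C(\Delta,\B(D))$, which is bounded as $\Delta$ is compact. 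Consequently $(X,\varphi,a')$ admits a continuous linear extension by the remark following Definition~\ref{defn:linear_extension}, and the measure exponents $\lambda_\mu(a',\varphi)$ of Lemma~\ref{prop:Lyapunov_from_measures} are well defined. This step is exactly where unitality of $D$ and contractivity of $\alpha$ enter, through the isometric embedding $D\hookrightarrow\B(D)$ and the bound $\|\alpha_x\|\le1$.

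With the extension secured I would conclude directly. Corollary~\ref{cor:variational_principle_Lyapunov} applied to $a'$ yields $\lambda(a',\varphi)=\max_{\mu\in{\rm Erg}(\Delta_\infty,\p)}\lambda_\mu(a',\varphi)$, which together with $\ln r(a\alpha)=\lambda(a',\varphi)$ is the first equality once one reads $\lambda_\mu(a',\varphi)=\lambda_\mu(\overline{\alpha}\cdot\overline{a}\circ\varphi,\varphi)$. Theorem~\ref{thm:variational_principle_Lyapunov}, applied with the given set $\Omega$ and with $S$ the unit sphere of $F^*=D^*$, gives $\lambda(a',\varphi)=\max_{(x,v)\in\Omega\times S}\lambda_x(C_{a'^{*},\p}^{b},v)$; identifying the adjoint by the order-reversing rule, $a'(x)^{*}=(\alpha_x\overline{a}(\varphi(x)))^{*}=\overline{a(\varphi(x))}^{*}\,\alpha_x^{*}$, so that $a'^{*}=(\overline{a}\circ\varphi)^{*}\overline{\alpha}^{*}$ and hence $C_{a'^{*},\p}^{b}=C_{(\overline{a}\circ\varphi)^{*}\overline{\alpha}^{*},\p}^{b}$, produces the second equality verbatim. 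I do not anticipate any serious obstacle: the conceptual weight has been shifted entirely onto Proposition~\ref{prop:spectral_radii_arbitrary_to_inner} and the machinery of Section~\ref{varp-gen-delta}, and the only care needed here is the continuity of $a'$ on $\Delta$ together with the correct bookkeeping of the adjoints and of the sphere $S\subseteq D^{*}$.
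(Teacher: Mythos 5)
Your proposal is correct and follows essentially the same route as the paper: the paper's proof likewise invokes Proposition~\ref{prop:spectral_radii_arbitrary_to_inner} to identify $\ln r(a\alpha)$ with the spectral exponent of $\overline{\alpha}\cdot\overline{a}\circ\varphi$ and then cites Theorem~\ref{spectral radius main theorem} with trivial twist, explicitly noting as an alternative exactly your direct application of Corollary~\ref{cor:variational_principle_Lyapunov} and Theorem~\ref{thm:variational_principle_Lyapunov}. Your added verification that $a'=\overline{\alpha}\cdot\overline{a}\circ\varphi$ is continuous (hence admits a continuous linear extension) and your bookkeeping of the adjoint $a'(x)^*=\overline{a(\varphi(x))}^*\alpha_x^*$ are correct details that the paper leaves implicit.
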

\begin{proof} In  view of   Proposition \ref{prop:spectral_radii_arbitrary_to_inner}, the 
assertion follows from Theorem \ref{spectral radius main theorem} applied to 
a weighted endomorphism with a weight equal to 
$\overline{\alpha}\cdot \overline{a}\circ \varphi$ and trivial field of endomorphisms of $\B(D)$. 
Alternatively, one can apply Corollary~\ref{cor:variational_principle_Lyapunov},  
Theorem~\ref{thm:variational_principle_Lyapunov} and Remark \ref{relation-erg-Terg}.
\end{proof}

\begin{rem}
Variational principles obtained in  Theorem~\ref{thm:varp-principle} exploit the  operator algebra $\B(D)$ and the dual space $D^{*}$ of a Banach algebra $D$. As a rule these spaces are rather bulky. For example, if $D = \B (F)$ for some Banach space $F$ then $\B(D)=\B(\B(F))$ and $D^*=\B(F)^*$ which are huge in comparison to $\B(F)$ and $F^*$,  respectively. 
Thus Theorems~\ref{thm:varp-principle-TS}, \ref{spectral radius main theorem} are much more efficient, but we can apply them only under the assumption that the associated field 
  $\{\alpha_x\}_{x\in \Delta}$ consists of  isometric inner endomorphisms of $\B(F)$.
\end{rem}

\section{Concluding remarks and potential applications}\label{sec:final}
The variational principles established in this paper have
a flavor of  Dinaburg-Goodman variational principle 
linking the topological entropy with measure theoretical entropies,
and more generally Ruelle-Walters variational principle \eqref{varp-topolo-pressure}
expressing the topological pressure is terms of free energy. 
These celebrated results have a deep theoretical meaning. 
We have expressed spectral radii of a wide class of operators 
in terms of measure Lyapunov exponents for ergodic measures for a given topological dynamical system.
Thus our formulas establish
strong explicit link between theory of operators and dynamical systems.
They are far reaching generalizations,  and in the noncommutative case improvements, of a number of previous results of this type, see
\cite{Kitover}, \cite{Lebedev79}, \cite{LatSt1}, \cite{LatSt}, \cite{Anton_Lebed}, \cite{ChLat}. 
 One of the immediate interesting consequences is
the improved Gelfand's formula which states that the spectral exponent of every operator is 
a Lyapunov exponent in some direction (Corollary \ref{cor-Gelf}).
We believe that the results of the present paper can be
used to prove a very general continuous Multiplicative Ergodic Theorem for
arbitrary (not necessarily quasi-compact) cocycles taking values in   operators on a reflexive Banach space, cf. Remark \ref{rem:MET}.

Calculation of ergodic measures  is one of the most important
problems in ergodic theory. As a rule, for a non-trivial irreversible dynamical system $(X,\varphi)$, 
invariant measures 
${\rm Inv}(X,\varphi)$   form a Poulsen symplex,
which means that the set of ergodic measures ${\rm Erg}(X,\varphi)$ is large.
Therefore in general one can not expect our formulas to give immediate
and explicit answers. Nevertheless, the presented 
theory gives a clear procedure how to approach the problem  
and significantly simplifies general calculations 
(one of the main improvements is the change from $\lim_{n\to \infty}\sup_{x\in X}$ to
$\max_{x\in X} \lim_{n\to \infty}$, cf. Corollary \ref{cor:ergodic_sums})
We illustrate this on some examples. We start with  operators associated to weighted endomorphisms which are not 
far from being compact. Symptomatically, it seems that the only `explicit' results about spectra of weighted composition operators on 
 $C(X, \B(F))$ concern the compact case, see \cite{Jamison}, \cite{Kamowitz}. We generalize formulas for 
spectral radii appearing there:

\begin{prop}\label{prop:compact_spectral_radius}
Suppose that $aT\in \B(E)$, $a\in A:=C(X, \B(F))$, are   abstract weighted shift operators such that the  associated endomorphism $\alpha:A\to A$  consists of isometric inner endomorphisms. 
So that $\alpha$ is of the form 
\eqref{eq:desired_endomorphism_form} for a partial map $\varphi:\Delta \to X$ and a field of isometries   $\{T_x\}_{x\in \Delta}\subseteq \B(F)$. Suppose also that $a\in A$ is such that
\begin{enumerate}
\item\label{enu:compact1}  $
F\subseteq \Delta^a:=\{x\in X: a(x)\neq 0\} \text{ is compact }\Longrightarrow |\varphi(F)| <\infty;
$
\item\label{enu:compact2} $a(x)\in \K(F)$ is compact for every $x$ in the essential domain $\Delta^a_{\infty}$  for $\varphi:\Delta^a\to X$
\end{enumerate}
 (these are necessary, but not sufficient, conditions for $a\alpha:A\to A$ to be compact, cf.  \cite{Jamison}).
Then putting $W(x):=a(x)T_x$, $x\in \Delta$, we have
$$
r(aT)=r(a\alpha)=\max_{x\in \Delta^a_{\infty}, \atop
\varphi^n(x)=x, n \in \N} \sqrt[n]{\|W(x)W(\varphi(x))...W(\varphi^{n-1}(x))\|}.
$$
The norm of each of the operators $W(x)W(\varphi(x))...W(\varphi^{n-1}(x))$, in the above formula,
realizes on an eigenvector.
\end{prop}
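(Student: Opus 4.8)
The plan is to regard $aT$ as the weight of a forward cocycle and to run it through the variational principle already established in this setting, after which the two compactness hypotheses should force the competing ergodic measures to live on periodic orbits. By Proposition~\ref{prop:coc-aT} one has $\ln r(a\alpha)=\lambda(aT,\varphi)$, and Corollary~\ref{cor:variational_principle_Lyapunov} combined with Remark~\ref{relation-erg-Terg}---which lets me shrink $\Delta_\infty$ to the essential domain $\Delta^a_\infty$ of $\varphi$ on $\Delta^a=\{x:a(x)\neq 0\}$, the weight being zero off $\Delta^a$---gives
\[
\ln r(a\alpha)=\max_{\mu\in{\rm Erg}(\Delta^a_\infty,\varphi)}\lambda_\mu(aT,\varphi).
\]
Everything then hinges on describing ${\rm Erg}(\Delta^a_\infty,\varphi)$ and on evaluating the measure exponent on each member.

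The first, and I expect principal, step is to deduce from condition~(\ref{enu:compact1}) that every $\mu\in{\rm Erg}(\Delta^a_\infty,\varphi)$ is equidistribution on a single finite cycle. Here $Y:=\supp\mu$ is a compact subset of $\Delta^a_\infty\subseteq\Delta^a$, so $\varphi(Y)$ is finite by~(\ref{enu:compact1}). Since $\mu$ is $\varphi$-invariant we have $\varphi_*\mu=\mu$, and for a continuous map acting on a compactly supported measure $\supp(\varphi_*\mu)=\varphi(\supp\mu)$; hence $Y=\varphi(Y)$ is finite, and an ergodic measure with finite support is the uniform measure on one orbit $x,\varphi(x),\dots,\varphi^{p-1}(x)$ with $\varphi^{p}(x)=x$. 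The delicate point I foresee is the bookkeeping for supports of invariant measures under a partial map, for which I would lean on Lemma~\ref{lem1.0} to keep $\supp\mu$ inside the essential domain.

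It then remains to compute the exponent on such a cycle and to use condition~(\ref{enu:compact2}). Setting $M(x,p):=W(x)W(\varphi(x))\cdots W(\varphi^{p-1}(x))$, periodicity gives $C_{aT,\varphi}^{f}(x,kp)=M(x,p)^{k}$, so Lemma~\ref{prop:Lyapunov_from_measures} together with Gelfand's formula yields
\[
\lambda_\mu(aT,\varphi)=\lim_{k\to\infty}\frac{1}{kp}\ln\bigl\|M(x,p)^{k}\bigr\|=\frac{1}{p}\ln r\bigl(M(x,p)\bigr).
\]
By condition~(\ref{enu:compact2}) each factor $a(\varphi^{i}(x))$ is compact, so $M(x,p)$ is a compact operator; by Riesz--Schauder theory its spectral radius, when positive, is the modulus of a peripheral eigenvalue, giving $v\neq 0$ with $M(x,p)v=\lambda v$ and $|\lambda|=r(M(x,p))$. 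This is the asserted realization on an eigenvector, and along the line $\C v$ one has $\|M(x,p)^{k}v\|=r(M(x,p))^{k}\|v\|$, so the growth rate defining $\lambda_\mu$ is attained exactly in this direction. Taking the maximum over all cycles assembles the displays into $r(a\alpha)=\max_{\varphi^{n}(x)=x,\,x\in\Delta^a_\infty} r(M(x,n))^{1/n}$, and the final, possibly subtle, task is to match this spectral-radius expression with the norm written in the statement: one must verify, using compactness in an essential way, that the operator norm of the relevant periodic product is attained on the peripheral eigenvector $v$, so that $\|M(x,n)\|=r(M(x,n))$ there. I regard this reconciliation of $\|\cdot\|$ with $r(\cdot)$, alongside the measure-theoretic collapse in the second paragraph, as the two places where the compactness hypotheses genuinely do the work.
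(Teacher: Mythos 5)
Your route coincides with the paper's in all essentials: reduce to the Lyapunov variational principle over $\Delta^a_\infty$ (the paper cites Theorem \ref{spectral radius main theorem} directly; you assemble the same statement from Proposition \ref{prop:coc-aT}, Corollary \ref{cor:variational_principle_Lyapunov} and Remark \ref{relation-erg-Terg}), use condition (\ref{enu:compact1}) to force every ergodic measure onto a periodic orbit, and evaluate the measure exponent on a cycle via compactness of the periodic product. Your argument for the periodic-orbit reduction is genuinely different and arguably cleaner: you apply condition (\ref{enu:compact1}) directly to the compact set $\supp\mu$ and use $\varphi(\supp\mu)=\supp(\varphi_*\mu)=\supp\mu$ to conclude finiteness, whereas the paper goes through Jamison's lemma (each component of $\Delta^a$ sits in an open set on which $\varphi$ is constant) and the non-wandering set. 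Both are valid; yours avoids the external citation.

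The step you flag but do not carry out --- reconciling $r(M(x,n))^{1/n}$ with $\|M(x,n)\|^{1/n}$ --- is a genuine gap in your write-up, and in fact it cannot be closed: the identity $\|M(x,n)\|=r(M(x,n))$ fails for compact operators in general. Take $X=\{x\}$, $\varphi=\mathrm{id}$, $F=\C^2$, $T_x=1$ and $a(x)=\left(\begin{smallmatrix}0&1\\0&0\end{smallmatrix}\right)$: both hypotheses of the proposition hold, $r(a\alpha)=r(a(x))=0$, yet $\max_n\|a(x)^n\|^{1/n}=\|a(x)\|=1$, so the displayed formula is false as stated. What your computation actually proves --- correctly --- is $r(a\alpha)=\max\, r\bigl(W(x)\cdots W(\varphi^{n-1}(x))\bigr)^{1/n}$, equivalently $\max\lim_k\|(W(x)\cdots W(\varphi^{n-1}(x)))^k\|^{1/(kn)}$, with the peripheral eigenvector supplied by Riesz--Schauder when the spectral radius is positive. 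The paper's own proof breaks at exactly the point you isolated: it asserts that, $M$ being compact, there is a norm-one eigenvector $v$ with $\|M\|=\|Mv\|$, which the same nilpotent example refutes. So your instinct about where the compactness hypotheses must ``genuinely do the work'' was exactly right --- that is the step at which the published argument, and the statement in its literal $\|\cdot\|$ form, fail.
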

\begin{proof}
By \cite[Lemma]{Jamison} condition \eqref{enu:compact1} holds if and only if every connected component  of $\Delta^a$
is contained in an open set $V\subseteq \Delta$ such that $\varphi|_V$ is constant. 
This readily implies that 
the set of non-wandering points for $\varphi:\Delta^a\to X$ consists only of periodic points.
Thus every ergodic measure  is supported
on a periodic orbit. That is, 
${\rm Erg} (\Delta_\infty^a,\p)=\{\frac{1}{n}\sum_{k=0}^{n-1}\delta_{\varphi^k(x)}: \varphi^n(x)=x\in \Delta_\infty^a, n \in \N\}$. 
Hence by Theorem \ref{spectral radius main theorem} we get
$$
r(aT)=r(a\alpha)=\max_{x\in \Delta^a_{\infty}
, \atop
\varphi^n(x)=x, n \in \N}\lim_{k\to \infty} \sqrt[k]{\|W(x)W(\varphi(x))...W(\varphi^{k-1}(x))\|}.
$$
Using the submultiplicativity of operator norm, for every  $k\in \N$ and $x\in \Delta^a_{\infty}$ with $\varphi^n(x)=x$  we get $\|W(x)W(\varphi(x))...W(\varphi^{nk-1}(x))\|^{\frac{1}{kn}}\leq\|W(x)W(\varphi(x))...W(\varphi^{n-1}(x))\|^{\frac{1}{n}}$.
By  \eqref{enu:compact2} this inequality is in fact 
an equality. Indeed,  since $W(x)...W(\varphi^{n-1}(x))$
is compact,  there is a norm one eigenvector $v\in F$ 
for $W(x)...W(\varphi^{n-1}(x))$ such that
\begin{align*}
\|W(x)W(\varphi(x))...W(\varphi^{n-1}(x))\|^{\frac{1}{n}}
&=\|W(x)W(\varphi(x))...W(\varphi^{n-1}(x)) v\|^{\frac{1}{n}}
\\
&=\|W(x)W(\varphi(x))...W(\varphi^{nk-1}(x))v\|^{\frac{1}{kn}}.
\end{align*}
This reduces the above formula for $
r(aT)=r(a\alpha)$ to the one stated in the assertion.
\end{proof}
The above proposition and the following example indicate that 
when the dynamics of $(\Delta_\infty^a,\p)$ is not complicated or not
far from being reversible, then there is a chance for more  explicit formulas
for the spectral radius.
\begin{ex} Let us consider generalisations of compression of unitaries studied in \cite{kwa-logist}.
Namely, we put $E=L^p(\R_+,F)$, where $p\in [1,\infty]$, $F$ is a Banach space and $\R_+=[0,\infty)$ is equipped with the Lebesgue measure.
We fix a continuous monotonically increasing function $\gamma:\R_+\to \R_+$  satisfying the condition 
$
\gamma(t+1)=\gamma(t)+1$, $t \in \R_+$, and use it to define an operator
$T_p\in \B(E)$ by the formula  
$$
(T_pf)(t):=|\gamma'(t)|^{\frac{1}{p}}f(\gamma(t)), \qquad f\in E=L^p(\R_+,F),\,\, t\in \R^+;
$$
 $\gamma'(t)$ exists almost everywhere because  $\gamma$ is monotone and we put $\frac{1}{\infty} =0$
when $p=\infty$.
Note that $T_p$ is not invertible, unless $g(0)=0$. It is adjoint, in the sense of Definition \ref{definicja czesciowej izometrii},
to an isometry given by $(S_pf)(t):= |(\gamma^{-1})'(t)|^{\frac{1}{p}}f(\gamma^{-1}(t))$,  for $t \in [\gamma(0),\infty)$
and $(S_pf)(t):=0$ for $t\in [0,\gamma(0))$. 
We let $A$ to be the algebra of operators of multiplication by periodic functions in $C(\R_+, \B(F))$ with period $1$. 
Then $A\cong C(S^1,\B(F))$ and one readily sees that $aT_p$, $a\in A$, are abstract weighted shifts associated 
with the authomorphism $\alpha:A\to A$ given by composition with the homeomorphism
$\varphi:S^1\to S^1$ where $\varphi(e^{2\pi i t})=e^{2\pi i \gamma(t)}$,  $t\in [0,1)$.
We recall that the \emph{rotation number} for $\varphi$ is defined as the fractional part
$$
\tau(\varphi):= 
\text{frac} \left(
\, \lim_{n\to \infty}\frac{\gamma(t)}{n} \right) 
\in [0,1)
$$
where $\lim_{n\to \infty}\frac{\gamma(t)}{n}$  does not depend on  $t\in \R$, and the number 
$\tau(\varphi)$ depends only on $\varphi$. 
We have the following cases, see for instance \cite[Section 7]{Brin}:
\begin{enumerate}
\item If
$\tau(\varphi)=\frac{m}{n}$, where  $\frac{m}{n}$ is an irreducible fraction, then  $\Omega(\varphi)=\{x\in S^1: \varphi^n(x)=x, \, n\in \N\}$
and therefore ${\rm Erg}(S^1,\varphi)=\{\frac{1}{n}\sum_{k=0}^{n-1}\delta_{\varphi^k(x)}: \varphi^n(x)=x\in S^1,  n \in \N\}$.
In particular, if we form a set $\Omega$ by choosing a single  point from every periodic orbit then
$
r(aT_p)=r(a\alpha)=\max_{x\in \Omega} \lim_{k\to \infty} \sqrt[kn]{\|\left(a(x)a(\varphi(x))...a(\varphi^{n-1}(x))\right)^k\|}.
$
And if $a$ attains values in compact operators, then similarly as in the proof of Proposition \ref{prop:compact_spectral_radius}
we get
$$
r(aT_p)=r(a\alpha)=\max_{x\in \Omega}  \sqrt[n]{\|a(x)a(\varphi(x))...a(\varphi^{n-1}(x))\|}.
$$
\item If  
$\tau(\varphi)\not \in \mathbb{Q}$ is irrational, then  $\varphi$ is uniquely ergodic, that is  ${\rm Erg}(S^1,\varphi)=\{\mu\}$, and either 
$\Omega(\varphi)=S^1$ in which case $\mu$ is equivalent to the Lebesgue measure on $S^1$, or $\Omega(\varphi)$ is homeomorphic to a Cantor 
set and then $\mu$ is equivalent to the singular measure with Cantor distribution. 
In the first case with  we have $\lambda_\mu({a,\p})=\lambda_x(C_{a,\p}^f)$ for almost  any $x\in S^1$. Thus we may choose a point $x\in S^1$ randomly and with probability one we get
$$
r(aT_p)=r(a\alpha)=\lim_{n\to \infty} \sqrt[n]{\|a(x)a(\varphi(x))...a(\varphi^{n-1}(x))\|}.
$$
In the second case, one needs to first detect the Cantor set $\Omega(\varphi)$ and then check
the above limits in points of  $\Omega(\varphi)$. Fortunately, by Denjoy theorem \cite[Theorem 7.2.1]{Brin} this singular second case can not occur if 
$\gamma$ (or equivalently $\varphi$) is of $C^2$-class  (in fact it suffices that $\gamma'$ is continuous and  of bounded variation).
\end{enumerate}
\end{ex}

Non-trivial complicated irreversible dynamics come from expanding maps. 
If for a given $a$ the restricted  map
 $\varphi:\Delta^a_{\infty}\to \Delta^a_{\infty}$ is expanding,  then so is the map 
extended $\tilde{\varphi}:\TDelta_{\infty}\to \TDelta_{\infty}$ in the linear extension. Then the dynamics and ergodic measures
for both of the systems $(\Delta^a_{\infty}, \varphi)$ and $(\tilde{\varphi},\TDelta_{\infty})$ 
can be analyzed  using symbolic dynamics -  fixing Markov partitions we may reduce
the analysis to topological Markov chains, cf. \cite{Bowen}, \cite{Ruelle0}, \cite{Ruelle89}. 
The \emph{topological Markov chain} 
with the set of states $\{1,..,n\}$ and transition matrix $\mathbb{A}=[t_{i,j}]_{i,j=1}^n$, $t_{i,j}\in \{0,1\}$, 
 is the map 
$\sigma_{\mathbb{A}}:\Sigma_{\mathbb{A}}\to \Sigma_{\mathbb{A}}$ where
$
\Sigma_{\mathbb{A}}:=\{(\xi_1,\xi_2,...)\in \{1,...,n\}^\N: t_{\xi_i,\xi_{i+1}}=1\}$ 
and
$\sigma_{\mathbb{A}} (\xi_1,\xi_2,\xi_3,...):=(\xi_2,\xi_3,...).
$
The elements  of ${\rm Inv}(\Sigma_{\mathbb{A}},\sigma_{\mathbb{A}})$
are in one-to-one correspondence with  collections of nonnegative numbers $\{p_{\xi_1,\xi_2,...\xi_k}: (\xi_1,\xi_2,...\xi_k)\in\{1,...,n\}^{k}, k\in\Z\}$ such that $\sum_{i=1}^n p_i=1$,  
 $$
p_{\xi_1,\xi_2,...\xi_{k+1}}=\sum_{i=1}^n p_{\xi_1,\xi_2,...\xi_k,i}=\sum_{i=1}^n p_{i,\xi_2,\xi_3,...\xi_{k+1}}
$$
 and $p_{\xi_1,\xi_2,...\xi_{k}}>0$  only if $(\xi_1,\xi_2,...\xi_{k})$ is a path in the directed graph given by $\mathbb{A}$,
cf., for instance, \cite{Walters}.
Using this, and perhaps employing  computer calculations, one can obtain good lower bounds for the corresponding spectral radius,
but the thorough discussion of these issues is beyond the scope of the present paper. 
In the simplest situation, when  $A=C(\Sigma_{\mathbb{A}})$,  $\alpha(a):=a\circ \sigma_{\mathbb{A}}$, and $a\in A$ depends only 
on $N$ first coordinates: $a(\xi_1,\xi_2,...)= a(\xi_1,\xi_2,...,\xi_N)$ for all $(\xi_1,\xi_2,...)\in \Sigma_{\mathbb{A}}$, then
$$
r(a\alpha)=\max_{\{p_{\xi_1,\xi_2,...\xi_{N}}\}} 
\prod_{(\xi_1,\xi_2,...,\xi_{N})\in \Sigma_{\mathbb{A}}^N} |a( \xi_1,\xi_2,...\xi_{N})|^{p_{\xi_1,\xi_2,...,\xi_{N}}}
$$
where $\Sigma_{\mathbb{A}}^N:=\{(\xi_1,\xi_2,...,\xi_{N}): (\xi_1,\xi_2,...)\in \Sigma_{\mathbb{A}} \}$ is 
a finite set and the maximum is taken over all positive numbers $\{p_\xi\}_{\xi \in \Sigma_{\mathbb{A}}^N}$ 
such that 
$\sum_{\xi \in \Sigma_{\mathbb{A}}^N} p_{\xi}=1$.

\end{document}